\documentclass{amsart}

\usepackage[colorlinks]{hyperref}
\usepackage{cancel}
\usepackage{soul}
\usepackage[normalem]{ulem}
\usepackage[all]{xy}

\usepackage{amssymb,amsmath,graphicx}
\usepackage{newtxmath}
\usepackage{mathtools}

\usepackage{xcolor}
\usepackage{enumerate}

\newtoks\prt

\newtheorem{thm}{Theorem}[section]

\newtheorem{ques}[thm]{Question}
\newtheorem{lemma}[thm]{Lemma}
\newtheorem{prop}[thm]{Proposition}
\newtheorem{cor}[thm]{Corollary}

\newtheorem{example}[thm]{Example}
\newtheorem{fact}[thm]{Fact}

\newtheorem*{claim*}{Claim}

\theoremstyle{definition}

\newtheorem{remark}[thm]{Remark}

\newtheorem{definition}[thm]{Definition}

\def\eqn#1$$#2$${\begin{equation}\label#1#2\end{equation}}

\def\1{\boldsymbol{1}}
\def\A{\mathcal A}
\def\B{\mathcal B}

\def\F{\mathcal F}

\def\W{\mathcal W}
\def\I{\mathcal I}

\def\H{\mathcal H}
\def\M{\mathcal M}

\def\P{\mathcal P}

\def\ce{\mathbb C}

\def\bnd{\operatorname{bnd}}
\def\bnd{\operatorname{bnd}}
\def\Borel{\operatorname{Borel}}
\def\co{\operatorname{conv}}
\def\ep{\varepsilon}

\def\en{\mathbb N}
\def\er{\mathbb R}
\def\qe{\mathbb Q}
\def\ef{\mathbb F}

\def\TT{S_{\ef}}

\def\wt{\widetilde}
\def\Im{\operatorname{Im}}
\def\dist{\operatorname{dist}}

\def\ov{\overline}

\def \hom {\operatorname{hom}}

\def \ahom {\operatorname{ahom}}
\def \vhom {\operatorname{{hom}}}

\def \Int {\operatorname{Int}}
\def \ext {\operatorname{ext}}

\def\span{\operatorname{span}}

\def\Rng{\operatorname{Rng}}

\def\wh{\widehat}
\def \reg {\partial _{\kern1pt\text{reg}}}

\def\inxe{X\wh{\otimes}_\ep E}

\def\di{\mbox{\,\rm d}}

\newcommand{\norm}[1]{\left\|#1\right\|}

\renewcommand{\Re}{\operatorname{Re}}

\newcommand{\abs}[1]{\left|#1\right|}
\newcommand{\setsep}{;\,}

\numberwithin{equation}{section}

\title {Operators on  injective tensor products of $L_1$-preduals}
\author{\v St\v{e}p\'{a}n Ond\v{r}ej and  Ji\v r\'\i\ Spurn\'y}

\address{Štěpán Ondřej\\
Charles University\\
Faculty of Mathematics and Physics\\
Department of Mathematical Analysis \\
Sokolovsk\'{a} 83, 186 \ 75\\Praha 8, Czech Republic}
\email{stepan.ondrej844@student.cuni.cz}

\address{Ji\v r\'\i\ Spurn\'y\\
Charles University\\
Faculty of Mathematics and Physics\\
Department of Mathematical Analysis \\
Sokolovsk\'{a} 83, 186 \ 75\\Praha 8, Czech Republic}
\email{spurny@karlin.mff.cuni.cz}

\keywords{$L_1$-preduals, simplex, injective tensor product, vector measure}

\subjclass[2020]{46A32,46E40,46G10}

\begin{document}

\begin{abstract}
Let $X$ be an $L_1$-predual and $E,F$ be Banach spaces. We use the fact  that an unconditionally converging operator $T\colon \inxe\to F$ from the injective tensor product of $X$ and $E$ to $F$ is strongly bounded and extend $T$ to an operator $S\colon C(B_{X^*},E)\to F$ with the preservation of properties of $T$. This procedure provides a unified approach for proving properties of $\inxe$ based on the properties of $E$.
\end{abstract}

\maketitle

\tableofcontents
\clearpage

\section{Introduction}

If $E,F$ are Banach spaces and $K$ a compact Hausdorff topological space, it is well known that a bounded linear operator $T\colon C(K,E)\to F$ from the space of all continuous $E$-valued functions on $K$ to $F$ admits a representing measure $G\colon \Borel(K)\to L(E,F^{**})$ with finite semivariation $\norm{G}$, see Section~\ref{ssc:repre}. We refer the reader to \cite{batt-berg}, \cite{dobrakov}, and \cite{batt-mathan} for a detailed study of this representation. A recent treatment of this representation can be found in \cite{roth-kniha}.

The representing measure $G$ serves as a tool for proving many results on $C(K,E)$-spaces. The main feature of these proofs is the fact that an unconditionally converging (see Section~\ref{sec:oper}) operator $T\colon C(K,E)\to F$ is strongly bounded, i.e., the semivariation $\norm{G}$ of the measure $G$ is continuous at $\emptyset$, see \cite{dobrakov}. Then it also holds that $G$ has values in $L(E,F)$. This leads to a question what is the relation between properties of $T$ and $G$. This problem was extensively studied by many authors, see e.g. \cite{saab-mpcps}, \cite{abbott-atall}, \cite{abbott-lewis}, \cite{bomball-porras}, \cite{bombal-extracta}, \cite{bombal-pams}, \cite{bombal-cembranos}, \cite{bombal-salinas}, \cite{nowak}, \cite{ghenciu-popescu}, \cite{ghenciu-lewis}, \cite{lewis-brooks}, \cite{ghenciu-bullpolish}, \cite{nowak-open}. 

There is a long series of papers devoted to the question what properties of  a Banach space $E$ are transferred to the space $C(K,E)$. We mention a result in \cite{bo-ce-trans} stating that if $E$ is a Banach space with the separable dual, $C(K,E)$ has the Dieudonné property. This result was improved in \cite{kalton-saab-saab} where they showed that $C(K,E)$ possesses the Dieudonné property whenever $E$ does not contain a copy of $\ell_1$ (isomorphically).
Further, \cite{cembr-kalton-saab-saab-mathann} proved a result stating that $C(K,E)$ has the Pelczynski property (V) provided $E$ does not contain a  copy of $\ell_1$ and has the property (u). A different proof was provided in \cite{ulger} and \cite{emmanuele}. The paper \cite{randrian} gives the result that $C(K,E)$ has the Pelczynski property (V) whenever $E$ is a separable space with the property (V). It was shown in \cite{kalton-saab-saab-bsm} that $C(K,E)$ has the property semi-(V) provided $E$ has the property (u). An interested reader can consult \cite{saabs-stability} for a survey of related results. A number of results on $C(K,E)$-spaces and strongly bounded operators can be found in \cite{ghenciu-lewis}.

The space $C(K,E)$ can be considered as the space of all affine continuous $E$-valued functions defined on the convex (weak$^*$) compact set $M^1(K)$ of Borel probability regular measures on $K$. Since this set is a Bauer simplex, one might consider the problem of extending results on $C(K,E)$-spaces to the context of spaces of affine continuous $E$-valued functions defined on a compact convex set $K$ (denoted as $A(K,E)$-spaces). The first task is to establish the existence of a representing measure $G$ for an operator $T\colon A(K,E)\to F$. This was done in \cite{saab-saab}, where such a measure is constructed for $K$ being a simplex. Moreover, the representing measure is constructed in such a way that it is boundary. 
Thus we may ask whether analogous results for operators on $C(K,E)$-spaces remain valid for $A(K,E)$-spaces with $K$ being a simplex. Since $A(K,E)$ can be identified with the injective tensor product $A(K)\wh{\otimes}_\ep E$ of $A(K)$ with $E$, we may try to extend the theory even for these spaces. 
For a simplex $K$, $A(K)$ is an example of an $L_1$-predual, see \cite[Proposition 3.23]{fonf} and \cite[Chapter 7, \S 23, Theorem 6]{lacey}.
A close relation between simplices and $L_1$-preduals shows a paper \cite{lusky-comp} proving that a real $L_1$-predual is $1$-complemented in $A(K,\er)$ for a suitable simplex $K$.
A development of a representation theory for operators on injective tensor products $\inxe$, where $X$ is an $L_1$-predual, can be found in \cite{saabs-rocky-crucial}.
In particular, the following question is crucial: 

\emph{Is the representing measure for an unconditionally converging operator on $\inxe$ strongly bounded?}

The paper \cite{saab-saab-cracad} provides in Proposition~2 a positive answer, a proof appears in \cite[Proposition 1]{saabs-rocky-crucial} and \cite[Theorem 3]{saabs-illinois}.
(See also \cite{emmanuele-addendum} and \cite{emanuele-pams}  for results on injective tensor products.)

The aim of our paper is to further develop ideas from \cite{saabs-rocky-crucial}.
We extend an operator $T\colon \inxe\to F$ to an operator $S\colon C(B_{X^*},E)\to F$ with preservation of the properties of $T$. Then it is possible to deduce results for spaces $\inxe$ with the help of already known theorems for $C(K,E)$-spaces.

The paper is organized as follows. We present the necessary definitions and notions in Section~\ref{s:prel}. Then we recall the notion of the representing measure for an operator on $\inxe$ and present the known result that an unconditionally converging operator on $\inxe$ is strongly bounded. The study of strongly bounded operators is contained in Section~\ref{s:strongly}. Then we show that a strongly bounded operator on $\inxe$ admits an extension to $C(B_{X^*},E)$ with preservation of properties (Section~\ref{s:extens}). Then we provide a survey of results accessible by our extension method for the $\inxe$-spaces. The last Section~\ref{s:scatt} is devoted to a particular case of $L_1$-preduals $X$ with $\ext B_{X^*}$ scattered.

\section{Preliminaries}
\label{s:prel}
We will deal with  vector spaces over real or complex numbers, we denote the respective field by $\ef$. Our topological spaces are Hausdorff. To avoid technicalities, the compact spaces considered are nonempty and vector spaces are usually nontrivial.  If $E$ is a Banach space, $E^*$ denotes its dual, $B_E$ is the  closed unit ball of $E$, $U_E$ is the open unit ball of $E$, and $S_E$ is the unit sphere of $E$. In particular, $S_{\ef}$ is the unit sphere of the field $\ef$, i.e., $S_{\ce}=\{\lambda\in\ce\setsep \abs{\lambda}=1\}$ or $S_{\er}=\{-1,1\}$. An operator between Banach spaces means a bounded linear mapping. For an operator $T\colon E\to F$, where $E,F$ are Banach spaces, we write $T^*\colon F^*\to E^*$ for the dual operator. The notation $L(E,F)$ is reserved for the space of operators from $E$ to $F$. If $E$ is a Banach space and $A\subset E$, then $A^\perp\subset E^*$ is the annihilator of $A$. The distance between sets $A$ and $B$ in a Banach space is denoted as $\dist(A,B)$. For a measure $\mu$ and a function $f$, we sometimes write $\mu(f)$ as the shortcut to $\int f\di\mu$. The characteristic function of a set $A$ is denoted by $\chi_A$. The restriction of a function $f$ to a set $A$ is denoted as $f|_A$. The convex hull of a set $A$ in a vector space is denoted as $\co A$. The closure of a set $A$ in a topological space is denoted as $\ov{A}$, its interior as $\Int A$.

\subsection{Compact convex sets}
\label{ssc:ccs}
If $K$ is a compact topological space, we denote by $C(K,\ef)=C(K)$ the Banach space of all $\ef$-valued continuous functions on $K$ equipped with the sup-norm. This is a closed subspace of the space $B(K)=B(K,\ef)$ of all bounded Borel $\ef$-valued functions on $K$ (this space is also considered with the sup-norm). The dual of $C(K)$ will be identified (by the Riesz representation theorem) with $M(K)$, the space of all $\ef$-valued Borel regular measures on $K$ equipped with the total variation norm and the respective weak$^*$ topology. 

Let $M^+(K)$ and $M^1(K)$ stand for the set of all positive and probability measures in $M(K)$, respectively.  If $B\subset K$ is a Borel, we write $M^1(B)$ for the set of all $\mu\in M^1(K)$ with $\mu(K\setminus B)=0$. Let $\ep_t$ stand for the Dirac measure at a point $t\in K$.

Let $K$ be a compact convex set in a Hausdorff locally convex topological vector space. We write $A(K)=A(K,\ef)$ for the space of all $\ef$-valued continuous affine functions on $K$. This space is a closed subspace of $C(K)$ and is equipped with the inherited sup-norm. Given a  probability measure $\mu\in M^1(K)$, we write $r(\mu)$ for the \emph{barycenter of $\mu$}, i.e., the unique point $t\in K$ satisfying $a(t)=\int_K a \di\mu$ for each affine continuous function $a$ on $K$ (see \cite[Propositions~I.2.1 and I.2.2]{alfsen} or \cite[Chapter 7, \S\,20]{lacey}). Conversely, for a point $t\in K$, we denote by $M_{t}(K)$ the set of all  probability measures on $K$ with barycenter $t$ (i.e., the set of all probabilities \emph{representing} $t$).

The usual Choquet dilation order $\prec$ on the set $M^1(K)$ of Borel probability measures on $K$ is defined as $\mu\prec \nu$ if and only if $\mu(f)\le \nu(f)$ for any real-valued convex continuous function $f$ on $K$.   A measure $\mu\in M^1(K)$ is said to be \emph{maximal} if it is maximal with respect to the dilation order $\prec$.
In case $K$ is metrizable, maximal probability measures are exactly the probabilities carried by the $G_\delta$ set $\ext K$ of extreme points of $K$ (see, e.g., \cite[p.\,35]{alfsen} or \cite[Corollary 3.62]{lmns}). (We recall that a \emph{$G_\delta$ subset} of a topological space is a countable intersection of open sets, an \emph{$F_\sigma$ set} is a countable union of closed sets.) By the Choquet representation theorem, for any $t\in K$ there exists a maximal representing measure $\mu\in M_t(K)$ (see \cite[p.\,192, Corollary]{lacey} or \cite[Theorem I.4.8]{alfsen}). A maximal measure is carried by any $F_\sigma$ set or any Baire set containing $\ext K$, see \cite[Theorem 3.79]{lmns}. (We recall that a set $A$ in  a compact topological space $K$ is \emph{Baire}, if it is contained in the $\sigma$-algebra generated by preimages of open sets in $\er$ under continuous functions from $K$ to $\er$.)
According to the Mokobodzki test, a measure $\mu\in M^+(K)$ is maximal if and only if $\mu(g^*)=\mu(g)$ for each continuous (respectively convex continuous) function $g\in C(K,\er)$. Here 
\[
g^*(t)=\inf\{a(t)\setsep a\ge g, a\in A(K,\er)\},\quad t\in K,
\]
is the \emph{upper envelope} of $g$. We refer the reader to \cite[Chapter 7, \S 20, Theorem 2]{lacey}
for the proof of the Mokobodzki test.

A measure $\mu\in M(K)$ is called \emph{boundary} if either $\mu= 0$ or the probability measure $\frac{\abs{\mu}}{\abs{\mu}(K)}$ is maximal (here $\abs{\mu}$ is the \emph{variation} of $\mu$). If $K$ is metrizable, the boundary measures
are exactly the measures carried by $\ext K$. We write $M_{\bnd}(K)$ for the set of all boundary measure on $K$. 

A compact convex set $K$ is termed a \emph{simplex} if a maximal probability measure representing $t\in K$ is uniquely determined for each $t\in K$.
We write $\delta_t$ for this uniquely determined measure. 
Then we can define an operator $D\colon C(K)\to B(K)$ as $Df(t)=\int_K f\di\delta_t$, $t\in K$, $f\in C(K)$.
This induces a mapping $D\colon M(K)\to M_{\bnd}(K)$ by the formula $D\mu(f)=\mu(Df)$, $f\in C(K)$ (see \cite[Section 8]{kal-spu-ind}). A simplex $K$ is called a \emph{Bauer} simplex if $\ext K$ is a closed set. Then there is an isometry of $C(\ext K)$ onto $A(K)$ given (see \cite[Theorem II.4.3]{alfsen}) by $J\colon C(\ext K)\to A(K)$, where
\[
Jf(t)=\int_{\ext K} f\di\delta_t,\quad f\in C(\ext K). 
\]
Thus, all Bauer simplices are of the form $M^1(L)$ for a compact space $L$, see \cite[Corollary II.4.2]{alfsen}.

A  function $f\colon K\to \er$ is called \emph{strongly affine} if it is bounded, Borel and $\int_K f\di\mu=f(r(\mu))$, $\mu\in M^1(K)$. It is obvious from the definitions that any continuous affine function  is strongly affine, and any strongly affine function is affine.


\subsection{The dual of $C(K,E)$ and the Singer theorem}
\label{ssc:singer}

If $K$ is a compact space and $E$ is a Banach space, let $C(K,E)$ denote the space of all continuous $E$-valued functions on $K$. Then $C(K,E)$ endowed with the supremum  norm $\norm{f}=\sup_{t\in K} \norm{f(t)}_E$, $f\in C(K,E)$, is a Banach space.
By the Singer theorem (see \cite{hensgen} for an easy proof) the dual space $C(K,E)^*$ of $C(K,E)$ is canonically isometric to $M(K,E^*)$, the space 
of all countably additive regular $E^*$-valued Borel measures on $K$ with bounded variation endowed with the variation norm $\norm{\mu}=\abs{\mu}(K)$.

Let $\Borel(K)$ denote the Borel $\sigma$-algebra on a compact space $K$.
We recall that a mapping $\mu\colon \Borel(K)\to F$, where $F$ is a Banach space, is a \emph{Borel vector measure} if it is finitely additive on $\Borel(K)$ (see \cite[p.\,1, Definition 1]{diesteluhl}). If $\mu$ is countably additive, $\mu$ is called a \emph{countably additive measure}. A measure $\mu$ is \emph{bounded}, if the range of $\mu$ is bounded in $F$.
The \emph{variation} $\abs{\mu}$ of $\mu$ is defined as 
\[
\abs{\mu}(B)=\sup_{\pi(B)}\sum_{A\in \pi(B)}\norm{\mu(A)},\quad B\in\Borel(K),
\]
(here $\pi(B)$ denotes the set of all finite Borel decompositions  of the set $B\in\Borel (K)$).
A vector measure $\mu\colon \Borel(K)\to F$ is \emph{of bounded variation} if $\abs{\mu}(K)<\infty$. 

If $\mu\colon \Borel(K)\to E^*$ of the bounded variation is countably additive, then $\abs{\mu}$ is a countably additive (see \cite[p.\,3, Proposition 9]{diesteluhl}) Borel measure.  A countably additive Borel measure $\mu\colon \Borel(K)\to E^*$ of bounded variation is \emph{regular} provided the scalar measure $\abs{\mu}$ is regular, i.e., it is inner regular with respect to closed sets. The space $M(K,E^*)$ consists of such measures $\mu$.

Given $\mu\in M(K,E^*)$ and $x\in E$, we denote by $\mu_x$ the measure $\mu_x\colon \Borel(K)\to\ef$ given by $\mu_x(B)=\mu(B)x$, $B\in\Borel(K)$.

Let us recall the definition of the integral $\int f\di\mu$ for $\mu\in M(K,E^*)$ and $f\in C(K,E)$.
If $\mu\in M(K,E^*)$ and $f=\sum_{j=1}^n \chi_{A_j}\cdot x_j$ is an $E$-valued simple Borel function 
(i.e., $x_1,\dots,x_n\in E$ and $A_1,\dots,A_n$ are pairwise disjoint Borel subsets of $K$), then
$$\int f\di\mu=\sum_{j=1}^n \mu(A_j)(x_j).$$
It is easy to check that $f\mapsto \int f\di\mu$ is a linear functional on the space of $E$-valued
simple Borel functions and $\abs{\int f\di\mu}\le \norm{f}_\infty\abs{\mu}(K)$. Hence, it may be uniquely continuously extended to the space of $E$-valued functions which may be uniformly approximated by $E$-valued simple Borel functions. We denote this space by $B(K,E)$. It clearly contains $C(K,E)$. (Note that in case $E=\er$ this integral coincides with the classical Lebesgue integral.)

If $\varphi\colon K\to K'$ is a continuous surjection of a compact space onto a compact space and $E$ is a Banach space, $\varphi$ induces a weak$^*$-to-weak$^*$ continuous mapping (denoted likewise) $\varphi\colon M(K,E^*)\to M(K',E^*)$ simply by taking $(\varphi \mu)(f')=\mu(f'\circ \varphi)$, $\mu\in M(K,E^*)$, $f'\in C(K',E)$. Then we have $\abs{\varphi\mu}\le \varphi(\abs{\mu})$.

\subsection{The representing measures for operators on $C(K,E)$}
\label{ssc:repre}

Let $K$ be a  compact space and let $E,F$ be Banach spaces. 
Let $G\colon \Borel(K)\to L(E,F^{**})$ be a bounded vector measure. Then the \emph{semivariation} $\norm{G}$ of $G$ is defined as
\[
\norm{G}(B)=\sup\left\{\norm{\sum_{i=1}^n G(E_i)x_i}\setsep (E_i)_{i=1}^n\subset B\text{ pairwise disjoint Borel}, (x_i)_{i=1}^n\subset B_E, n\in\en\right\},
\]
where $B\in \Borel (K)$.
For each $y^*\in F^*$, let $G_{y^*}\colon \Borel (K)\to E^*$ be defined as $G_{y^*}(B)x=(G(B)x)(y^*)$, $x\in E$. 
Then we have the following formula (see \cite[p.\,55, Proposition 5]{dinculeanu}):
\begin{equation}
    \label{eq:semivar}
    \norm{G}(B)=\sup_{y^*\in B_{F^*}}\abs{G_{y^*}}(B),\quad B\in \Borel(K).
\end{equation}

Assume that $\norm{G}(K)<\infty$, i.e., $G$ is of \emph{bounded semivariation}. Then the formula $Uf=\int f\di G$, $f\in B(K,E)$, defines an operator $U\colon B(K,E)\to F^{**}$ such that $\norm{U}=\norm{G}(K)$.
For a proof of this result, see \cite[Theorem 3.3]{mu-oja-pi}.

We note that the integral $\int_K f\di G$ is obtained as the extension of the linear mapping $f\mapsto \int f \di G$, where $f$ runs through simple functions in $B(K,E)$.

Indeed, for $f=\sum_{i=1}^n \chi_{E_i}x_i$, where $(x_i)\subset E$ and $(E_i)\in\pi(K)$ is a Borel decomposition of $K$, we have
\[
\int_K f\di G=\sum_{i=1}^n G(E_i)(x_i).
\]
Hence, for $y^*\in B_{F^*}$ we have
\[
\begin{aligned}
\abs{\left(\int f\di G\right)y^*}&=\abs{\sum_{i=1}^n(G(E_i)x_i)(y^*)}= \abs{\sum_{i=1}^n G_{y^*}(E_i)(x_i)}\le \sum_{i=1}^n \norm{G_{y^*}(E_i)}_{E^*}\norm{x_i}_E\\
&\le \sum_{i=1}^n\norm{G_{y^*}(E_i)}_{E^*}\norm{f}_\infty\le \abs{G_{y^*}}(K)\norm{f}_\infty\le \norm{G}(K)\norm{f}_\infty.
\end{aligned}
\]
Hence 
\begin{equation}
\label{eq:integral}
\norm{\int f\di G}_{F^{**}}\le \norm{G}(K)\norm{f}_\infty,
\end{equation}
and thus  the integral defines a bounded operator on simple functions in  $B(K,E)$ that has values in $F^{**}$, and hence admits a unique extension to the operator $U\colon B(K,E)\to F^{**}$. 

If $f$ is a function in $B(K,E)$ and $B\in \Borel(K)$, then $g(t)=\begin{cases} f(t),& t\in B,\\
                                                       0,&t\in K\setminus B,\end{cases}$
is also in $B(K,E)$ and thus $\int_B f\di G$ can be understood as $\int_K g\di G$.

If $G\colon \Borel(K)\to L(E,F^{**})$ is a bounded vector measure and $y^*\in F^{*}, x\in E$, we denote by $G_{y^*,x}\colon \Borel(K)\to \ef$ the finitely additive scalar measure defined as
\[
G_{y^*,x}(B)=(G(B)x)(y^*),\quad B\in\Borel(K).
\]
If $\norm{G}(K)<\infty$, $f\in B(K)$,  $x\in E$ and $y^*\in F^*$, then, if $G_{y^*}\in M(K,E^*)$, we have 
\[
\int_{K} f\di G_{y^*,x}=\int_K (f\otimes x)\di G_{y^*}.
\]

The measure $G$ is called \emph{strongly bounded} if its semivariation is continuous at $\emptyset$, i.e., if for every decreasing sequence of Borel sets $(B_i)$ with $\bigcap_{i=1}^\infty B_i=\emptyset$ we have $\lim_{i\to\infty}\norm{G}(B_i)=0$. We have the following characterization of strongly bounded measures.

\begin{thm}
    \label{t:sbmeasures}
 Let $K$ be a compact  space and let $E,F$ be Banach spaces. Let $G\colon \Borel(K)\to L(E,F^{**})$ be of bounded semivariation such that for every $y^*\in F^*$ the vector measure $G_{y^*}$ is a regular countably additive $E^*$-valued measure, i.e., $G_{y^*}\in M(K,E^*)$. Then the following assertions are equivalent:
 \begin{enumerate}[(i)]
 \item The measure $G$ is strongly bounded,
 \item The set $\{\abs{G_{y^*}}\setsep y^*\in B_{F^*}\}$ is relatively weakly compact in $M(K)$.
 \item The set $\{\abs{G_{y^*}}\setsep y^*\in B_{F^*}\}$ is uniformly countably additive.
 \item There exists a measure $\lambda\in M^+(K)$ such that 
 \[
 \lim_{B\in \Borel(K),\lambda(B)\to 0}\norm{G}(B)=0,
 \]
 i.e., we have that
 \[
 \forall \ep>0\exists \delta>0\forall B\in\Borel(K), \lambda(B)<\delta\colon \norm{G}(B)<\ep.
 \]
  \end{enumerate}
 \end{thm}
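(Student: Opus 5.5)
Write $\mathcal M=\{\abs{G_{y^*}}\setsep y^*\in B_{F^*}\}\subset M^+(K)$. This set is bounded, since by \eqref{eq:semivar} $\sup_{y^*\in B_{F^*}}\abs{G_{y^*}}(K)=\norm{G}(K)<\infty$. The key identity is \eqref{eq:semivar}: $\norm{G}(B)=\sup_{\nu\in\mathcal M}\nu(B)$. So all the conditions are statements about the single bounded family $\mathcal M$ of positive regular countably additive measures. The plan is to reduce everything to the classical Bartle--Dunford--Schwartz/Grothendieck characterization of relatively weakly compact subsets of $M(K)$. A bounded set $\mathcal N\subset M(K)$ is relatively weakly compact if and only if it is uniformly countably additive, if and only if there is $\lambda\in M^+(K)$ with $\mathcal N$ uniformly $\lambda$-continuous (see \cite{diesteluhl}, Chapter I.2 and IV.2). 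I will cite this result rather than reprove it.

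\emph{(i)$\Leftrightarrow$(iii).} If $B_i\downarrow\emptyset$, then $\norm{G}(B_i)=\sup_{\nu\in\mathcal M}\nu(B_i)$. Uniform countable additivity of a family of positive measures means exactly that $\sup_{\nu}\nu(B_i)\to0$ for every decreasing sequence $B_i$ with empty intersection. For positive measures this is equivalent to the usual formulation via disjoint sequences: use $\nu(\bigcup_{j\ge i}A_j)$ to pass between the two. Hence (i) and (iii) are literally the same statement.

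\emph{(ii)$\Leftrightarrow$(iii)$\Leftrightarrow$(iv).} Since $\mathcal M$ is bounded, the cited theorem gives (ii)$\Leftrightarrow$(iii), and also the existence of a control measure $\lambda$ with $\lim_{\lambda(B)\to0}\sup_{\nu\in\mathcal M}\nu(B)=0$. By \eqref{eq:semivar} this is precisely (iv). If the control-measure part is not available in the cited form, I will construct $\lambda$ directly. Use the Rybakov/Bartle--Dunford--Schwartz argument: choose $\nu_n\in\mathcal M$ so that $\lambda=\sum 2^{-n}\nu_n/(1+\norm{\nu_n})$ works, using uniform countable additivity to pick the $\nu_n$ (for instance by a maximality/exhaustion argument). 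For (iv)$\Rightarrow$(i), take $B_i\downarrow\emptyset$. Then $\lambda(B_i)\to0$ by countable additivity of $\lambda$, so $\norm{G}(B_i)\to0$.

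The only potentially delicate step is the existence of the control measure $\lambda$ in (iii)$\Rightarrow$(iv), which is the genuinely nontrivial part of the Dunford--Pettis/Grothendieck theory. I plan to cite it rather than reprove it. Regularity of each $\abs{G_{y^*}}$, which is part of the hypothesis, ensures $\mathcal M\subset M(K)$, so the theorem on $M(K)=C(K)^*$ applies.
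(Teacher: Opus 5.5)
Your proposal is correct and follows essentially the same route as the paper. The paper also uses \eqref{eq:semivar} to turn everything into a statement about the bounded family $\{\abs{G_{y^*}}\setsep y^*\in B_{F^*}\}$: it cites \cite[Theorems IV.9.1 and IV.9.2]{dunford-schwartz} for (ii)$\Leftrightarrow$(iii) and (ii)$\Leftrightarrow$(iv), notes (iv)$\Rightarrow$(i) is trivial, and gets (i)$\Rightarrow$(iii) from \eqref{eq:semivar}. Your remark that $\lambda$ can be taken as $\sum 2^{-n}\nu_n/(1+\norm{\nu_n})$ with $\nu_n$ from the family matters: it is what makes $\lambda$ regular, i.e.\ $\lambda\in M^+(K)$ as (iv) requires.
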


\begin{proof}
 Conditions (ii), (iii) are equivalent due to \cite[Theorem IV.9.1, p.\,305]{dunford-schwartz}. Assertions (ii) and (iv) are equivalent by \cite[Theorem IV.9.2, p.\,306]{dunford-schwartz}. Clearly, (iv)$\implies$(i). The implication (i)$\implies$(iii) then follows from formula~\eqref{eq:semivar}.     
\end{proof}

Now we can state the representation theorem for operators on $C(K,E)$.
For the proof, see \cite[Corollary 4.10]{mu-oja-pi}.

\begin{thm}
    \label{t:riesz-singer}
Let $K$ be a compact space, and let $E,F$ be Banach spaces.     
Let $U\colon C(K,E)\to F$ be an operator. Then there exists a unique vector measure $G\colon \Borel(K)\to L(E,F^{**})$ with  bounded semivariation such that:
\begin{itemize}
    \item $Uf=\int f\di G$, $f\in C(K,E)$;
    \item $\norm{U}=\norm{G}$;
    \item for each $y^{*}\in F^{*}$, we have $G_{y^*}\in M(K,E^*)$ and $G_{y^*}=U^*y^*$.
\end{itemize}    
\end{thm}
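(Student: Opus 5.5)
The approach is to build $G$ from the bidual of $U$, define it set by set on $\Borel(K)$, and then verify the three listed properties.

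\emph{Construction.} Take $U^{**}\colon C(K,E)^{**}\to F^{**}$. For $B\in\Borel(K)$ and $x\in E$, the function $\chi_B\otimes x$ lies in $B(K,E)$. It therefore defines an element of $C(K,E)^{**}=M(K,E^*)^*$ via $\mu\mapsto\int(\chi_B\otimes x)\di\mu=\mu(B)x$. This functional has norm at most $\norm{x}$, since $\abs{\mu(B)x}\le\abs{\mu}(K)\norm{x}$. Set
\[
G(B)x=U^{**}(\chi_B\otimes x),
\]
so that $(G(B)x)(y^*)=(U^*y^*)(B)x$. Each $G(B)$ is linear in $x$ with $\norm{G(B)}\le\norm{U}$, and finite additivity in $B$ is inherited from the measures $U^*y^*$. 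By the Singer theorem, $G_{y^*}=U^*y^*\in M(K,E^*)$.

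\emph{Semivariation and norm.} By formula~\eqref{eq:semivar},
\[
\norm{G}(K)=\sup_{y^*\in B_{F^*}}\abs{U^*y^*}(K)=\sup_{y^*\in B_{F^*}}\norm{U^*y^*}=\norm{U^*}=\norm{U},
\]
where the middle equality is the isometry in the Singer theorem. In particular $G$ has bounded semivariation.

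\emph{Integral representation.} For $f\in C(K,E)$ and $y^*\in F^*$, approximate $f$ uniformly by simple Borel functions $f_n$. For simple $f_n$ we have $(\int f_n\di G)(y^*)=\int f_n\di G_{y^*}$ directly from the definitions. Passing to the limit using \eqref{eq:integral} and the bound $\abs{\int f\di\mu}\le\norm{f}_\infty\abs{\mu}(K)$ gives
\[
\Bigl(\int f\di G\Bigr)(y^*)=\int f\di U^*y^*=(U^*y^*)(f)=y^*(Uf).
\]
Hence $\int f\di G=Uf$ as elements of $F^{**}$, with $F$ embedded canonically.

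\emph{Uniqueness.} Suppose $G'$ also satisfies the three conditions. Then $G'_{y^*}=U^*y^*=G_{y^*}$ for every $y^*\in F^*$. Since $(G'(B)x)(y^*)=G'_{y^*}(B)x$, it follows that $G'(B)x=G(B)x$ as functionals on $F^*$, so $G'=G$.

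The main obstacle I expect is the bookkeeping in the construction step. One must check that $\chi_B\otimes x$ really defines a bounded functional on $M(K,E^*)$, which follows from the integral bound above. One must also check that the scalar integrals $\int\cdot\di G_{y^*}$ are compatible with the $F^{**}$-valued integral $\int\cdot\di G$; this holds because both are continuous extensions from the simple functions, on which they agree by definition. This is where regularity and countable additivity of $U^*y^*$, supplied by the Singer theorem, are used.
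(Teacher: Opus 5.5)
Your proof is correct and complete. The paper gives no argument of its own here and only cites \cite[Corollary 4.10]{mu-oja-pi}. Your construction $G(B)x=U^{**}(\varphi_{B,x})$, with $\varphi_{B,x}(\mu)=\mu(B)x$, is the standard one; it is the same formula the paper uses in Theorems~\ref{t:saab-saab} and~\ref{t:saab-saab-predual}, with the lifting $S$ replaced by the Singer identification $C(K,E)^*=M(K,E^*)$.

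One small correction to your closing remark. The identity $(\int f\di G)(y^*)=\int f\di G_{y^*}$ needs only the bounds on simple functions and bounded variation. Regularity and countable additivity enter solely through the Singer theorem: in $U^*y^*\in M(K,E^*)$, in $(U^*y^*)(f)=\int f\di U^*y^*$, and in $\abs{U^*y^*}(K)=\norm{U^*y^*}$.
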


We call $G$ the \emph{representing measure} of the operator $U$. 
 An operator $U$ is \emph{strongly bounded} if  
its representing measure is strongly bounded.
We have the following information on strongly bounded operators.

\begin{thm}
    \label{t:sbo}
    Let $K$ be a compact space and let $E,F$ be Banach spaces.     
Let $U\colon C(K,E)\to F$ be a strongly bounded operator with the representing measure $G$. 
\begin{itemize}
\item Then $G$ has values in $L(E,F)$.
\item There exists a measure $\lambda\in M^+(K)$ such that 
\[
\lambda(B)\le \norm{G}(B),\quad B\in\Borel(K),\qquad\text{and}\qquad\lim_{B\in\Borel(K), \lambda(B)\to 0}\norm{G}(B)=0.
\]
\end{itemize}
\end{thm}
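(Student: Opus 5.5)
The plan is to get the control measure $\lambda$ first and deduce the $L(E,F)$-valuedness from it, using Theorem~\ref{t:sbmeasures}, which applies because Theorem~\ref{t:riesz-singer} gives $G_{y^*}=U^*y^*\in M(K,E^*)$ for every $y^*\in F^*$.

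\emph{Control measure.} Since $G$ is strongly bounded, Theorem~\ref{t:sbmeasures} (ii) says that $\{\abs{G_{y^*}}\setsep y^*\in B_{F^*}\}$ is relatively weakly compact in $M(K)$. By the Bartle--Dunford--Schwartz theorem (\cite[Theorem IV.9.2]{dunford-schwartz}) there are $y_n^*\in B_{F^*}$ such that the set is uniformly absolutely continuous with respect to
\[
\lambda_0=\sum_n 2^{-n}\abs{G_{y_n^*}}.
\]
Put $\lambda=\lambda_0/\lambda_0(K)$ if $\lambda_0\neq0$, and $\lambda=0$ otherwise (then $G=0$). Using \eqref{eq:semivar}, for $B\in\Borel(K)$,
\[
\lambda_0(B)\le \sum_n 2^{-n}\norm{G}(B)=\norm{G}(B),
\]
so $\lambda_0\le\norm{G}$. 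The normalization by $\lambda_0(K)\ge1$ is not what gives this; it is simply convenient, and to keep $\lambda\le\norm{G}$ one can equally keep $\lambda=\lambda_0$. Moreover $\norm{G}(B)=\sup_{y^*\in B_{F^*}}\abs{G_{y^*}}(B)$ tends to $0$ as $\lambda(B)\to0$ by the uniform absolute continuity. This gives the second bullet.

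\emph{Values in $L(E,F)$.} Fix $B\in\Borel(K)$ and $x\in E$; we must show $G(B)x\in F$. Regularity of $\lambda$ gives compact sets $C_k\subset B$ and open sets $O_k\supset B$ with $\lambda(O_k\setminus C_k)\to0$. By Urysohn choose $\varphi_k\in C(K)$ with $0\le\varphi_k\le1$, $\varphi_k=1$ on $C_k$ and $\varphi_k=0$ off $O_k$. Then $U(\varphi_k\otimes x)\in F$, and
\[
\norm{U(\varphi_k\otimes x)-G(B)x}=\norm{\int(\varphi_k-\chi_B)x\di G}\le \norm{x}\,\norm{G}(O_k\setminus C_k),
\]
using that $\varphi_k-\chi_B$ is supported in $O_k\setminus C_k$ with values in $[-1,1]$. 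This bound needs \eqref{eq:integral} applied to $G$ restricted to $O_k\setminus C_k$, i.e.\ the estimate by the semivariation of that set; it holds by approximating with simple functions supported there. The right-hand side tends to $0$ by the control measure, so $G(B)x$ is a norm limit of elements of $F$, and $F$ is closed in $F^{**}$. Hence $G(B)\in L(E,F)$.

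\emph{Main obstacle.} The delicate point is extracting a single control measure from weak compactness. I rely on the Dunford--Schwartz theorem exactly as it is cited in the proof of Theorem~\ref{t:sbmeasures}, so the real content is already available; the remaining care is only the localized integral estimate in the second step.
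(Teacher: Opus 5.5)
Your proof is correct, but it takes a different route from the paper. The paper proves Theorem~\ref{t:sbo} purely by citation: the $L(E,F)$-valuedness comes from \cite[Corollary 4.4.1]{lewis-brooks} and the dominated control measure from \cite[Lemma 2]{dobrakov}. You derive both from tools already in the paper, and in the reverse logical order.

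First you build the control measure from Theorem~\ref{t:sbmeasures}(ii) and the Bartle--Dunford--Schwartz construction. The explicit form $\lambda_0=\sum_n 2^{-n}\abs{G_{y_n^*}}$ together with \eqref{eq:semivar} gives $\lambda_0\le\norm{G}$ at no cost. This is the same reading of the Dunford--Schwartz proof that the paper itself uses in Theorem~\ref{t:controlmeasure}(b).

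Then you get $F$-valuedness by Urysohn approximation of $\chi_B$, with the error bounded exactly as in Fact~\ref{f:odhad}. Your estimate is in fact uniform in $x\in B_E$: $\norm{U(\varphi_k\otimes\cdot)-G(B)}\le\norm{G}(O_k\setminus C_k)\to0$. So you obtain that $G(B)$ is an operator-norm limit of the operators $x\mapsto U(\varphi_k\otimes x)$. This is the $C(K,E)$ analogue of Lemma~\ref{l:approx}, and it immediately gives the analogue of Corollary~\ref{c:operbecka}. By contrast, the paper's own $\inxe$ version of the first bullet, Theorem~\ref{t:controlmeasure}(a), uses a weak$^*$-continuity argument with nets that does not involve the control measure.

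Your route has two dependencies that the citations outsource:
\begin{enumerate}[(1)]
\item It relies on regularity of $\lambda_0$. This holds, since $\lambda_0$ is a norm-convergent sum of the regular measures $\abs{G_{y_n^*}}$, but you should say so explicitly.
\item It relies on the internal form of the measure produced in the proof of the Dunford--Schwartz theorem, not just on its statement.
\end{enumerate}

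One slip to fix: drop the normalization. The assertion $\lambda_0(K)\ge1$ is unfounded; one only has $\lambda_0(K)\le\norm{G}(K)$, which may be small. Dividing by $\lambda_0(K)<1$ would then destroy $\lambda\le\norm{G}$. Take $\lambda=\lambda_0$, as you yourself allow.
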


\begin{proof}
For the proof of the first assertion, see \cite[Corollary 4.4.1]{lewis-brooks}, and the proof of the second assertion can be found in \cite[Lemma 2]{dobrakov}.
\end{proof}

We mention the following frequently used observation useful in various estimates.

\begin{fact}
    \label{f:odhad}
Let $K$ be a compact space and $E,F$ be Banach space.  
Let $G\colon \Borel(K)\to L(E,F^{**})$ be of bounded semivariation such that for every $y^*\in F^*$ the vector measure $G_{y^*}$ is a regular countably additive $E^*$-vector valued measure, i.e., $G_{y^*}\in M(K,E^*)$.
 If $f\in B(K,E)$ is bounded by $M\ge 0$ on a set $B\in \Borel(K)$, then
\[
\norm{\int_{B} f\di G}_{F^{**}}\le M\cdot \norm{G}(B).
\]    
\end{fact}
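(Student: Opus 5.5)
The plan is to reduce to functionals. Fix $y^*\in B_{F^*}$. The element $\bigl(\int_B f\di G\bigr)(y^*)$ is then a scalar integral against the $E^*$-valued measure $G_{y^*}\in M(K,E^*)$. Taking the supremum over $y^*$ and using formula \eqref{eq:semivar}, $\norm{G}(B)=\sup_{y^*\in B_{F^*}}\abs{G_{y^*}}(B)$, it suffices to prove
\[
\abs{\left(\int_B f\di G\right)(y^*)}\le M\,\abs{G_{y^*}}(B),\qquad y^*\in B_{F^*}.
\]

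First I treat simple functions. If $f=\sum_{i=1}^n\chi_{E_i}x_i$ with $(E_i)$ a Borel decomposition of $K$, then $f\chi_B=\sum_i\chi_{E_i\cap B}x_i$, and only the sets with $E_i\cap B\neq\emptyset$ contribute. For those sets we have $\norm{x_i}\le M$, since $f$ equals $x_i$ at some point of $B$. Therefore
\[
\abs{\sum_i G_{y^*}(E_i\cap B)(x_i)}\le M\sum_i\norm{G_{y^*}(E_i\cap B)}\le M\abs{G_{y^*}}(B),
\]
because the nonempty sets $E_i\cap B$ form a finite Borel decomposition of $B$. Taking the supremum over $y^*\in B_{F^*}$ gives the claim for simple $f$.

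The general case is the only step needing care: approximating $f$ by simple functions without violating the bound $M$ on $B$. Given $f\in B(K,E)$, I choose simple $f_k\to f$ uniformly and replace them by $g_k=f_k\chi_B$. Then $\norm{g_k(t)}\le M+\norm{f_k-f}_\infty$ on $B$, and $g_k\to f\chi_B$ uniformly. By the definition of $\int_B f\di G=\int_K f\chi_B\di G$ and the continuity estimate \eqref{eq:integral}, we get $\int_K g_k\di G\to\int_B f\di G$ in $F^{**}$. The simple case yields $\norm{\int g_k\di G}\le (M+\norm{f_k-f}_\infty)\norm{G}(B)$. Letting $k\to\infty$ finishes the proof.

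(Alternatively, one could radially retract $f_k$ onto $M B_E$ on $B$, which keeps the functions simple and avoids the error term. The limiting argument above is simpler.)
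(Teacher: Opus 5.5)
Your proof is correct and follows essentially the paper's route: approximate $f\chi_B$ uniformly by simple functions, bound the simple case by the semivariation over $B$, and pass to the limit using \eqref{eq:integral}. The differences are cosmetic. You truncate by $\chi_B$, so no error term from $K\setminus B$ appears, whereas the paper picks a partition respecting $B$ and estimates the part outside $B$ by $\varepsilon\norm{G}(K)$. You also bound the simple case through $\abs{G_{y^*}}(B)$ and \eqref{eq:semivar}, rather than directly from the definition of $\norm{G}(B)$.
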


\begin{proof}
First, we observe that $\norm{\int_K f\di G}_{F^{**}}\le \norm{f}_\infty \norm{G}(K)$ for every $f\in B(K,E)$ due to \eqref{eq:integral}.

Given $f\in B(K,E)$ bounded by $M$ on $B\in\Borel(K)$,  let $g(t)=\begin{cases} f(t),& t\in B,\\
                                        0,&t\in K\setminus B,\end{cases}$.
Let $\ep>0$ be given. Then there exists a function $h=\sum_{i=1}^n \chi_{E_i}x_i$, where $(E_i)$ is a Borel decomposition of $K$ and $(x_i)\subset E$ such that
\begin{itemize}
    \item $\sup_{t\in K}\norm{g(t)-h(t)}_E<\ep$,
    \item $B$ respects $(E_i)_{i=1}^n$, i.e., for each $i\in\{1,\dots,n\}$ either $E_i\subset B$ or $E_i\cap B=\emptyset$.
 \end{itemize}   
Then 
\[
\norm{h(t)}_E\le \begin{cases} \ep,& t\in K\setminus B,\\
               M+\ep,&t\in B.\end{cases}
               \]
Let 
\[
h_1(t)=\begin{cases} h(t),& t\in K\setminus B,\\
                            0,&t\in B,
                            \end{cases}\quad\text{and}\quad
h_2(t)=\begin{cases} 0,& t\in K\setminus B,\\
                            h(t),&t\in B.                           
        \end{cases}                                                
\]
If $I\subset \{1,\dots,n\}$ satisfies $B=\bigcup_{i\in I} E_i$, then $h_2(t)=\sum_{i\in I}\chi_{E_i}(t)x_i$.
Hence,
\[
\begin{aligned}
\norm{\int_B f\di G}_{F^{**}}&=\norm{\int_K g\di G}_{F^{**}}=\norm{\int_K (g-h)\di G+\int_K h\di G}_{F^{**}}\\
&\le \norm{\int_K (g-h)\di G}_{F^{**}}+\norm{\int_K h_1\di G}_{F^{**}}+(M+\ep)\norm{\sum_{i\in I} G(E_i)(\frac{x_i}{M+\ep})}_{F^{**}}\\
&\le \ep\norm{G}(K)+\ep\norm{G}(K)+(M+\ep)\norm{G}(B).
\end{aligned}
\]
As $\ep>0$ is arbitrary, the claim follows.
\end{proof}

\subsection{Operators on Banach spaces}
\label{sec:oper}

If $T\colon E\to F$ is an operator between Banach spaces $E$ and $F$, we can study its (sequential) behavior with respect to different topologies on respective spaces. These considerations lead to the  notions below.

But first we recall a slightly less usual definition of the Right topology.
If $E$ is a Banach space, let $\rho$ be the topology of uniform convergence on weakly compact subsets of $E^*$. This topology is called the \emph{Right topology} and  is the restriction to $E$ of the Mackey topology $\mu(E^{**}, E^*)$ on $E^{**}$ with respect to the dual pair $(E^{**}, E^{*})$. (For more details, see \cite{pe-vi-wr-y}.)

Let $U\colon E\to F$ be an operator on a Banach space $E$ with values in a Banach space $F$. We say that $U$ is 
\begin{itemize}
\item  \emph{compact} (c) if it maps the unit ball to a relatively compact set;
   \item \emph{weakly compact} (wc) if it maps the unit ball to a relatively weakly compact set;
    \item \emph{completely continuous} (cc) if it is weak-to-norm sequentially continuous;
    \item \emph{weakly precompact} (wpc) if the image of any bounded sequence admits a weakly Cauchy subsequence;
    \item \emph{pseudo weakly compact} (pwc) if it is Right-to-norm sequentially continuous;
    \item \emph{weakly completely continuous} (wcc)  if it maps weakly Cauchy sequences to weakly convergent sequences;
    \item \emph{Right completely continuous} (Rcc) if it maps Right Cauchy sequences to Right convergent sequences;
    \item \emph{unconditionally converging} (uc) if it maps weakly unconditionally Cauchy series to the unconditionally convergent series (we recall that a series $\sum_{n=1}^\infty x_n$ in a Banach space $E$ is \emph{weakly unconditionally Cauchy} if for all $x^*\in E^*$ the series $\sum_{n=1}^\infty\abs{x^*(x_n)}$ converges and it is \emph{unconditionally convergent} if the series $\sum_{n=1}^\infty t_n x_n$ converges whenever $(t_n)$ is a bounded sequence of scalars).
\end{itemize}

It follows from \cite{kacena-jmaa}, \cite{pe-vi-wr-y} and \cite{krulisova} that for every operator $U$ between Banach spaces the following implications hold:
\begin{equation}
    \label{eq:implic}
\begin{array}[t]{ccccccccc}
&&&&\text{wpc}&&&&\\
&&&\Nearrow&&&&&\\
              &&\text{wc}&\implies&\text{pwc}&&&& \\
              &\Nearrow&&&&\Searrow\\
              
            \text{compact}&&&\mathrlap{\Searrow}{\Nearrow}&&& \text{Rcc}&\implies& \text{uc.}\\
            &\Searrow&&&&\Nearrow&&\\
            &&\text{cc}&\implies&\text{wcc}&&
        \end{array}
\end{equation}

For a Banach space $E$, the previous concepts inspire the definitions below. 
For a Banach space $E$, we say that
\begin{itemize}
\item $E$ has the \emph{Pelczy\'nski property} (V) if every unconditionally converging operator on $E$ is weakly compact;
\item  $E$ has the \emph{property semi-(V)} if every unconditionally converging operator on $E$ is weakly completely continuous;
\item $E$ has the \emph{Dunford-Pettis property} (DPP) if  every weakly compact operator on $E$ is completely continuous;
\item $E$ has the \emph{reciprocal Dunford-Pettis property} (RDP) if every  completely continuous operator on $E$  is weakly compact;
\item $E$ has the \emph{Dieudonné property} (D) if every weakly completely continuous operator on $E$ is weakly compact;
\item $E$ has the \emph{Right Dieudonné property} (RD) if every Right completely continuous  operator on $E$ is weakly compact;
\item $E$ is \emph{sequentially Right} (SR) if every pseudo weakly compact operator on $E$ is weakly compact;
\item  $E$ has the \emph{Schur property} (S) if every weakly null sequence is norm null;
\item $E$ has the \emph{property (u)}  if  for every weakly Cauchy sequence $(x_n)$ in $E$, there exists a weakly unconditionally Cauchy series $\sum_{n=1}^\infty z_n$ in $E$ such that the sequence $(x_n - \sum_{j=1}^n z_j)$ is weakly null.
\end{itemize}

Then, for a Banach space $E$, we have 
\[
E\text{ has (S)}\implies E\text{ has (DPP)},
\]
and   from \eqref{eq:implic} (see also \cite{krulisova}) we get the following diagram:

\begin{equation}
    \label{eq:spaces}
\begin{array}[t]{ccccccc}
              &&&&E\text{ is (SR)}&&\\
              &&&\Nearrow&&\Searrow&\\
              E\text{ has (V)}&\implies& E\text{ has (RD)}&\implies&E \text{ has (D)}&\implies& E\text{ has (RDP)}\\
            &\Searrow&&&&&\\
           E\text{ has (u)}&\implies& E\text{ has semi-(V).}&&&&
        \end{array}
\end{equation}

\subsection{$L_1$-preduals and their injective tensor products}
\label{ss:preduals}

If $K$ is a compact convex set and $E$ is a Banach space, let  $A(K,E)$ stand for the space of all affine continuous functions on $K$ with values in $E$, where the norm is induced from the space $C(K,E)$.  A measure $\mu\in M(K,E^*)$ is called \emph{boundary} if its variation $\mu$ is maximal on $K$. We write $M_{\bnd}(K,E^*)$ for the set of all boundary measures $\mu\in M(K,E^*)$.

If $X$ is a Banach space, a set $A\subset B_{X^*}$ is \emph{$S_{\ef}$-homogeneous} (or just \emph{homogeneous}), if $\alpha A=A$ for each $\alpha\in S_{\ef}$.
If $X, E$ are Banach spaces over $\ef$ and $f\colon B_{X^*}\to E$ is a function, it is called \emph{$S_{\ef}$-homogeneous} (or \emph{homogeneous}) if $f(\alpha t)=\alpha f(t)$, $t\in B_{X^*}$, $\alpha\in \TT$. Hence a homogeneous function is \emph{odd} in case $\ef=\er$. We write $C_{\hom}(B_{X^*},E)$ for the space of all homogeneous continuous functions from $B_{X^*}$ into $E$.
A measure $\mu\in M(B_{X^*},E^*)$ is \emph{antihomogeneous} if $\mu(\alpha B)=\ov{\alpha} \mu(B)$ for each $B\in \Borel(B_{X^*})$ and $\alpha\in\TT$. We write $M_{\ahom}(B_{X^*}, E^*)$ for the antihomogeneous elements of $M(B_{X^*},E^*)$. A measure $\mu\in M(B_{X^*},E^*)$ is called \emph{invariant} if $\mu(\alpha B)=\mu(B)$ for each $B\in\Borel(B_{X^*})$ and $\alpha\in\TT$. The notation $M_{\bnd,\ahom}(B_{X^*},E^*)$ is reserved for boundary antihomogeneous elements of $M(B_{X^*},E^*)$.

Let $\di \alpha$ denote the probability Haar measure on $\TT$. Then we consider an operator $\vhom\colon C(B_{X^*}, E)\to C_{\hom}(B_{X^*},E)$ defined as
\[
\vhom f(t)=\int_{\TT} \alpha^{-1}f(\alpha t)\di\alpha,\quad  t\in B_{X^*}, f\in C(B_{X^*},E).
\]
(The integral is the Bochner one - it clearly exists as the integrand is a continuous vector function on the compact space $\TT$. Also, it easily follows that $\hom f\in C_{\hom}(B_{X^*},E)$ for each $f\in C(B_{X^*},E)$, see \cite[p.\,524]{saabs-illinois}.)

Then we can consider the dual operator $\vhom \colon M(B_{X^*},E^*)\to M(B_{X^*}, E^*)$ defined  as
\[
\vhom \mu(f)=\int_{B_{X^*}}\vhom f\di\mu,\quad f\in C(B_{X^*},E),\mu\in M(B_{X^*}, E^*).
\]

We have the following fact.
\begin{fact}
    \label{f:homog}
   We have  $\vhom \mu\in M_{\ahom}(B_{X^*},E^*)$ for each $\mu\in M(B_{X^*},E^*)$.
\end{fact}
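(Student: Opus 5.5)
The plan is to verify antihomogeneity directly on continuous test functions and then pass from functionals to Borel sets by uniqueness in the Singer theorem. Fix $\mu\in M(B_{X^*},E^*)$ and $\alpha\in\TT$. Let $\varphi_\alpha\colon B_{X^*}\to B_{X^*}$ be the homeomorphism $t\mapsto \alpha t$. As in Section~\ref{ssc:singer}, it induces the map $\varphi_\alpha\colon M(B_{X^*},E^*)\to M(B_{X^*},E^*)$ with $(\varphi_\alpha\nu)(f)=\nu(f\circ\varphi_\alpha)$, and on Borel sets $(\varphi_\alpha\nu)(B)=\nu(\alpha^{-1}B)$. The condition $\nu(\alpha B)=\ov{\alpha}\nu(B)$ for all $B$ and all $\alpha$ is therefore equivalent to
\[
\nu\circ\varphi_{\alpha}^{-1}=\ov{\alpha^{-1}}\,\nu\quad\text{for all }\alpha\in\TT.
\]
Since $\abs{\alpha}=1$, this says $\varphi_\alpha\nu=\alpha\nu$ as elements of $M(B_{X^*},E^*)$. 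By the Singer theorem, elements of $M(B_{X^*},E^*)$ are determined by their action on $C(B_{X^*},E)$. So it suffices to prove
\[
\vhom\mu(f\circ\varphi_\alpha)=\alpha\,\vhom\mu(f),\qquad f\in C(B_{X^*},E).
\]
Here scalar multiplication of an $E^*$-valued measure is understood so that $(\alpha\nu)(f)=\alpha\,\nu(f)$, consistent with the pairing $\mu(A)(x)$.

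To get this identity, I will compute $\vhom(f\circ\varphi_\alpha)$. For $t\in B_{X^*}$,
\[
\vhom(f\circ\varphi_\alpha)(t)=\int_{\TT}\beta^{-1}f(\alpha\beta t)\di\beta .
\]
Substitute $\gamma=\alpha\beta$, using invariance of the Haar measure on the group $\TT$. Then $\beta^{-1}=\alpha\gamma^{-1}$, and the integral becomes $\alpha\int_{\TT}\gamma^{-1}f(\gamma t)\di\gamma=\alpha\,\vhom f(t)$. Hence $\vhom(f\circ\varphi_\alpha)=\alpha\vhom f$ pointwise, and these are continuous functions. Integrating against $\mu$ gives
\[
\vhom\mu(f\circ\varphi_\alpha)=\mu(\alpha\vhom f)=\alpha\,\vhom\mu(f),
\]
where the last step uses linearity of the integral $\int\cdot\di\mu$.

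It remains to check that $\vhom\mu$ really lies in $M(B_{X^*},E^*)$. The operator $\vhom$ on $C(B_{X^*},E)$ has norm at most $1$, since Bochner integrals of functions bounded by $\norm{f}$ against a probability measure are bounded by $\norm{f}$. So $f\mapsto\mu(\vhom f)$ is a bounded functional, and the Singer theorem represents it by a unique element of $M(B_{X^*},E^*)$. This unique measure is what $\vhom\mu$ denotes.

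The only delicate point is the bookkeeping in the first paragraph. One has to make sure that $\varphi_\alpha\nu=\alpha\nu$, tested on continuous functions, really translates into $\nu(\alpha B)=\ov{\alpha}\nu(B)$ on Borel sets, with the conjugation coming out correctly from $\alpha^{-1}=\ov{\alpha}$. This needs two checks:
\begin{enumerate}[(i)]
\item the image measure $\varphi_\alpha\nu$ is regular, which holds because $\varphi_\alpha$ is a homeomorphism;
\item two regular measures that agree on $C(B_{X^*},E)$ agree on all Borel sets, which is exactly the uniqueness part of the Singer isometry.
\end{enumerate}
With both in place, evaluating the set identity at $\alpha^{-1}B$ gives $\vhom\mu(\alpha B)=\ov{\alpha}\,\vhom\mu(B)$ for every $B\in\Borel(B_{X^*})$ and every $\alpha\in\TT$, which is the claim.
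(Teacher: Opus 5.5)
Your proof is correct, but it takes a different route from the paper's. The paper first reduces to scalar measures. It checks that $(\hom\mu)_x=\hom(\mu_x)$ for each $x\in E$ by testing on the functions $f\otimes x$, since $\hom(f\otimes x)=\hom f\otimes x$. Antihomogeneity of $\hom\mu$ then follows from antihomogeneity of the scalar measures $\hom(\mu_x)$, and for those the paper simply cites \cite[Lemma 3.17(a)]{Kaspu-studia}.

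You instead work directly with $E^*$-valued measures. The Haar-invariance computation gives $\hom(f\circ\varphi_\alpha)=\alpha\,\hom f$ on $C(B_{X^*},E)$. Uniqueness in the Singer representation, applied to the image measure $\varphi_\alpha(\hom\mu)$, then gives the set identity at once. In effect you reprove the cited scalar lemma in the vector-valued setting. This makes the fact self-contained and removes the reduction step; the paper's version is shorter only because it delegates exactly this computation to the literature.

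There is one trivial bookkeeping slip. From $\hom\mu(\alpha^{-1}C)=\alpha\,\hom\mu(C)$ for all Borel $C$, the claim follows by taking $C=\alpha B$, not $C=\alpha^{-1}B$. Equivalently, apply the identity with $\alpha^{-1}$ in place of $\alpha$.
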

\begin{proof}
To see that $\vhom \mu$ is an antihomogeneous measure, consider any $B\in\Borel(B_{X^*})$ and $\alpha\in\TT$. Given $x\in X$, we have $(\hom \mu)_x=\hom(\mu_x)$. 

Indeed, for $f\in C(B_{X^*})$ we have 
\[
\begin{aligned}
(\hom \mu)_x(f)&=\int_{B_{X^*}} (f\otimes x)\di\hom \mu=\int_{B_{X^*}} (\hom f\otimes x)\di\mu\\
&=\int_{B_{X^*}}\hom f \di(\mu_x)=(\hom \mu_x)(f).
\end{aligned}
\]
This gives that it is enough to consider $\mu\in M(B_{X^*})$. But then the equality $(\hom \mu)(\alpha B)=\ov{\alpha} (\hom \mu)(B)$ follows from \cite[Lemma 3.17(a)]{Kaspu-studia}. 
\end{proof}

We mention that antihomogeneous measures are called homogeneous in \cite{saabs-illinois} and \cite{effros}.

Let $K$ be a compact convex set and $X=A(K)$. Let $\phi\colon K\to B_{X^*}$ be the evaluation mapping given by $\phi(t)(a)=a(t)$, $a\in X$, $t\in K$. Then we have a homeomorphic mapping $\theta\colon \TT\times K\to B_{X^*}$ given as
$\theta(\alpha, t)=\alpha\phi(t)$, $(\alpha,t)\in \TT\times K$. Since $\TT\cdot \phi(K)$ is a compact space containing extreme points of $\ext B_{X^*}$, any boundary measure $\mu\in M(B_{X^*},E^*)$ is carried by $\theta(\TT\times K)$.

For a measure $\omega\in M(\TT\times K,E^*)$ we denote by $H\omega$ the measure in $M(K,E^*)$ defined as
\[
H\omega(f)=\int_{\TT\times K}\alpha f(t)\di\omega(\alpha,t),\quad f\in C(K,E).
\]
\begin{lemma}
\label{l:prenos}
Let $K$ be a compact convex set, $X=A(K)$ and $E$ be  a Banach space.
\begin{enumerate}[(a)]
\item $\mu\in M(K,E^*)$ is boundary if and only if $\mu_x\in M(K)$ is boundary for any $x\in E$.
    \item If $\mu\in M_{\bnd}(K,E^*)$, then $\phi(\mu)$ and $\vhom \phi(\mu)$ are boundary on $B_{X^*}$.
    \item  If $\nu\in M_{\bnd}(B_{X^*},E^*)$, then $H(\theta^{-1}(\nu))\in M_{\bnd}(K,E^*)$.
\end{enumerate}
\end{lemma}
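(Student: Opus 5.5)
The three parts go by reduction to scalar measures via the Mokobodzki test and the identities $(\hom\mu)_x=\hom(\mu_x)$, $(\phi\mu)_x=\phi(\mu_x)$.

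For (a), write $\abs{\mu}$ for the variation. If $\mu$ is boundary, i.e.\ $\abs{\mu}$ is maximal, then each $\mu_x$ satisfies $\abs{\mu_x}\le \norm{x}\abs{\mu}$, so $\abs{\mu_x}$ is absolutely continuous with bounded density with respect to a maximal measure. A positive measure dominated by a maximal one is maximal (this follows from the Mokobodzki test: $\nu\le c\lambda$ and $\lambda(g^*-g)=0$ with $g^*-g\ge0$ give $\nu(g^*-g)=0$). Hence $\mu_x$ is boundary.

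Conversely, assume every $\mu_x$ is boundary. We need $\abs{\mu}(g^*-g)=0$ for $g\in C(K,\er)$ convex. The function $h=g^*-g\ge 0$ is upper semicontinuous (or at least Borel). By regularity and the definition of variation, $\abs{\mu}(h)$ can be approximated by sums $\sum_i \abs{\mu(A_i)x_i}$ over finite Borel partitions with $x_i\in B_E$. More cleanly, $\abs{\mu}(B)=\sup\sum_i\abs{\mu_{x_i}(A_i)}$ for $B\in\Borel(K)$. Take $B=\{h>\ep\}$. Then $\abs{\mu_{x}}(B)\le \ep^{-1}\abs{\mu_x}(h)=0$ for every $x$, so $\abs{\mu}(B)=0$. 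Letting $\ep\to0$ gives $\abs{\mu}(h>0)=0$, and Mokobodzki yields that $\abs{\mu}$ is maximal. The main obstacle is this passage from the $\mu_x$ to $\abs{\mu}$. The set-wise argument above avoids it, provided $h$ is Borel. This holds since $g^*$ is an infimum of continuous functions and hence upper semicontinuous.

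For (b), by (a) applied on $B_{X^*}$ (the statement of (a) holds for any compact convex set) it suffices to treat scalar measures $\mu_x$. We use $(\phi\mu)_x=\phi(\mu_x)$ and $(\hom\mu)_x=\hom(\mu_x)$, the latter established in the proof of Fact~\ref{f:homog}. For scalar boundary $\mu$ on $K$, $\phi(\mu)$ is boundary on $B_{X^*}$ because $\phi$ is affine. For a convex continuous $g$ on $B_{X^*}$, $g\circ\phi$ is convex and $(g\circ \phi)^*\le g^*\circ\phi$, which again uses affinity of $\phi$ together with the fact that $a\circ\phi\in A(K)$ for affine continuous $a$ on $B_{X^*}$. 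Therefore $\abs{\phi\mu}(g^*-g)\le\abs{\mu}((g\circ\phi)^*-g\circ\phi)=0$, using $\abs{\phi\mu}\le\phi\abs{\mu}$. Next, $\hom\nu$ is an average of the rotated images $\alpha^{-1}(\text{rotation by }\alpha)\nu$. Each rotation is an affine homeomorphism of $B_{X^*}$ and so preserves maximality. Since the maximal measures form a closed-under-averaging cone, and $\abs{\hom\nu}\le\int_{\TT}\abs{R_\alpha\nu}\di\alpha$, domination gives that $\hom\nu$ is boundary.

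For (c), again reduce by (a) to scalar $\nu$, using $(H\theta^{-1}\nu)_x=H\theta^{-1}(\nu_x)$. Boundary $\nu$ is carried by $\theta(\TT\times K)$, and we have $\abs{H\omega}\le \pi_K\abs{\omega}$, where $\pi_K$ is the projection. For convex continuous $g$ on $K$, we set $\tilde g=g\circ\phi^{-1}$ on $\phi(K)$. We then need a convex continuous function $G$ on $B_{X^*}$ with $G\circ\theta(\alpha,t)\ge g(t)$ and $G^*\circ\theta\le g^*\circ\pi$ up to arbitrary $\ep$. A natural candidate is $G(s)=\sup_{\alpha}$ of extensions, or alternatively the function $s\mapsto \max_\alpha \Re$-type gauges. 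This construction is the delicate step. If it proves awkward, we would instead use that maximal measures on $B_{X^*}$ are carried by Baire sets containing $\ext B_{X^*}\subset\TT\cdot\phi(\ext K)$, and that $\theta^{-1}$ and projection map such sets appropriately. Either route yields that $\pi_K\abs{\theta^{-1}\nu}$ is maximal, and domination finishes the proof.
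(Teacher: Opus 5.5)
Part (a) is correct and is essentially the paper's argument. Your rotation-averaging proof that $\vhom$ preserves boundary measures is a valid replacement for the lemma from Lacey that the paper cites. The gap is in (b), in showing that $\phi(\mu)$ is boundary. From $\abs{\phi\mu}\le\phi\abs{\mu}$ you get $\abs{\phi\mu}(g^*-g)\le\abs{\mu}(g^*\circ\phi-g\circ\phi)$. To bound this by $\abs{\mu}((g\circ\phi)^*-g\circ\phi)=0$ you need $g^*\circ\phi\le(g\circ\phi)^*$. The inequality you derived, $(g\circ\phi)^*\le g^*\circ\phi$, points the other way and yields nothing. Affinity of $\phi$ cannot supply the needed direction: an affine embedding of $[0,1]$ into a triangle sending $0$ to an interior point maps the maximal measure $\ep_0$ to a non-maximal one. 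The missing ingredient, which the paper invokes explicitly, is that $\phi(K)=\{s\in B_{X^*}\setsep s(1)=1\}$ ($1$ the constant function) is a closed face of $B_{X^*}$. Hence every $\nu\in M_{\phi(t)}(B_{X^*})$ is carried by $\phi(K)$ and is the image of a measure in $M_t(K)$. This gives $g^*(\phi(t))=\sup\{\nu(g)\setsep \nu\in M_{\phi(t)}(B_{X^*})\}=(g\circ\phi)^*(t)$, and with this equality your chain goes through.

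In (c) you reduce to scalar measures as the paper does. However, the scalar claim, that $\pi_K(\theta^{-1}\abs{\nu})$ is maximal on $K$, is the entire content of (c), and you only announce it. Neither route is carried out. The Baire-set route, as sketched, works only for metrizable $K$, where maximal measures are carried by $\ext B_{X^*}\subset\TT\cdot\phi(\ext K)$. The paper does not prove this step in-house either; it cites \cite[Lemma 3.6(v)]{ka-spu-constants}.

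Your ``$\max_\alpha\Re$'' candidate can be completed, but again only with the face property. For $g=\max_i a_i$ with $a_i\in A(K,\er)$, $a_i\ge 0$, put $G(s)=\max_i\abs{s(a_i)}$. This $G$ is convex and continuous on $B_{X^*}$, with $G(\alpha\phi(t))=g(t)$. Each $\alpha\phi(K)$ is a closed face, so the representing measures of $\alpha\phi(t)$ are images of measures in $M_t(K)$. Hence $G^*(\alpha\phi(t))=g^*(t)$, and therefore $(\pi_K\theta^{-1}\abs{\nu})(g^*-g)=\abs{\nu}(G^*-G)=0$. Shifting by a constant does not change $g^*-g$, so this covers every finite maximum of affine continuous functions. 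These maxima are uniformly dense among convex continuous functions \cite[Corollary I.1.3]{alfsen}, and $g\mapsto g^*$ is $1$-Lipschitz, so the Mokobodzki test gives maximality.
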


\begin{proof}
(a) If $\mu\in M(K,E^*)$ is boundary and $x\in E$, then $\abs{\mu_x}\le \abs{\mu}\norm{x}$ is maximal, and thus $\mu_x$ is boundary.

Conversely, if $\mu\in M(K,E^*)$ is not boundary, then there exists $g\in C(K,\er)$ convex such that for $B=\{t\in K\setsep g^*(t)=g(t)\}$ we have $\abs{\mu}(K\setminus B)>0$. Let $A\subset K\setminus B$ be Borel such that $\norm{\mu(A)}>0$. Then there exists $x\in E$ with $\abs{\mu(A)x}>0$.
But then $\abs{\mu_x}(A)>0$, and hence $\abs{\mu_x}$ is not maximal. Hence also $\mu_x$ is not boundary.

(b) It is easy to see that, for any $\mu\in M(K,E^*)$ and $x\in E$, 
\[
(\vhom \phi(\mu))_x=\hom\phi(\mu_x).
\]
Hence it is enough to show the assertion for a scalar measure $\mu$. But then $\phi\mu$ is boundary since $\phi\colon K\to B_{X^*}$ is an affine homeomorphism of $K$ onto the closed face $\phi(K)$ of $B_{X^*}$. Further, $\hom \phi(\mu)$ is also boundary by \cite[Chapter 7, \S 23, Lemma 10]{lacey}.

(c) As above we observe that $H(\theta^{-1}(\nu))_x=H(\theta^{-1}(\nu_x))$, $x\in E$, and thus it is enough to check the assertion for a scalar measure $\nu$. But this follows from \cite[Lemma 3.6(v)]{ka-spu-constants}.
\end{proof}

For the dual ball $B_{X^*}$ of a Banach space $X$ and a Banach space $E$, we write $A_{\hom}(B_{X^*},E)$ for the space of all affine homogeneous continuous functions from $B_{X^*}$ to $E$.

\begin{fact}
    \label{f:dualita}
    Let $X$ be a Banach space and $f\colon B_{X^*}\to \ef$. Then the following assertions are equivalent.
    \begin{enumerate}[(i)]
        \item There exists $x\in X$ such that $f(t)=t(x)$, $t\in B_{X^*}$.
        \item The function $f$ belongs to $A_{\hom}(B_{X^*},\ef)$.
    \end{enumerate}
\end{fact}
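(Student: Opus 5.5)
The implication (i)$\Rightarrow$(ii) is a direct verification. If $f(t)=t(x)$ for some $x\in X$, then $f$ is the restriction to $B_{X^*}$ of the evaluation $t\mapsto t(x)$, which is linear on $X^*$. Hence $f$ is affine. It is homogeneous, since $f(\alpha t)=(\alpha t)(x)=\alpha f(t)$ for $\alpha\in\TT$. It is continuous because evaluation at $x$ is weak$^*$ continuous.

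For (ii)$\Rightarrow$(i), let $f\in A_{\hom}(B_{X^*},\ef)$. The plan has three steps.
\begin{enumerate}[(1)]
\item Extend $f$ to a linear functional $\tilde f$ on all of $X^*$ by setting $\tilde f(t)=\norm{t}\,f(t/\norm{t})$ for $t\ne 0$ and $\tilde f(0)=0$. We have $f(0)=0$, because homogeneity with $\alpha=-1\in\TT$ gives $f(0)=-f(0)$.
\item Check that $\tilde f$ is linear.
\item Show that $\tilde f$ is weak$^*$ continuous on $X^*$, so that it is given by some $x\in X$.
\end{enumerate}

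For step (2), combine affinity with $f(0)=0$. For $t\in B_{X^*}$ and $r\in[0,1]$ we get $f(rt)=f(rt+(1-r)0)=rf(t)$, so $f$ is positively homogeneous on the ball. Together with $\TT$-homogeneity this gives $f(\lambda t)=\lambda f(t)$ whenever $\abs{\lambda}\le1$. Consequently $\tilde f(\lambda t)=\lambda\tilde f(t)$ for all $t\in X^*$ and all scalars $\lambda$.

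Additivity follows from the same two facts. Given $s,t\in X^*$, choose $c>0$ with $s/c,t/c\in \frac12 B_{X^*}$. Then
\[
\tilde f(s+t)=2c\,f\Big(\tfrac12\cdot\tfrac{s}{c}+\tfrac12\cdot\tfrac{t}{c}\Big)=2c\Big(\tfrac12 f(s/c)+\tfrac12 f(t/c)\Big)=\tilde f(s)+\tilde f(t).
\]
In the complex case, complex linearity holds because scalar multiplication by any $\lambda\in\ce$ was already handled above.

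Step (3) is the main point. We know $\tilde f$ is linear on $X^*$ and its restriction to $B_{X^*}$ is weak$^*$ continuous. By the Banach--Dieudonné theorem (equivalently, the Krein--Šmulian theorem applied to the kernel), a linear functional on $X^*$ whose restriction to the unit ball is weak$^*$ continuous is weak$^*$ continuous on all of $X^*$. A weak$^*$ continuous linear functional on $X^*$ is evaluation at some $x\in X$, so $f(t)=\tilde f(t)=t(x)$ for $t\in B_{X^*}$.

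To keep the argument elementary, one can instead apply the Krein--Šmulian theorem directly to $\ker\tilde f$. For every $r>0$, the set $\ker\tilde f\cap rB_{X^*}=\{t\in rB_{X^*}\setsep f(t/r)=0\}$ is weak$^*$ closed, by continuity of $f$ on the ball. Hence $\ker\tilde f$ is weak$^*$ closed, which means $\tilde f$ is weak$^*$ continuous. The only remaining care is the case $\tilde f=0$, where we simply take $x=0$.
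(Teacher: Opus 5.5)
Your proof is correct and takes the paper's route. Like the paper, you extend $f$ to a linear functional $\tilde f$ on $X^*$, then use that weak$^*$ continuity on every ball $rB_{X^*}$ forces weak$^*$ continuity on $X^*$ (Banach--Dieudonn\'e/Krein--\v{S}mulian), so $\tilde f$ is evaluation at some $x\in X$; your extra contribution is the explicit construction and linearity check for $\tilde f$, which the paper only asserts.
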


\begin{proof}
Obviously, (i)$\implies$(ii). The converse follows from the fact that any $f\in A_{\hom}(B_{X^*},\ef)$ admits a (unique) linear extension $\wt{f}\colon X^*\to \ef$. Since $\wt{f}$ is continuous on any $rB_{X^*}$, $r>0$, $\wt{f}$ is weak* continuous on $X^*$. Hence, there exists $x\in X$ such that $\wt{f}(t)=t(x)$, $t\in X^*$.    
\end{proof}

\begin{lemma}
\label{l:approx-teleman}
Let $X$ be a Banach space and $f\colon B_{X^*}\to \ef$ be a Borel homogeneous strongly affine function. Then for any $\mu\in M^1(B_{X^*})$ there exists a sequence $(x_n)\subset X$ such that $x_n\to f$ $\mu$-almost everywhere and $\norm{x_n}\le \norm{f}$, $n\in\en$.
\end{lemma}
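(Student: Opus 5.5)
The plan is a Hahn--Banach/Mazur argument in $L_1(\mu)$, followed by extraction of an almost everywhere convergent subsequence. We may assume $\norm{f}=1$; if $f=0$ there is nothing to prove. Regard each $x\in X$ as the continuous function $t\mapsto t(x)$ on $B_{X^*}$, so that $\norm{x}=\sup_{t\in B_{X^*}}\abs{t(x)}$. Let $C=\{x\in X\setsep \norm{x}\le 1\}$, viewed as a convex subset of $L_1(\mu)$. It suffices to show that $f$ lies in the $L_1(\mu)$-norm closure of $C$. Then we can choose $x_n\in C$ with $\norm{x_n-f}_{L_1(\mu)}<2^{-n}$. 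By Borel--Cantelli, $x_n\to f$ $\mu$-almost everywhere, and $\norm{x_n}\le 1=\norm{f}$ by construction.

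Suppose, to the contrary, that $f\notin\ov{C}^{L_1(\mu)}$. By Hahn--Banach there are $g\in L_\infty(\mu)$ and $c\in\er$ such that
\[
\Re\int g\,x\di\mu\le c<\Re\int g f\di\mu,\qquad x\in C.
\]
Consider the measure $\nu=g\mu\in M(B_{X^*})$. Since $C$ is $S_{\ef}$-balanced, the left-hand inequality says exactly that $\abs{\nu(x)}\le c$ for all $x\in B_X$. In other words, the restriction of $\nu$ to $X\subset C(B_{X^*})$ has functional norm at most $c$. The aim is now to reach the contradiction $\abs{\nu(f)}\le c$, that is, to show
\[
\abs{\nu(f)}\le\norm{f}\,\norm{\nu|_X}_{X^*}.
\]

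To get this, first replace $\nu$ by $\hom\nu$. Because $f$ and every $x$ are homogeneous, $\nu(f)=\hom\nu(f)$ and $\nu(x)=\hom\nu(x)$; here one uses Fubini with the Haar measure, which is legitimate for bounded Borel $f$. By Fact~\ref{f:homog}, $\hom\nu$ is antihomogeneous. Next, decompose $\hom\nu$ into its real and imaginary parts and these into positive and negative parts, so that $\nu(f)$ becomes a combination of integrals of $f$ against positive measures. Strong affinity then turns each $\int f\di\lambda$, for positive $\lambda$, into $\lambda(B_{X^*})f(r(\lambda/\lambda(B_{X^*})))$. For the function $x$ the same computation gives the same expression with $f$ replaced by $x$.

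The cleanest route is to pass to the point $s\in X^*$ defined by $s(x)=\nu(x)$. Its norm is $\norm{s}=\norm{\nu|_X}\le c$. I then want to show
\[
\nu(f)=\wt f(s),
\]
where $\wt f$ is the linear extension of $f$ to $X^*$, which exists since $f$ is affine and homogeneous, exactly as in the proof of Fact~\ref{f:dualita}. Granting this, $\abs{\nu(f)}=\norm{s}\,\abs{f(s/\norm{s})}\le c\norm{f}=c$, which contradicts $\Re\nu(f)>c$.

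I expect the main obstacle to be the identity $\nu(f)=\wt f(s)$ for a signed (or complex) measure $\nu$. Strong affinity only gives $\int f\di\lambda=f(r(\lambda))$ for probability measures $\lambda$. I would prove the identity by splitting $\nu=\sum_k c_k\lambda_k$, with scalars $c_k$ and at most four probability measures $\lambda_k$. For each $k$ we get $\int f\di\lambda_k=f(r(\lambda_k))=\wt f(r(\lambda_k))$. Linearity of $\wt f$ then gives $\nu(f)=\wt f\bigl(\sum_k c_k r(\lambda_k)\bigr)$. Finally, $\sum_k c_k r(\lambda_k)=s$, since both sides agree when evaluated at every $x\in X$. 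The homogenization step is then not even needed. The only delicate point is that the linear extension $\wt f$ is well defined on all of $X^*$, and this follows from homogeneity together with affinity on the ball.
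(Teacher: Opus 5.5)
Your argument is correct, but it takes a different route from the paper. The paper does no separation of its own. It invokes Teleman's theorem \cite[Theorem 2.3]{teleman-increst}, which on a general compact convex set gives continuous affine, but not homogeneous, functions $f_n$ with $\norm{f_n}\le\norm{f}$ (respectively $\norm{\Re f}$). These converge to $f$ (respectively $\Re f$) almost everywhere with respect to the symmetrized measure $\mu+\sigma_{-1}\mu$, or $\mu+\sigma_{-1}\mu+\sigma_i\mu+\sigma_{-i}\mu$ in the complex case. The paper then passes to the odd parts $g_n(t)=\frac12(f_n(t)-f_n(-t))$, and in the complex case forms $g_n(t)-ig_n(it)$. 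Finally it uses Fact~\ref{f:dualita} to recognize these continuous affine homogeneous functions as elements of $X$. The symmetrized measure is what keeps the symmetrized sequence convergent $\mu$-almost everywhere, and the real and complex cases must be handled separately.

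Your Hahn--Banach argument in $L_1(\mu)$ bypasses all of this. After separation, the decomposition of $\nu=g\mu$ into probability measures together with strong affinity gives $\nu(f)=\wt f(s)$. The dual norm of $s$ is by definition the norm of $\nu|_X$, and $\abs{\wt f(s)}\le\norm{f}\norm{s}$ follows directly from $s/\norm{s}\in B_{X^*}$. So the contradiction is immediate, and the argument works uniformly for $\ef=\er$ and $\ef=\ce$. It is self-contained and uses strong affinity only for the finitely many probability measures $\lambda_k\ll\mu$ produced by the separating functional. The paper's route instead buys brevity by reducing to a known theorem about arbitrary compact convex sets.

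Two small clean-ups. You can delete the homogenization paragraph entirely: applying $\hom$ to the Borel function $f$ would need extra justification, and, as you note, it is not needed. You should also record that $c\ge 0$ (since $0\in C$), and that the case $s=0$ gives $\nu(f)=\wt f(0)=0$.
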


\begin{proof}
If $X$ is a real Banach space, the assertion follows from the proof of \cite[Theorem 2.3(i)]{teleman-increst} (see also \cite[Theorem 4.34]{lmns}). Indeed, we consider a measure $\nu=\mu+\sigma_{-1}\mu$, where $\sigma_{-1}t=-t$, $t\in B_{X^*}$. By the cited theorem, for this measure we can find a sequence $(f_n)$ in $A(B_{X^*},\er)$ such that $\norm{f_n}\le \norm{f}$ and $f_n(t)\to f(t)$ for $t\in B_{X^*}\setminus A$, where $A\subset B_{X^*}$ is Borel with $\nu(A)=0$.
Then
\[
0=(\sigma_{-1}\mu)(A)=\mu(\sigma_{-1}^{-1}(A))=\mu(-A),
\]
and thus 
\[
B=B_{X^*}\setminus (A\cup -A)
\] 
is a Borel set with $\mu(B)=1$. Let $g_n=\frac12(f_n(t)-f_n(-t))$, $t\in B_{X^*}$, $n\in\en$. Then $g_n$ is a continuous odd (i.e., $S_{\er}$-homogeneous) affine function on $B_{X^*}$ with $\norm{g_n}\le \norm{f}$. Moreover, for $t\in B$ we have $t,-t\notin A$, and thus 
\[
g_n(t)=\frac12(f_n(t)-f_n(-t))\to \frac12 (f(t)-f(-t))=f(t).
\]
By Fact~\ref{f:dualita}, there exists a sequence $(x_n)\subset X$ such that $g_n(t)=t(x_n)$, $t\in B_{X^*}$. Then $(x_n)$ is the desired sequence.

Let us now consider the case of a complex Banach space.
Let $f=f_1+if_2$, where $f_1=\Re f$ and $f_2=\Im f$. Then $f_1, f_2$  are strongly affine real functions with $f_1(0)=f_2(0)=0$ and $f_2(t)=-f_1(it)$, $t\in B_{X^*}$. Given $\mu\in M^1(B_{X^*})$, let $\nu=\mu+\sigma_{-1}\mu+\sigma_i\mu+\sigma_{-i}\mu$ (here $\sigma_\lambda(t)=\lambda t$, $t\in B_{X^*}$, $\lambda\in \{-1,i,-i\}$). By \cite[Theorem 2.3]{teleman-increst}, there exists a sequence $(f_n)\subset A(B_{X^*},\er)$ such that $f_n(t)\to f_1(t)$ for $t\in B_{X^*}\setminus A$, where $A\subset B_{X^*}$ is a Borel set with $\nu(A)=0$, and $\norm{f_n}\le \norm{f_1}$.

Then 
\[
0=(\sigma_i\mu)(A)=\mu(\sigma_i^{-1}(A))=\mu(i^{-1}A),
\]
\[
0=(\sigma_{-1}\mu)(A)=\mu(\sigma_{-1}^{-1}(A))=\mu(-A).
\]
and
\[
0=(\sigma_{-i}\mu)(A)=\mu(\sigma_{-i}^{-1}(A))=\mu(iA).
\]
Let a Borel set $B$ be defined as
\[
B=B_{X^*}\setminus (A\cup -A\cup i^{-1}A\cup iA),
\]
then $\mu(B)=1$. 

We consider the function $g_n(t)=\frac12(f_n(t)-f_n(-t))$, $t\in B_{X^*}$, $n\in\en$. Then $g_n\colon B_{X^*}\to \er$ are affine, continuous, and odd.
Then $\norm{g_n}\le \norm{f_n}\le \norm{f_1}$ and for $t\in B$, $-t, t\notin A$, and thus  
\[
g_n(t)=\frac12(f_n(t)-f_n(-t))\to \frac12 (f_1(t)-f_1(-t))=f_1(t).
\]
Furthermore, if $t\in B$, then $it, -it\notin A$, and thus 
\[
g_n(it)=\frac12(f_n(t)-f_n(-it))\to \frac12 (f_1(it)-f_1(-it))=f_1(it).
\]
Hence, it follows that the functions $a_n(t)=g_n(t)-ig_n(it)$, $t\in B_{X^*}$, $n\in\en$, are affine, homogeneous, continuous and, for $t\in B$ satisfy
\[
a_n(t)=g_n(t)-ig_n(it)\to f_1(t)-if_1(it)=f_1(t)+i f_2(t)=f(t).
\]
By Fact~\ref{f:dualita}, there exist elements $x_n\in X$, $n\in\en$, such that $a_n(t)=t(x_n)$, $t\in B_{X^*}$. Finally, for $n\in\en$, we have by the linearity of $a_n$ 
\[
\norm{x_n}=\sup_{t\in B_{X^*}}\abs{a_n(t)}=\sup_{t\in B_{X^*}}\abs{g_n(t)}\le \norm{f_1}\le \norm{f}.
\]

\end{proof}

A Banach space $X$ is called and \emph{$L_1$-predual} if its dual $X^*$ is isometric to the space $L_1(\Omega, \Sigma,\lambda)$ for some measure space $(\Omega,\Sigma,\lambda)$.

Let $X$ be an $L_1$-predual. Then for each $t\in B_{X^*}$ and maximal probability measures $\mu,\nu\in M_t(B_{X^*})$, we have $\hom\mu=\hom\nu$ (see \cite[Chapter 7, \S 21, Theorem 7]{lacey} for the real case and \cite[Chapter 7, \S 23, Theorem 5]{lacey} for the complex case). We denote this uniquely defined measure as $D_t$.
Properties of the mapping $t\mapsto D_t$ are summarized in the following lemma.

\begin{lemma}
\label{l:decko-andproper}
Let $X$ be an $L_1$-predual. Then the following assertions hold:
\begin{enumerate}[(a)]
 \item For each $f\in C(B_{X^*})$, the function $Df(t)=\int_{B_{X^*}} f\di D_t$, $t\in B_{X^*}$, is Borel on $B_{X^*}$, homogeneous and strongly affine.
    \item The mapping $t\mapsto D_t$ is an affine homogeneous mapping of $B_{X^*}$ into $M_{\bnd,\ahom}(B_{X^*})$ such that, for each $t\in B_{X^*}$, $\int_{B_{X^*}} a\di D_t=a(t)$, $a\in X$, and  $\norm{D_t}\le 1$.
     \item For each $f\in C(B_{X^*})$, the function $DDf(t)=\int_{B_{X^*}} Df\di D_t$, $t\in B_{X^*}$, satisfies $DDf=Df$.
    \item Let $D\colon M(B_{X^*})\to M(B_{X^*})$ be defined as $D\mu(f)=\int_{B_{X^*}} Df\di\mu$, $f\in C(B_{X^*})$. Then $D$ is a linear mapping of $M(B_{X^*})$ into $M_{\ahom}(B_{X^*})$.
    \item For any $\mu\in M(B_{X^*})$, the measure $D\mu$ is the unique element of $M_{\bnd,\ahom}(B_{X^*})$ with $\int_{B_{X^*}} x\di\mu=\int_{B_{X^*}} x\di D\mu$ for each $x\in X$.
    \item The mapping $D$ is a projection of norm $1$ of $M(B_{X^*})$ onto $M_{\bnd,\ahom}(B_{X^*})$.
\end{enumerate}
\end{lemma}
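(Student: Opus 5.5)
We take the defining property of $D_t$ as the starting point: $D_t=\hom\mu$ for any maximal $\mu\in M_t(B_{X^*})$, which by the $L_1$-predual theorem cited from \cite{lacey} does not depend on the choice of $\mu$. Assertions (a)--(f) are then proved in order, each resting on the previous ones and on the standard facts about simplices and $L_1$-preduals.

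\textbf{Assertion (b).} Homogeneity of $t\mapsto D_t$ holds because if $\mu\in M_t$ is maximal, then its image under $s\mapsto\alpha s$ is maximal and represents $\alpha t$. The $\hom$ of this image is $\alpha D_t$, up to the convention $\ov\alpha$ matching the antihomogeneity of Fact~\ref{f:homog}. Affinity follows from uniqueness: if $\mu_1,\mu_2$ are maximal representing measures of $t_1,t_2$, then $\lambda\mu_1+(1-\lambda)\mu_2$ is maximal and represents $\lambda t_1+(1-\lambda)t_2$. Boundariness of $D_t$ is \cite[Chapter 7, \S 23, Lemma 10]{lacey}, antihomogeneity is Fact~\ref{f:homog}, and $\norm{D_t}\le\norm{\mu}=1$. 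Finally, $\int a\di D_t=\int\hom a\di\mu=\int a\di\mu=a(t)$ for $a\in X$, since $a$ is affine and homogeneous.

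\textbf{Assertion (a).} Homogeneity and affinity of $Df$ follow from (b). For Borel measurability and strong affinity I would use the Lazer--Lindenstrauss/Effros description of $L_1$-preduals. For real $f$, the function $\hom f$ has an upper envelope, and $Df=(\hom f)^*$ on a suitable symmetrization, i.e.\ $Df$ is a difference of upper envelopes of convex continuous functions. Such envelopes are upper semicontinuous, hence Borel, and upper envelopes of convex continuous functions on a simplex-like setting are strongly affine. Transferring this to $B_{X^*}$ is the main obstacle. I would reduce to the known case by representing $X$ through $A(K)$ with $K$ a simplex, using $\theta$, $H$ and Lemma~\ref{l:prenos}, where for simplices Borelness and strong affinity of $t\mapsto\delta_t(f)$ are classical (see \cite{kal-spu-ind}).

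\textbf{Assertions (c)--(f).} For (c), $DDf(t)=\int Df\di D_t$. Since $Df$ is strongly affine and homogeneous by (a), Lemma~\ref{l:approx-teleman} gives $x_n\in X$ with $\norm{x_n}\le\norm{Df}$ and $x_n\to Df$ almost everywhere with respect to $\abs{D_t}$ and $\abs{\mu}$. By dominated convergence and (b), $\int Df\di D_t=\lim\int x_n\di D_t=\lim x_n(t)=Df(t)$.

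For (d), linearity of $\mu\mapsto D\mu$ is clear, and antihomogeneity follows from homogeneity of $Df$ in the form $D(f\circ\sigma_\alpha)=\alpha^{\pm1}Df$.

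For (e), we have $\int x\di D\mu=\int Dx\di\mu=\int x\di\mu$, because $Dx=x$ by (b). Boundariness holds because $D\mu=\int D_t\di\mu(t)$ is an integral of boundary measures, so it satisfies the Mokobodzki test. For uniqueness, an antihomogeneous boundary $\nu$ annihilating $X$ must be $0$. This is the characterization of $L_1$-preduals: boundary antihomogeneous measures are determined by their action on $X$ \cite[Chapter 7, \S 23]{lacey}.

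For (f), if $\mu$ is boundary and antihomogeneous, then uniqueness in (e) gives $D\mu=\mu$. The bound $\norm{D\mu}\le\norm{\mu}$ follows from $\norm{D_t}\le1$.
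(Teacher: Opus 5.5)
Your proposal has a genuine gap: the Borel measurability and strong affinity in (a) are never established. Everything downstream depends on this. Part (c) needs $Df$ strongly affine to apply Lemma~\ref{l:approx-teleman}, and (d)--(f) need $Df$ bounded Borel for $D\mu$ to be a measure at all.

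The reduction you propose is not available. The maps $\phi$, $\theta$, $H$ and Lemma~\ref{l:prenos} exist only when $X=A(K)$, and a general $L_1$-predual is not of this form. For $X=A(K)$ the weak$^*$ closure of $\ext B_{X^*}$ lies in the compact set $\TT\cdot\phi(K)$, which misses $0$. For $X=c_0$, by contrast, the extreme points $\pm e_n$ of $B_{\ell_1}$ accumulate weak$^*$ at $0$.

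The formula $Df=(\hom f)^*$ also cannot hold on $B_{X^*}$ itself. For real $X$ we have $A(B_{X^*},\er)=X+\er$. For an odd $h$, the condition $x+c\ge h$, evaluated at $s$ and $-s$, forces $c\ge\abs{h-x}$. Hence $(\hom f)^*(0)=\dist(\hom f,X)>0$ whenever $\hom f\notin X$. On the other hand $Df(0)=0$, since $D_0=D_{-0}=-D_0$. Because $B_{X^*}$ is not a simplex, the classical fact that $g^*$ is affine and upper semicontinuous for convex $g$ is unavailable, so a genuinely new argument is needed here. The paper takes it from \cite[Lemma 4.12]{petracek-spurny}, where $Df$ is shown to be strongly affine and a uniform limit of linear combinations of upper semicontinuous functions.

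There are two smaller points, both easy to repair once (a) is available.

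In (e), the step ``$D\mu=\int D_t\di\mu(t)$ is an integral of boundary measures, hence boundary'' is not automatic when $B_{X^*}$ is nonmetrizable. The Mokobodzki test would require controlling $\abs{D\mu}$ by the $\abs{D_t}$ and evaluating at the upper semicontinuous function $g^*-g$. That means exchanging the $\di\mu(t)$-integral with an infimum over a net of continuous majorants, which is not justified. The paper avoids this by writing $\mu$ as a combination of four probability measures $\mu_i$ with barycenters $t_i$ and checking that $D\mu_i=D_{t_i}$. By strong affinity, $D\mu_i(f)=\mu_i(Df)=Df(t_i)$, so $D\mu$ is a finite combination of boundary measures.

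In (c), you need $x_n\to Df$ also at the point $t$ to conclude $\lim x_n(t)=Df(t)$. So the convergence must be almost everywhere with respect to $\abs{D_t}+\ep_t$.

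Your direct proof of (b), via images and convex combinations of maximal measures, is fine. So is your proof of (d) via $D(f\circ\sigma_\alpha)=\alpha Df$ with $\sigma_\alpha(s)=\alpha s$, an identity that comes from the antihomogeneity of each $D_t$.
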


\begin{proof}
(a) See \cite[Lemma 4.12]{petracek-spurny} and its proof. It is shown in \cite[Lemma 4.12]{petracek-spurny} that the function $Df$ is a uniform limit of linear combinations of upper semicontinuous functions on $B_{X^*}$, and thus $Df$ is Borel. 

(b) The mapping $t\mapsto D_t$ is affine and homogeneous by (a). Obviously, $D_t$ is antihomogeneous, $\norm{D_t}\le 1$ and $\int a \di D_t=a(t)$, $a\in X$,  for each $t\in B_{X^*}$. Since $D_t=\hom \mu$ for some maximal measure $\mu\in M_t(B_{X^*})$, $D_t$ is boundary by \cite[Chapter 7, \S 23, Lemma 10]{lacey}. 

(c) Let $t\in B_{X^*}$ and $f\in C(B_{X^*})$ be given. Using Lemma~\ref{l:approx-teleman} we find a bounded sequence $(a_n)\subset X$ such that $a_n\to Df$ $\abs{D_t}+\ep_t$-almost everywhere. Then 
\[
DDf(t)=\int Df\di D_t=\lim_{n\to\infty}\int a_n \di D_t=\lim_{n\to\infty} a_n(t)=Df(t).
\]

(d) The mapping $D$ is obviously linear. Further, 
\[
D(\hom f)(t)=D_t(\hom f)=\hom D_t(f)=D_t(f)=Df(t),\quad f\in C(B_{X^*}),
\]
and thus 
\[
(\hom D\mu)(f)=(D\mu)(\hom f)=\int D(\hom f)\di \mu=\int Df\di\mu=(D\mu)(f),\quad f\in C(B_{X^*}).
\]
Hence $D\mu=\hom D\mu$ is an antihomogeneous measure. 

(e) Let $\mu\in M(B_{X^*})$ be given. We write $\mu=a_1\mu_1-a_2\mu_2+i(a_3\mu_3-a_4\mu_4)$, where $a_1,\dots, a_4\ge 0$ and $\mu_1\dots,\mu_4\in M^1(B_{X^*})$. Let $t_i=r(\mu_i)$, $i=1,\dots,4$, be the barycenters. Then
\[
\int x\di\mu=a_1x(t_1)-a_2x(t_2)+i(a_3x(t_3)-a_4x(t_4)),\quad x\in X.
\]
We claim that $D\mu_i=D_{t_i}$, $i=1,\dots,4$.

Indeed, let $f\in C(B_{X^*})$ be given and let $(a_n)\subset X$ be a bounded sequence with $a_n\to Df$ $(\mu_i+\ep_{t_i})$-almost everywhere. Then
\[
D\mu_i(f)=\mu_i(Df)=\lim_{n\to\infty} \mu_i(a_n)=\lim_{n\to\infty} a_n(t_i)=Df(t_i)=D_{t_i}(f).
\]
Thus $D\mu_i=D_{t_i}$, $i=1,\dots, 4$.

Hence
\[
D\mu=a_1D_{t_1}-a_2D_{t_2}+i(a_3D_{t_3}-a_4D_{t_4})\in M_{\bnd,\ahom}(B_{X^*}).
\]
Further, 
\[
D\mu(x)=a_1x(t_1)-a_2x(t_2)+i(a_3x(t_3)-a_4x(t_4))=\mu(x).
\]
Hence $D\mu$ satisfies the required condition.

Assuming $\nu\in M_{\bnd,\ahom}(B_{X^*})$ is another measure with $\nu(x)=\mu(x)$, $x\in X$, then \cite[Theorem 2]{saabs-illinois} implies $\nu=D\mu$.

(f) The mapping $D$ is of norm at most $1$ and maps $M(B_{X^*})$ into the space $M_{\bnd,\ahom}(B_{X^*})$. If $\nu\in M_{\bnd,\ahom}(B_{X^*})$, then 
\[
D\nu(x)=\nu(Dx)=\nu(x),\quad x\in X,
\]
implies that $D\nu=\nu$ (here we use again \cite[Theorem 2]{saabs-illinois}). Hence $D$ is a projection of $M(B_{X^*})$ onto $M_{\bnd,\ahom}(B_{X^*})$.
\end{proof}

If $X$ and $E$ are Banach spaces, we denote by $\inxe$ the \emph{injective tensor product} of $X$ and $E$, see \cite[Chapter 3]{ryan-tensor}.

\begin{fact}
    \label{f:predual-injective}
    \begin{enumerate}[(a)]
    \item If $K$ is a simplex, the space $A(K)\wh{\otimes}_\ep E$, where $E$ is a Banach space, can be identified with $A(K,E)$ by the mapping $I\colon A(K)\wh{\otimes}_\ep E\to A(K,E)$ given by
    \[
    I(a\otimes x)(t)=a(t)x,\quad t\in K, a\in A(K), x\in E.
    \]
    \item   Let $X$ be an $L_1$-predual and $E$ be a Banach space. Then $X\wh{\otimes}_\ep E$ can be identified with the space $A_{\hom}(B_{X^*},E)$ via the isometric mapping $I\colon X\wh{\otimes}_\ep E\to A_{\hom}(B_{X^*},E)$ that satisfies
    \[
I(z\otimes x)(t)=t(z)x,\quad t\in B_{X^*}, z\in X, x\in E.
    \]
    \end{enumerate}
\end{fact}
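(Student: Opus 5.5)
We need to prove Fact (a) and (b): the injective tensor product is identified with affine continuous functions and with homogeneous affine continuous functions. The plan is to use the standard description of $Y\wh{\otimes}_\ep E$ as a closed subspace of $C(B_{Y^*},E)$, identify its range, and derive (a) as a special case of (b).

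\textbf{Step 1: isometry on the algebraic tensor product.} By \cite[Chapter 3]{ryan-tensor}, the injective norm of $u=\sum_{i} z_i\otimes x_i\in X\otimes E$ is
\[
\norm{u}_\ep=\sup\Bigl\{\Bigl|\sum_i t(z_i)x^*(x_i)\Bigr|\setsep t\in B_{X^*},x^*\in B_{E^*}\Bigr\}=\sup_{t\in B_{X^*}}\Bigl\|\sum_i t(z_i)x_i\Bigr\|_E .
\]
Hence the map $u\mapsto\bigl(t\mapsto \sum_i t(z_i)x_i\bigr)$ is a well-defined linear isometry of $X\otimes_\ep E$ into $C(B_{X^*},E)$ with the weak$^*$ topology on $B_{X^*}$. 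Its image consists of affine, continuous and homogeneous functions. Because $A_{\hom}(B_{X^*},E)$ is closed in $C(B_{X^*},E)$, the map extends to an isometry $I$ of $\inxe$ into $A_{\hom}(B_{X^*},E)$. This step uses no hypothesis on $X$.

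\textbf{Step 2: surjectivity, the main obstacle.} Let $f\in A_{\hom}(B_{X^*},E)$ and $\ep>0$. We need $z_i\in X$ and $x_i\in E$ with $\norm{f-\sum_i z_i(\cdot)x_i}_\infty<\ep$.

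First take a finite weak$^*$-continuous partition of unity on $B_{X^*}$ together with points $s_j$ such that $\norm{f-\sum_j \psi_j f(s_j)}<\ep$. This only gives continuous, not affine, approximants. To make them affine and homogeneous we use the $L_1$-predual structure through the following route.

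Since $f(B_{X^*})$ is compact, choose a finite-dimensional subspace $F\subset E$ and a projection $P$ onto $F$. The norm of $P$ may be large, so instead approximate with the finite $\ep$-net $\{y_1,\dots,y_m\}$ of $f(B_{X^*})$. The key tool is that $X$ has the metric approximation property; more precisely, $X^{**}$ is an injective space $C(\Omega)$ and $X$ is a Lindenstrauss space. Lindenstrauss' theorem gives finite-rank contractions $R_\alpha\colon X\to X$ with $R_\alpha\to \mathrm{id}$ pointwise. For such $R_\alpha$, the composition $f\circ R_\alpha^*$ equals $\sum_k t(z_k)f(e_k^*)$ for a finite basis of the range of $R_\alpha^*$. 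This uses that $f$, extended linearly to $X^*$ as in Fact~\ref{f:dualita}, is linear on a finite-dimensional space, so $f\circ R_\alpha^*$ lies in the image of $I$.

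Convergence $f\circ R_\alpha^*\to f$ uniformly then follows from weak$^*$ convergence $R_\alpha^* t\to t$ uniformly on $B_{X^*}$ (equicontinuity of the contractions together with pointwise convergence on $X$) and uniform continuity of $f$ on the weak$^*$ compact ball. This is a standard compactness argument. Checking the uniformity carefully, and the weak$^*$-continuity of $R_\alpha^*$ when $R_\alpha=\sum z_k^*\otimes z_k$ has $z_k^*\in X^*$, is the delicate point. The linear extension of $f$ must also be handled correctly.

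\textbf{Step 3: part (a).} For a simplex $K$, $X=A(K)$ is an $L_1$-predual. The map $\phi$ is an affine homeomorphism of $K$ onto the face $\phi(K)$, and each $f\in A_{\hom}(B_{X^*},E)$ corresponds to $f\circ\phi\in A(K,E)$. Conversely, $g\in A(K,E)$ extends to $\theta(\alpha,t)\mapsto\alpha g(t)$ on $\co(\TT\cdot\phi(K))=B_{X^*}$, with affinity coming from the simplex structure. This composition with $\phi$ transfers (b) to (a) and matches the formula $I(a\otimes x)(t)=a(t)x$. Alternatively, (a) can be proved directly by the same approximation argument applied to $A(K)$, which has the metric approximation property.
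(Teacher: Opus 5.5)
Your proof is correct, but it reaches surjectivity by a different mechanism than the paper.

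The paper gets density of the algebraic tensor product from selection theorems. For $f$ in the target space and $\ep>0$, the affine lower semicontinuous map $t\mapsto f(t)+\ep B_E$ has a selection in the algebraic tensor product: Lazar's lemma handles simplices in (a), and Lazar--Lindenstrauss and Olsen handle (b). The isometry in (a) is proved separately by locating suprema at extreme points of $B_{A(K)^*}$, which lie in $\TT\cdot K$. Parts (a) and (b) are treated in parallel rather than (a) being deduced from (b).

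You use only the metric approximation property of $L_1$-preduals. Let $\tilde f$ be the linear extension of $f$ to $X^*$ and $R=\sum_k e_k^*\otimes z_k$ a finite-rank contraction on $X$. Then $f\circ R^*=I\bigl(\sum_k z_k\otimes \tilde f(e_k^*)\bigr)$. Uniform continuity of $f$ on the weak$^*$ compact ball supplies finitely many $x_j\in X$ and some $\delta>0$. As soon as $\norm{Rx_j-x_j}<\delta$, you get $\sup_{t\in B_{X^*}}\norm{f(R^*t)-f(t)}<\ep$, because $\abs{(R^*t-t)(x_j)}=\abs{t(Rx_j-x_j)}\le\norm{Rx_j-x_j}$ for every $t\in B_{X^*}$. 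No equicontinuity is needed, and the false starts with partitions of unity and projections can be dropped.

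This is essentially the classical identification of $\inxe$ with the weak$^*$-to-weak continuous compact operators $X^*\to E$ under the approximation property. Your route is more elementary and works for any $X$ with the bounded approximation property. The paper's route exploits the finer selection structure specific to simplices and $L_1$-preduals, and it handles density in (a) directly on $K$.

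Some details need tightening.

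\begin{enumerate}
\item Justify the metric approximation property concretely, not by the vague appeal to injectivity of $X^{**}$. For example, by \cite[Chapter 7, \S 23, Theorem 2]{lacey} a finite set lies within $\ep$ of the range of a $(1+\ep)$-isomorphic copy of $\ell_\infty^n$. That copy is $(1+\ep)^2$-complemented because $\ell_\infty^n$ is $1$-injective, and rescaling gives contractions.
\item In Step 3, the affine extension $\hat g$ of $\alpha\phi(t)\mapsto\alpha g(t)$ to $B_{X^*}$ is well defined not because of the simplex structure. It is well defined because $x^*\circ g\in A(K)=X$ for each $x^*\in E^*$, so $x^*(\hat g(s))=s(x^*\circ g)$. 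The simplex hypothesis enters only through the fact that $A(K)$ is an $L_1$-predual.
\item The weak$^*$-to-norm continuity of $\hat g$ needs an argument. The set $\{x^*\circ g\setsep x^*\in B_{E^*}\}$ is norm compact in $A(K)$ since $g(K)$ is compact, and bounded weak$^*$ convergent nets converge uniformly on norm compact sets.
\item The map $f\mapsto f\circ\phi$ is isometric because $\sup_{B_{X^*}}\abs{x^*\circ f}=\norm{x^*\circ f\circ\phi}_{A(K)}$, by Fact~\ref{f:dualita}.
\end{enumerate}
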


\begin{proof}
 (a) The assertion follows from \cite[Section 3.2]{ryan-tensor} and \cite[Lemma 2.4]{lazar-sel}. Indeed, 
 if $u\in A(K)\otimes E$ is represented as $u=\sum_{i=1}^n a_i\otimes x_i$, where $a_1,\dots, a_n\in A(K)$ and $x_1,\dots, x_n\in E$, then the function $Iu(t)=\sum_{i=1}^n a_i(t)x_i$, $t\in K$, satisfies
 \[
 \norm{u}_{A(K)\wh{\otimes}_\ep E}=\sup_{s\in B_{A(K)^*}}\sup_{x^*\in B_{E^*}}\abs{\sum_{i=1}^n s(a_i)x^*(x_i)}
 \]
 and 
 \[
 \norm{Iu}_{A(K,E)}=\sup_{t\in K}\sup_{x^*\in B_{X^*}} \abs{\sum_{i=1}^n a_i(t)x^*(x_i)}
 \]
 Clearly, $\norm{Iu}_{A(K,E)}\le \norm{u}_{A(K)\wh{\otimes}_\ep E}$.

 On the other hand, for a fixed element $x^*\in B_{X^*}$, the function 
 \[
 s\in B_{A(K)^*}\mapsto \abs{\sum_{i=1}^n s(a_i)x^*(x_i)}
 \]
 is convex and continuous. Hence, it attains its supremum at some extreme point of $B_{A(K)^*}$. 
 Since $\ext B_{A(K)^*}\subset S_\ef\cdot K$, it follows that also $\norm{Iu}_{A(K,E)}\ge \norm{u}_{A(K)\wh{\otimes}_\ep E}$, and hence $I$ is an isometric mapping.

 It remains to show that the image of $I$ is dense in $A(K,E)$. To this end, let $f\in A(K,E)$ and $\ep>0$ be given. Then  $\Phi(t)=f(t)+\ep B_E$ is an affine lower semicontinuous multivalued mapping with nonempty closed convex values from a simplex $K$ to a Banach space $E$. 
 
 (A multivalued mapping $\Phi\colon L\to E$ on a  compact convex set $L$ is \emph{affine} if, for $t_1,t_2\in L$ and $\alpha\in (0,1)$ we have 
 \[
 \alpha\Phi(t_1)+(1-\alpha)\Phi(t_2)\subset \Phi(\alpha t_1+(1-\alpha)t_2).
 \]
It is \emph{lower semicontinuous} if, for any $U\subset E$ open, the set
\[
\Phi^{-1}(U)=\{t\in L\setsep \Phi(t)\cap U\neq\emptyset\}
\]
 is open in $L$.)
 
  Hence, by the mentioned \cite[Lemma 2.4]{lazar-sel}, $\Phi$ admits a selection $\phi=\sum_{i=1}^n f_i\otimes x_i\in A(K)\otimes E$, i.e, $\phi(t)$ is an element of $\Phi(t)$. Thus, any element of $A(K,E)$ can be uniformly approximated by elements of $A(K)\otimes E$. 

    (b) The assertion follows from \cite[Lemma 2.2, p.\,173]{lali} (the real case) and \cite[the proof of Theorem 4.2]{olsen-sel} (the complex case) as in the previous case.     
\end{proof}

It follows from \cite[Theorem 2]{saabs-illinois}, that, for an $L_1$-predual $X$ and a Banach space $E$, the dual $(\inxe)^*$ of $\inxe$ can be identified with $M_{\bnd,\ahom}(B_{X^*},E^*)$. We mention the following fact.

\begin{fact}
    \label{f:weakly}
    The space $M_{\bnd,\ahom}(B_{X^*},E^*)$ is weakly closed in $M(B_{X^*},E^*)$.
\end{fact}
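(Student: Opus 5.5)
The plan is to write $M_{\bnd,\ahom}(B_{X^*},E^*)$ as an intersection of two weakly closed subspaces of $M(B_{X^*},E^*)$. Since both sets are linear subspaces, it suffices (Mazur) to show that each is norm closed. Antihomogeneity is the easy part. For fixed $B\in\Borel(B_{X^*})$, $\alpha\in\TT$ and $x\in E$, the map $\mu\mapsto \mu(\alpha B)x-\ov{\alpha}\mu(B)x$ is linear and bounded on $M(B_{X^*},E^*)$, since $\abs{\mu(A)x}\le \norm{\mu}\norm{x}$. Hence $M_{\ahom}(B_{X^*},E^*)$ is an intersection of kernels of continuous functionals, so it is norm closed and even weakly closed.

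The boundary part is the main step, and I will reduce it to the scalar case via Lemma~\ref{l:prenos}(a). That lemma is stated for a compact convex $K$, but its proof uses only the Mokobodzki-type description of maximality, so it works verbatim on $K=B_{X^*}$. Thus $\mu$ is boundary if and only if $\mu_x$ is boundary for every $x\in E$. The map $\mu\mapsto\mu_x$ from $M(B_{X^*},E^*)$ to $M(B_{X^*})$ is linear and bounded with norm at most $\norm{x}$, so it is norm continuous. It therefore suffices to show that the scalar boundary measures $M_{\bnd}(B_{X^*})$ form a norm closed linear subspace of $M(B_{X^*})$.

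To get that, I will use the Mokobodzki test. A measure $\nu$ is boundary if and only if $\abs{\nu}(g^*-g)=0$ for every convex $g\in C(B_{X^*},\er)$; equivalently, $\abs{\nu}$ vanishes on each set $\{g^*>g\}$, where $g^*-g\ge 0$ is a bounded Borel (upper semicontinuous) function. This condition is stable under sums and scalar multiples because $\abs{\nu_1+\nu_2}\le\abs{\nu_1}+\abs{\nu_2}$, so $M_{\bnd}(B_{X^*})$ is a linear subspace. It is stable under norm limits because $\abs{\abs{\nu_n}(A)-\abs{\nu}(A)}\le\norm{\nu_n-\nu}$. Consequently $M_{\bnd}(B_{X^*})$ is a norm closed subspace, hence weakly closed.

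Putting the pieces together, $M_{\bnd}(B_{X^*},E^*)=\bigcap_{x\in E}\{\mu\setsep \mu_x\in M_{\bnd}(B_{X^*})\}$ is an intersection of preimages of closed subspaces under continuous linear maps, so it is weakly closed. Intersecting with $M_{\ahom}(B_{X^*},E^*)$ gives the claim. The only delicate point is transferring Lemma~\ref{l:prenos}(a) to $B_{X^*}$; if that fails, I would instead argue directly that a norm limit of boundary vector measures has variation dominated by $\sum_n 2^{-n}\abs{\mu_n}/\norm{\mu_n}$ plus a small remainder, and then use that maximal measures form a norm closed cone.
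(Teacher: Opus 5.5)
Your proof is correct and follows essentially the paper's route. Like the paper, you split off the antihomogeneity condition, reduce boundaryness to the scalar measures $\mu_x$ via Lemma~\ref{l:prenos}(a), and characterize scalar boundary measures as those whose variation vanishes on the sets $\{g^*>g\}$; the lemma applies directly, since $B_{X^*}$ is itself a compact convex set. The only difference is the closing step: you show that the scalar boundary measures form a norm closed subspace and invoke Mazur, whereas the paper passes to the limit along a weakly convergent net, using that $\nu\mapsto\nu(A)$ is weakly continuous for each Borel $A\subset\{g^*>g\}$, so it needs no convexity argument.
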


\begin{proof}
First we observe that the space $ M_{\ahom}(B_{X^*},E^*)$ is even weak$^*$ closed.
Thus it is enough to check that $M_{\bnd}(B_{X^*},E^*)$ is weakly closed. If $(\mu_i)_{i\in I}$ is a net of boundary measure in $M_{\bnd}(B_{X^*},E^*)$ converging weakly to $\mu\in M(B_{X^*},E^*)$, for an element $x\in E$ we have $(\mu_i)_x\to \mu_x$ weakly. If $g\in C(B_{X^*},\er)$ is a given function, $B=\{t\in B_{X^*}\setsep g^*(t)=g(t)\}$ and $A\subset B_{X^*}\setminus B$ is Borel, then $(\mu_i)_x(A)=0$. By weak convergence, $\mu_x(A)=0$, which yields that $\mu_x$ is carried by $B$. Hence $\mu_x\in M_{\bnd}(B_{X^*})$. Since $x\in E$ is arbitrary, $\mu\in M_{\bnd}(B_{X^*},E^*)$.
\end{proof}

\section{The representing measure for operators on $X\wh{\otimes}_\ep E$}
\label{s:sboake}

We recall a construction of the representing measure for operators on $A(K,E)$, where $K$ is a simplex, and $\inxe$, where $X$ is an $L_1$-predual.

\begin{definition}\label{d:repremeasure}
Let $K$ be a simplex and $E,F$ be Banach spaces. If $T\colon A(K,E)\to F$ is an operator, its \emph{representing measure} is a finitely additive mapping $G$ defined on $\Borel(K)$ with values in $L(E,F^{**})$ such that
\begin{itemize}
\item for each $y^*\in F^*$, the measure $G_{y^*}\colon \Borel(K)\to E^*$ 
is a regular Borel countably additive boundary $E^*$-valued vector measure on $K$, i.e., $G_{y^*}\in M_{\bnd}(K,E^*)$;
\item the semivariation of $G$ 
is finite, i.e., $\norm{G}(K)<\infty$;
\item the mapping $y^*\to G_{y^*}|_{A(K,E)}$ is weak$^*$-to-weak$^*$ continuous from $F^*$ to $A(K,E)^*$;
\item $T^*y^*=G_{y^*}$ on $A(K,E)$ for all $y^*\in F^*$; and
\item $\norm{T}=\norm{G}(K).$
\end{itemize}
\end{definition}

The following result is from \cite{saab-saab}.

\begin{thm}\label{t:saab-saab}
Let $K$ be a simplex and $E,F$ be Banach spaces.
Let $T\colon A(K,E)\to F$ be an operator, then $T$ has a unique representing measure $G$.
\end{thm}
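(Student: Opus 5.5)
The plan is to obtain $G$ by composing a norm-one extension of $T$ to $C(K,E)$ with a projection onto boundary measures, and then to invoke Theorem~\ref{t:riesz-singer}. By the Hahn--Banach theorem applied through Singer's identification, every functional on $A(K,E)$ extends with the same norm to $C(K,E)$. We therefore cannot simply extend $T$ itself; instead we work on the dual side. For $y^*\in F^*$ we need a canonical boundary measure $\mu_{y^*}\in M_{\bnd}(K,E^*)$ that represents $T^*y^*$ on $A(K,E)$ and satisfies $\norm{\mu_{y^*}}=\norm{T^*y^*}$. Uniqueness of such a measure follows from the simplex structure: a boundary measure annihilating $A(K,E)$ vanishes. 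To see this, use Fact~\ref{f:predual-injective}(a) to test against the functions $a\otimes x$. Each $\mu_x$ is then a boundary scalar measure annihilating $A(K)$, by Lemma~\ref{l:prenos}(a), and so $\mu_x=0$ by the uniqueness of maximal representing measures on a simplex.

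For existence, take any norm-preserving extension $\nu\in M(K,E^*)$ of $T^*y^*$ and replace it by $D\nu$, where $D\nu(f)$ is defined by applying the dilation $D$ from Subsection~\ref{ssc:ccs} coordinatewise. Concretely, $(D\nu)_x=D(\nu_x)$ for $x\in E$. One must check that this defines a countably additive regular $E^*$-valued measure of variation at most $\abs{\nu}(K)$. This should follow from $\abs{D\nu}\le D\abs{\nu}$, proved by testing on finite partitions. It agrees with $\nu$ on $A(K,E)$ because $Da=a$ on $K$ for affine continuous $a$, and it is boundary by Lemma~\ref{l:prenos}(a). Set $\mu_{y^*}=D\nu$. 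By uniqueness this measure does not depend on the choice of $\nu$, so $y^*\mapsto\mu_{y^*}$ is linear.

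Next we define $G$. For $B\in\Borel(K)$ and $x\in E$, the functional $y^*\mapsto \mu_{y^*}(B)x$ is linear and bounded by $\norm{T}\norm{y^*}\norm{x}$, so it lies in $F^{**}$; we call it $G(B)x$. Finite additivity of $G$ and the identity $G_{y^*}=\mu_{y^*}$ are immediate. By \eqref{eq:semivar},
\[
\norm{G}(K)=\sup_{y^*\in B_{F^*}}\abs{\mu_{y^*}}(K)\le\norm{T}.
\]
The reverse inequality comes from $\norm{T^*y^*}\le\abs{\mu_{y^*}}(K)$. The identity $T^*y^*=G_{y^*}$ on $A(K,E)$ holds by construction. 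The weak$^*$-to-weak$^*$ continuity of $y^*\mapsto G_{y^*}|_{A(K,E)}$ is just the weak$^*$ continuity of $T^*$.

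The main obstacle is making $D\nu$ rigorous for vector measures. This requires showing that $B\mapsto (x\mapsto D(\nu_x)(B))$ is $E^*$-valued, countably additive, and of bounded variation. My first attempt would use the Borel kernel $t\mapsto\delta_t$: define $D\nu(B)=\int_K \delta_t(B)\di\nu(t)$ as an $E^*$-valued integral of a bounded Borel scalar function. Measurability of $t\mapsto\delta_t(B)$ is the delicate point. For non-metrizable simplices it is only known for $B$ in a suitable class, so I would first handle closed sets via the upper envelope and then use a monotone class argument, invoking regularity to pin down the measure.

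Uniqueness of $G$ then follows from the uniqueness of each $G_{y^*}$, proved above.
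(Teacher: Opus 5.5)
Your architecture matches the paper's. Sending $l\in A(K,E)^*$ to its unique norm-preserving boundary representative is exactly the lifting $S$ the paper takes from \cite[Theorem 3.4]{saab-aeq}, and $(T^{**}S^*\varphi_{B,x})(y^*)=S(T^*y^*)(B)x$ is your $\mu_{y^*}(B)x$. Your uniqueness argument and your verification of the properties of $G$ are fine. The gap is in constructing $S$ yourself, at the step you flagged.

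The kernel formula $D\nu(B)=\int_K\delta_t(B)\di\nu(t)$ is not just hard to justify on non-metrizable simplices; it is false for closed sets, already for $E=\er$.
\begin{itemize}
\item In the porcupine simplex of Example~\ref{ex:dikobraz}, take $L=[0,1]$, $\lambda$ Lebesgue measure, and $\nu$ the image of $\lambda$ under $t\mapsto\phi(t,0)$.
\item Since $\delta_{\phi(t,0)}=\frac12(\ep_{\phi(t,-1)}+\ep_{\phi(t,1)})$, the closed set $B=\phi(L\times\{0\})$ gets $\int_K\delta_s(B)\di\nu(s)=0$.
\item But $\nu$ is already maximal, as noted in that example. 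So the regular measure $D\nu$ given by $D\nu(f)=\nu(Df)$ equals $\nu$, and $D\nu(B)=1$.
\end{itemize}
Your ``closed sets via the upper envelope'' step needs the interchange $\inf_{g\ge\chi_B}\int Dg\di\nu=\int\inf_{g\ge\chi_B}Dg\di\nu$ over the uncountable family of continuous $g\ge\chi_B$, and this is where it breaks. With the closed set $\phi(M)\setminus\phi(A\times\{1\})$ and $A\subset L$ non-measurable, even $\nu$-measurability of $t\mapsto\delta_t(B)$ is lost. The kernel formula is guaranteed only on Baire sets. So the monotone class plan and the estimate $\abs{D\nu}\le D\abs{\nu}$ over Borel partitions are unfounded as stated.

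The fix is to work with functionals rather than sets.
\begin{itemize}
\item For $f\in C(K,E)$ put $\wt{D}f(t)=\int_K f\di\delta_t$. On $C(K)\otimes E$ this equals $\sum_i(Df_i)x_i\in B(K,E)$, since each $Df_i$ is bounded Borel. As $f\mapsto\wt{D}f$ is a sup-norm contraction and $C(K)\otimes E$ is dense in $C(K,E)$, $\wt{D}f\in B(K,E)$ for all $f$.
\item Then $f\mapsto\int_K\wt{D}f\di\nu$ is a functional on $C(K,E)$ of norm at most $\abs{\nu}(K)$. Let $D\nu\in M(K,E^*)$ be its Singer representative.
\item Testing on $g\otimes x$ gives $(D\nu)_x=D(\nu_x)$, so $D\nu$ is boundary by Lemma~\ref{l:prenos}(a).
\item Applying $x^*\in E^*$ shows $\wt{D}a=a$ for $a\in A(K,E)$, so $D\nu=\nu$ on $A(K,E)$.
\item The bound $\abs{D\nu}(K)\le\abs{\nu}(K)$ comes for free, with no partition estimate.
\end{itemize}
With this replacement, or by simply citing \cite[Theorem 3.4]{saab-aeq} as the paper does, the rest of your proof goes through. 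Also drop your opening sentence about extending $T$ to $C(K,E)$ and invoking Theorem~\ref{t:riesz-singer}; your argument never does that.
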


\begin{proof}
We recall the construction of $G$ from \cite[Theorem 2]{saab-saab}. 

Let $B\in \Borel(K)$ and $x\in E$ be given, then $\varphi_{B,x}\in M(K,E^*)^*$ is defined as $\varphi_{B,x}(\mu)=\mu(B)x$, $\mu\in M(K,E^*)$. By \cite[Theorem 3.4]{saab-aeq} there exists a linear mapping $S\colon (A(K,E))^*\to M_{\bnd}(K,E^*)$ such that $\norm{S(l)}=\norm{l}$ for every $l\in (A(K,E))^*$ and $S(l)=l$ on $A(K,E)$.
The required representing measure $G$ is then defined as
\[
G(B)x=T^{**}\circ S^*(\varphi_{B,x}),\quad B\in\Borel(K), x\in E.
\]
Then \cite[Theorem 2]{saab-saab} proves that this formula works and that $G$ is uniquely determined by the properties in Definition~\ref{d:repremeasure}.
\end{proof}

\begin{remark}
It is remarked in \cite[Remark, p.\,395]{saab-saab} that if $T\colon A(K,E)\to F$ has a representing measure $G$, it admits an extension $S\colon C(K,E)\to F^{**}$ preserving the norm.
    It is proved in \cite[Theorem 3]{saab-saab} that, if every operator on $A(K,E)$ for a compact convex set $K$ admits an extension preserving the norm, then $K$ is a simplex. From thence it follows the restriction of our study to simplices.

    Similarly, \cite[Proposition 2]{saabs-rocky-crucial} explains why the possibility to extend any operator $T\colon \inxe\to F$ to an operator $\wh{T}\colon C(B_{X^*},E)\to F^{**}$ with preservation of norm gives that $X$ is an $L_1$-predual.
\end{remark}

A similar representation is available for $L_1$-preduals.

\begin{definition}\label{d:repremeasure-predual}
Let $X$ be an $L_1$-predual and $E,F$ be Banach spaces. If $T\colon \inxe\to F$ is an operator, its \emph{representing measure} is a finitely additive mapping $G$ defined on $\Borel(B_{X^*})$ with values in $L(E,F^{**})$ such that
\begin{itemize}
\item for each $y^*\in F^*$, the measure $G_{y^*}\colon \Borel(B_{X^*})\to E^*$ 
is a regular Borel countably additive boundary antihomogeneous $E^*$-valued vector measure on $B_{X^*}$, i.e., $G_{y^*}\in M_{\bnd,\ahom}(B_{X^*},E^*)$;
\item the semivariation of $G$ 
is finite, i.e., $\norm{G}(B_{X^*})<\infty$;
\item the mapping $y^*\to G_{y^*}|_{\inxe}$ is weak$^*$-to-weak$^*$ continuous from $F^*$ to $(X\wh{\otimes}_\ep E)^*$;
\item $T^*y^*=G_{y^*}$ on $X\wh{\otimes}_\ep E$ for all $y^*\in F^*$; and
\item $\norm{T}=\norm{G}(B_{X^*}).$
\end{itemize}
\end{definition}

The following result is from \cite{saabs-rocky-crucial}.

\begin{thm}\label{t:saab-saab-predual}
Let $X$ be an $L_1$-predual and $E,F$ be Banach spaces.
Let $T\colon \inxe\to F$ be an operator, then $T$ has a unique representing measure $G$.
\end{thm}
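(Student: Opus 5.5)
Existence will follow the construction in the proof of Theorem~\ref{t:saab-saab}, with $M_{\bnd}(K,E^*)$ replaced by $M_{\bnd,\ahom}(B_{X^*},E^*)$. By \cite[Theorem 2]{saabs-illinois} there is an isometric identification $S\colon (\inxe)^*\to M_{\bnd,\ahom}(B_{X^*},E^*)$ with $S(l)(u)=l(u)$ for $u\in\inxe$, where $u$ is viewed as an element of $A_{\hom}(B_{X^*},E)$ via Fact~\ref{f:predual-injective}(b). For $B\in\Borel(B_{X^*})$ and $x\in E$, let $\varphi_{B,x}\in M(B_{X^*},E^*)^*$ be the functional $\mu\mapsto\mu(B)x$, restricted to $M_{\bnd,\ahom}(B_{X^*},E^*)$. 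Its norm is at most $\norm{x}$. Define
\[
G(B)x=T^{**}\bigl(S^*\varphi_{B,x}\bigr)\in F^{**}.
\]
Linearity in $x$ and finite additivity in $B$ are immediate, so $G\colon\Borel(B_{X^*})\to L(E,F^{**})$.

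Next I verify the listed properties. For $y^*\in F^*$,
\[
G_{y^*}(B)x=(S^*\varphi_{B,x})(T^*y^*)=\varphi_{B,x}(S(T^*y^*))=S(T^*y^*)(B)x,
\]
so $G_{y^*}=S(T^*y^*)\in M_{\bnd,\ahom}(B_{X^*},E^*)$. This measure agrees with $T^*y^*$ on $\inxe$. Since $y^*\mapsto T^*y^*$ is weak$^*$-to-weak$^*$ continuous, the third bullet holds. For the semivariation I use formula~\eqref{eq:semivar}:
\[
\norm{G}(B_{X^*})=\sup_{y^*\in B_{F^*}}\norm{S(T^*y^*)}=\sup_{y^*\in B_{F^*}}\norm{T^*y^*}=\norm{T},
\]
because $S$ is an isometry.

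For uniqueness, suppose $G'$ is another representing measure. For each $y^*$, both $G_{y^*}$ and $G'_{y^*}$ lie in $M_{\bnd,\ahom}(B_{X^*},E^*)$ and agree with $T^*y^*$ on $\inxe$. By the uniqueness part of \cite[Theorem 2]{saabs-illinois}, which is the vector version of Lemma~\ref{l:decko-andproper}(e), they are equal. Hence $(G(B)x)(y^*)=(G'(B)x)(y^*)$ for every $y^*\in F^*$. Elements of $F^{**}$ are determined by their action on $F^*$, so $G=G'$.

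The main obstacle is justifying that $S$ is indeed an isometric bijection onto the boundary antihomogeneous measures, including uniqueness of the representing measure. I rely on \cite[Theorem 2]{saabs-illinois} for this, as the paper already does. If that theorem were unavailable, I would instead build $S$ from a Hahn–Banach extension to $C(B_{X^*},E)^*=M(B_{X^*},E^*)$. I would then apply $\hom$ (Fact~\ref{f:homog}) and a vector version of the projection $D$ from Lemma~\ref{l:decko-andproper}(f). Checking that this vector version does not increase the norm is the delicate step.
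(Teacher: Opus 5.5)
Your proposal is correct and follows the paper's proof. It uses the same construction $G(B)x=T^{**}S^*(\varphi_{B,x})$ with the norm-preserving lifting $S$ from \cite[Theorem 2]{saabs-illinois}, and the same uniqueness argument via $M_{\bnd,\ahom}(B_{X^*},E^*)\cap(\inxe)^\perp=\{0\}$; your explicit checks that $G_{y^*}=S(T^*y^*)$ and $\norm{G}(B_{X^*})=\norm{T}$ (via \eqref{eq:semivar}) fill in what the paper delegates to \cite[Remark 1]{saabs-rocky-crucial}.
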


\begin{proof}
We recall the construction of $G$ from \cite[Remark 1]{saabs-rocky-crucial}. 

Let $B\in \Borel(B_{X^*})$ and $x\in E$ be given, then $\varphi_{B,x}\in M(B_{X^*},E^*)^*$ is defined as $\varphi_{B,x}(\mu)=\mu(B)x$, $\mu\in M(B_{X^*},E^*)$. By \cite[Theorem 2]{saabs-illinois} there exists a linear mapping $S\colon \inxe^*\to M_{\bnd,\ahom}(B_{X^*},E^*)$ such that $\norm{S(l)}=\norm{l}$ for every $l\in \inxe^*$ and $S(l)=l$ on $\inxe$.
The required representing measure $G$ is then defined as
\[
G(B)x=T^{**}\circ S^*(\varphi_{B,x}),\quad B\in\Borel(B_{X^*}), x\in E.
\]
Then \cite[Remark 1]{saabs-rocky-crucial} shows that the formula works. 

Concerning the uniqueness, let $G'\colon \Borel(B_{X^*})\to L(E,F^{**})$ be another representing measure with the properties described in Definition~\ref{d:repremeasure-predual}.
Then for each $y^*\in F^*$, $z\in X$ and $x\in E$, we have
\[
\int_{B_{X^*}}z\otimes x\di G_{y^*}=T^*y^*(z\otimes x)=\int_{B_{X^*}}z\otimes x\di (G')_{y^*}.
\]
Hence 
\[
G_{y^*}-(G')_{y^*}\in M_{\bnd,\ahom}(B_{X^*},E^*)\cap (\inxe)^\perp.
\]
Since the latter set is equal to zero by \cite[Theorem 2]{saabs-illinois}, $G_{y^*}=(G')_{y^*}$. Hence $G=G'$.
\end{proof}

\begin{prop}
    \label{p:vztaha-tensor}
    Let $K$ be a simplex and $X=A(K)$. Let $E,F$ be Banach spaces and $I\colon A(K)\wh{\otimes}_\ep E\to A(K,E)$ be the identification from Fact~\ref{f:predual-injective}. Let $T\colon A(K,E)\to F$ be an operator with the representing measure $G\colon\Borel(K)\to L(E,F^{**})$ and let the operator $T'\colon A(K)\wh{\otimes}_\ep E\to F$ satisfying $T'=T\circ I$ have the representing measure $G'\colon \Borel(B_{X^*})\to L(E,F^{**})$. 
    
    Then, for each $y^*\in F^*$,
    \[
    (G')_{y^*}=\vhom \phi(G_{y^*})\quad \text{and}\quad  G_{y^*}=H(\theta^{-1}(G')_{y^*})
    \]
    (here $H$ is the mapping from Lemma~\ref{l:prenos}).
\end{prop}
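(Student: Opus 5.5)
The plan is to derive both identities from the uniqueness statements for boundary (antihomogeneous) measures that annihilate or represent the same functionals. Fix $y^*\in F^*$.

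\emph{First identity.} I would show that $\nu:=\vhom\phi(G_{y^*})$ is a legitimate candidate for $(G')_{y^*}$.

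\begin{enumerate}[(1)]
\item By Lemma~\ref{l:prenos}(b), $\nu$ is boundary. By Fact~\ref{f:homog}, $\nu$ is antihomogeneous, so $\nu\in M_{\bnd,\ahom}(B_{X^*},E^*)$.
\item Test $\nu$ on elementary tensors $z\otimes x$, where $z\in X=A(K)$ and $x\in E$. Under Fact~\ref{f:predual-injective}(b), $z\otimes x$ corresponds to the homogeneous function $s\mapsto s(z)x$ on $B_{X^*}$. This function is already homogeneous, so $\vhom$ fixes it. Composing with $\phi$ gives the function $t\mapsto z(t)x$ on $K$, which is exactly $I(z\otimes x)$.
\item Hence
\[
\int z\otimes x\di\nu=\int_K z(t)x\di G_{y^*}=T^*y^*(I(z\otimes x))=(T')^*y^*(z\otimes x)=\int z\otimes x\di (G')_{y^*}.
\]
\item By linearity and density this equality extends to all of $\inxe$. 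So $\nu-(G')_{y^*}$ lies in $M_{\bnd,\ahom}(B_{X^*},E^*)\cap(\inxe)^\perp$.
\item That intersection is $\{0\}$ by \cite[Theorem 2]{saabs-illinois}, exactly as in the uniqueness part of Theorem~\ref{t:saab-saab-predual}. This gives the first identity.
\end{enumerate}

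\emph{Second identity.} The strategy is symmetric.

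\begin{enumerate}[(1)]
\item Since $(G')_{y^*}$ is boundary, it is carried by $\theta(\TT\times K)$, so $\theta^{-1}((G')_{y^*})$ makes sense as a measure on $\TT\times K$.
\item By Lemma~\ref{l:prenos}(c), $\mu:=H(\theta^{-1}(G')_{y^*})$ belongs to $M_{\bnd}(K,E^*)$.
\item For $a\in A(K)$ and $x\in E$, compute
\[
\mu(a\otimes x)=\int_{\TT\times K}\alpha a(t)x\di\theta^{-1}(G')_{y^*}(\alpha,t).
\]
Here $\alpha a(t)=(\alpha\phi(t))(a)=\theta(\alpha,t)(a)$. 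So the integral equals $\int_{B_{X^*}} a\otimes x\di (G')_{y^*}=T^*y^*(I(a\otimes x))$, which is also $G_{y^*}(a\otimes x)$.
\item By Fact~\ref{f:predual-injective}(a), $A(K)\otimes E$ is dense in $A(K,E)$. Hence $\mu-G_{y^*}$ is a boundary measure in $M(K,E^*)$ annihilating $A(K,E)$.
\item To conclude it vanishes, I would use the uniqueness of the boundary representing measure on a simplex: the isometric section $S$ from \cite[Theorem 3.4]{saab-aeq} used in Theorem~\ref{t:saab-saab}, equivalently the uniqueness part of \cite[Theorem 2]{saab-saab}. If only the scalar version is available, reduce to scalars via Lemma~\ref{l:prenos}(a). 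For each $x\in E$, $(\mu-G_{y^*})_x$ is a boundary scalar measure annihilating $A(K)$, hence zero because $K$ is a simplex. Then $\mu=G_{y^*}$.
\end{enumerate}

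\emph{Expected obstacle.} The main difficulty is the bookkeeping in the change-of-variables steps. One must check that integrating vector-valued functions against pushforward measures ($\phi$, $\theta^{-1}$, $\vhom$) behaves as claimed. Concretely, one needs $\int f\di\phi(\mu)=\int f\circ\phi\di\mu$ for $f\in C(B_{X^*},E)$, which holds by definition. For $\theta^{-1}$ one needs $\theta$ to be a homeomorphism onto a compact carrier of the measure.

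One further subtlety concerns the pullback to $\TT\times K$ when $0\in\phi(K)$ might cause non-injectivity. This does not arise here, since $\phi(K)$ lies in the hyperplane where $1$ evaluates to $1$, so $\theta$ is injective.

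The scalar uniqueness fact (a boundary measure on a simplex that annihilates $A(K)$ is zero) is the only external ingredient. I would cite it via the Saab--Saab representation already invoked.
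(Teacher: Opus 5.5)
Your proposal is correct and follows the paper's proof essentially step for step. Both identities are obtained by testing on elementary tensors, using that $\vhom$ fixes $s\mapsto s(z)x$ and that $\theta(\alpha,t)(a)=\alpha a(t)$, with the boundary property supplied by Lemma~\ref{l:prenos}; the conclusions then come from $M_{\bnd,\ahom}(B_{X^*},E^*)\cap(\inxe)^\perp=\{0\}$ \cite[Theorem 2]{saabs-illinois} and $M_{\bnd}(K,E^*)\cap A(K,E)^\perp=\{0\}$ \cite[Theorem 3.4]{saab-aeq}, and your optional scalar reduction via Lemma~\ref{l:prenos}(a) in the last step is also valid.
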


\begin{proof}
Given $y^*\in F^*$, let $f\in A(K)$ and $x\in E$ be given. Then
\[
\begin{aligned}
\int_{B_{X^*}}&(f\otimes x)\di (G')_{y^*}=((T')^*y^*)(f\otimes x)=y^*(T'(f\otimes x))=y^*(T(I(f\otimes x)))\\
&=(T^*y^*)(I(f\otimes x))=\int_K f(t)x\di G_{y^*}(t)=\int_{B_{X^*}} (f\otimes x)\di \phi(G_{y^*})\\
&=\int_{B_{X^*}} (f\otimes x) \di \hom \phi(G_{y^*}).
\end{aligned}
\]
Hence $\vhom\phi(G_{y^*})-(G')_{y^*}$ is a boundary antihomogeneous measure on $B_{X^*}$ (see Lemma~\ref{l:prenos}) that annihilates $A_{\hom}(B_{X^*},E)=\inxe$. Thus, it is equal to zero by \cite[Theorem 2]{saabs-illinois}.
In other words, $(G')_{y^*}=\vhom\phi(G_{y^*})$.

Concerning the other equality, we observe that for $f\in A(K)$ and $x\in E$ we have
\[
\begin{aligned}
\int_{K}&(f\otimes x)\di H(\theta^{-1}(G')_{y^*})=\int_{(\alpha,t)\in \TT\times K}\alpha(f\otimes x)(t)\di \theta^{-1}((G')_{y^*})\\
&=\int_{B_{X^*}}s(f)x\di (G')_{y^*}(s)
=\int_{B_{X^*}} (f\otimes x)\di (G')_{y^*}=((T')^*y^*)(f\otimes x)\\
&=y^*(T'(f\otimes x))=y^*(T(I(f\otimes x)))=(T^*y^*)(I(f\otimes x))=\int_K f(t)x\di G_{y^*}(t).    
\end{aligned}
\]
Hence $G_{y^*}-H(\theta^{-1}(G')_{y^*})\in M_{\bnd}(K,E^*)\cap A(K,E)^\perp$ (see Lemma~\ref{l:prenos}(c)), and thus it is equal to zero by \cite[Theorem 3.4]{saab-aeq}. Thus $G_{y^*}=H(\theta^{-1}(G')_{y^*})$.
\end{proof}

\section{Unconditionally convergent operators on $\inxe$}
\label{s:uncoopare}

Here we recall \cite[Proposition 1]{saabs-rocky-crucial}, which shows that any unconditionally converging operator on $\inxe$ (where $X$ is an $L_1$-predual) is strongly bounded.

\begin{thm}
\label{t:uncond-sbdd}
Let $X$ be an $L_1$-predual and $E,F$ be Banach spaces. Then any unconditionally converging operator $T\colon \inxe \to F$ is strongly bounded.
\end{thm}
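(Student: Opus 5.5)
We argue by contradiction and use criterion (iii) of Theorem~\ref{t:sbmeasures}. Let $G$ be the representing measure of $T$ from Theorem~\ref{t:saab-saab-predual}. Then $G_{y^*}=T^*y^*$ as elements of $M_{\bnd,\ahom}(B_{X^*},E^*)$ (via the norm-preserving identification $S$). Theorem~\ref{t:sbmeasures} applies, so it suffices to show that $\{\abs{G_{y^*}}\setsep y^*\in B_{F^*}\}$ is uniformly countably additive.

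Suppose it is not. Then, by the standard Dieudonné--Grothendieck type argument (the theorem of \cite[IV.9]{dunford-schwartz}), there are $\ep>0$, functionals $y_n^*\in B_{F^*}$ and pairwise disjoint Borel sets $B_n\subset B_{X^*}$ with $\abs{G_{y_n^*}}(B_n)>\ep$. By regularity and the definition of variation, we may shrink each $B_n$ to a compact set $C_n$. We may also pick a finite Borel partition of $C_n$ and vectors in $B_E$, which approximately yields a simple function $h_n$ with $\norm{h_n}\le1$, supported in $C_n$, and $\abs{\int h_n\di G_{y_n^*}}>\ep$. Since $G_{y^*}$ is antihomogeneous, we may replace $h_n$ by its homogenization and $C_n$ by $\TT C_n$. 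Passing to a subsequence should keep the sets $\TT C_n$ disjoint, or at least keep the overlaps small in $\abs{G_{y_n^*}}$; arranging this is where some care is needed.

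The key step, and the main obstacle, is to convert the $h_n$ into elements $u_n\in\inxe=A_{\hom}(B_{X^*},E)$ such that two things hold:
\begin{enumerate}[(1)]
\item $\sum_n u_n$ is weakly unconditionally Cauchy; it suffices that $\norm{\sum_{n\in F}\alpha_n u_n}\le C$ for all finite $F$ and $\abs{\alpha_n}\le1$;
\item $\abs{y_n^*(Tu_n)}=\abs{\int u_n\di G_{y_n^*}}>\ep/2$.
\end{enumerate}
Condition (2) contradicts unconditional convergence of $\sum Tu_n$, since that would force $\norm{Tu_n}\to0$.

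To get such $u_n$, I would use the $L_1$-predual structure through the operator $D$ of Lemma~\ref{l:decko-andproper}. The function $D h_n$, computed coordinatewise, is homogeneous, strongly affine and bounded by $1$. Moreover $\int Dh_n\di G_{y^*}=\int h_n\di G_{y^*}$ because $G_{y^*}$ is boundary and antihomogeneous (Lemma~\ref{l:decko-andproper}(e),(f)). Then Lemma~\ref{l:approx-teleman}, combined with the selection results behind Fact~\ref{f:predual-injective}(b) (Lazar--Lindenstrauss and Olsen), should give elements of $A_{\hom}(B_{X^*},E)$ approximating $Dh_n$ almost everywhere with respect to a fixed control measure $\lambda=\sum 2^{-n}\abs{G_{y_n^*}}$. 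The delicate point is the uniform bound in (1). Here I would use that disjointly supported functions of sup-norm at most $1$ have $\norm{\sum \alpha_n h_n}_\infty\le1$. The hope is that $D$, being a norm-one operator into bounded Borel functions, preserves this bound for sums, giving $\norm{\sum\alpha_n Dh_n}\le1$. The approximation must then be done for all finite sums simultaneously, for instance by building the $u_n$ inductively with errors $2^{-n}$, or by using a selection theorem applied to $\Phi(t)=\sum_{n\in F}\alpha_n Dh_n(t)+\delta B_E$ restricted to a suitable face.

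If this direct route fails, the fallback is to go through Lusky's theorem. A real $L_1$-predual is $1$-complemented in some $A(K)$ with $K$ a simplex, and from there one can pass to $C(\ext K)$-like settings where Dobrakov's theorem already gives strong boundedness. Proposition~\ref{p:vztaha-tensor} would then transfer the representing measures back to $B_{X^*}$.
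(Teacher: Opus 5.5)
\textbf{Gap in step (1).} Your contradiction frame is fine: build a weakly unconditionally Cauchy series $\sum u_n$ in $A_{\hom}(B_{X^*},E)$ with $\abs{\int u_n\di G_{y_n^*}}>\ep/2$. But step (1) is left open, and it carries all of the difficulty. For $E=\ef$ the theorem is equivalent to $X$ having Pelczy\'nski's property (V). Indeed, $G_{y^*}$ is the image of $T^*y^*$ under the linear isometry $X^*\to M_{\bnd,\ahom}(B_{X^*})$ of \cite[Theorem 2]{saabs-illinois}. So by Theorem~\ref{t:sbmeasures}, strong boundedness of $G$ means that $T^*B_{F^*}$ is relatively weakly compact, i.e.\ that $T$ is weakly compact. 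Any proof must therefore contain or import property (V) of $L_1$-preduals, which is deep. The paper gets it from Johnson--Zippin \cite{johnson-zippin} in the real case. In the complex case it places a given sequence in a separable $L_1$-predual $Y$, uses that $Y$ is $1$-complemented in a separable $C^*$-algebra \cite{lusky-compl}, and that $C^*$-algebras have (V) \cite{pfitzner-mathann}. This input is then fed into the argument of \cite[Proposition 1]{saabs-rocky-crucial}, \cite[Theorem 3]{saabs-illinois}. Your proposal never uses (V) of $X$, so step (1) would have to reprove it, and nothing you sketch does so.

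\textbf{Why the suggested tools do not work.} The estimate $\norm{\sum_{n\in N}\alpha_n Dh_n}_\infty\le 1$ is true once the supports are disjoint. However, the $Dh_n$ are merely Borel, so they cannot be uniformly approximated from $A_{\hom}(B_{X^*},E)$, and the estimate does not pass to any $u_n$. Lemma~\ref{l:approx-teleman} only gives $\lambda$-a.e.\ convergence with a norm bound on each approximant separately. It gives no control of $\sup_t\norm{\sum_{n\in N}\alpha_n u_n(t)}$ at points where the $u_n$ are not close to $Dh_n$, including all of $\ext B_{X^*}$ outside the support of $\lambda$. There, $m$ functions of norm about $1$ may add up to norm about $m$. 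The lemma is also scalar: approximating the coefficients of $Dh_n=\sum_i D\chi_{E_{n,i}}\,x_{n,i}$ separately destroys even $\norm{u_n}\le 1$.

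A selection as in Lemma~\ref{l:extense} has the same defect: it prescribes $u_n$ on a compact homogeneous $L_n\subset\ext B_{X^*}$ and allows arbitrary values in $B_E$ elsewhere. The map $t\mapsto\sum_{n\in N}\alpha_nDh_n(t)+\delta B_E$ is not lower semicontinuous. Even if it were, it would give a different selection for each choice of $N$ and $(\alpha_n)$, not a single sequence.

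The fallback fails as well. For a non-Bauer simplex $K$, $\ext K$ is not closed and $A(K)$ is not a $C(L)$-space, so Dobrakov's theorem does not apply. A $1$-complementation of $X$ in $A(K)$ only moves the problem to $A(K,E)=A(K)\wh{\otimes}_\ep E$, which is an instance of the same theorem. Proposition~\ref{p:vztaha-tensor} compares measures on $K$ and on $B_{A(K)^*}$, not on $B_{X^*}$ and $K$.

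A minor point: passing to a subsequence does not make the sets $\TT C_n$ disjoint. Since all $\abs{G_{y^*}}$ are $\TT$-invariant, you should choose disjoint invariant sets from the start, e.g.\ via the quotient $B_{X^*}/\TT$.
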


\begin{proof}
The result is stated as \cite[Proposition 1]{saabs-rocky-crucial}, which was shown during the proof of \cite[Theorem 3]{saabs-illinois}. However, a key ingredient was the result in \cite{johnson-zippin} that an $L_1$-predual $X$ has  the property (V). Unfortunately, the result in \cite{johnson-zippin} is  formulated only for real Banach spaces. For complex spaces it is possible to repair the proof as follows: Let $T\colon X\to Y$ be an unconditionally converging operator on a complex $L_1$-predual. Let $(x_n)\subset B_X$ be a given sequence. Then there exists a separable $L_1$-predual $Y$ satisfying $(x_n)\subset Y\subset X$ (see \cite[Chapter 7, \S 23, Lemma 1]{lacey}). By \cite[Theorem]{lusky-compl}, $Y$ is $1$-complemented in  a separable $C^*$-algebra $A$ by a projection $P$. Since $A$ has  the property (V) (see  \cite[Corollary 6]{pfitzner-mathann}), the operator $T|_{Y}\circ P$ is weakly compact. Hence $T|_Y $ is weakly compact, which gives a subsequence $(x_{n_k})$ such that $T(x_{n_k})$ converges weakly. Hence $T$ is weakly compact and thus $X$ has the property (V).
    
\end{proof}
\section{Strongly bounded operators on $\inxe$}
\label{s:strongly}

Let now $T\colon \inxe\to F$ be a strongly bounded operator.
Now we can obtain an analog of Theorem~\ref{t:sbo}. Before embarking on its proof, we need a lemma.

\begin{lemma}
    \label{l:odhad}
    Let $G\colon \Borel(K)\to L(E,F^{**})$ be a bounded vector measure and $y^*\in F^*$, $x\in B_E$. Then
    \[
 \abs{G_{y^*,x}}\le\abs{G_{y^*}}.
    \]
\end{lemma}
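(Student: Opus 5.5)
The inequality $\abs{G_{y^*,x}}\le\abs{G_{y^*}}$ is an inequality between set functions on $\Borel(K)$. I will prove it directly from the definition of variation, evaluated on an arbitrary Borel set $B$. No countable additivity or regularity is needed: both sides are defined by the same supremum over finite Borel decompositions. The plan is to compare the two sums term by term for each fixed decomposition.

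Fix $B\in\Borel(K)$ and a finite Borel decomposition $\pi(B)=\{A_1,\dots,A_n\}$ of $B$. By definition, $G_{y^*,x}(A_i)=(G(A_i)x)(y^*)=G_{y^*}(A_i)x$, where $G_{y^*}(A_i)\in E^*$. Since $\norm{x}_E\le 1$, this gives
\[
\abs{G_{y^*,x}(A_i)}=\abs{G_{y^*}(A_i)x}\le \norm{G_{y^*}(A_i)}_{E^*}\norm{x}_E\le \norm{G_{y^*}(A_i)}_{E^*},\quad i=1,\dots,n.
\]
Summing over $i$ yields
\[
\sum_{i=1}^n\abs{G_{y^*,x}(A_i)}\le\sum_{i=1}^n\norm{G_{y^*}(A_i)}_{E^*}\le\abs{G_{y^*}}(B).
\]
Taking the supremum over all finite Borel decompositions of $B$ gives $\abs{G_{y^*,x}}(B)\le\abs{G_{y^*}}(B)$. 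Since $B$ was arbitrary, the inequality holds as set functions.

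I do not expect any real obstacle. The only point needing care is that both variations are the finitely additive variations defined in Section~\ref{ssc:singer}. If $\abs{G_{y^*}}(B)=\infty$ the inequality is trivial. The argument uses only the linearity of $G(A_i)$ and the duality estimate $\abs{e^*(x)}\le\norm{e^*}\norm{x}$.
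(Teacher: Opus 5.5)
Your proof is correct and is essentially the paper's argument. Both bound $\abs{G_{y^*,x}(A_i)}=\abs{G_{y^*}(A_i)x}\le\norm{G_{y^*}(A_i)}$ term by term over a finite Borel decomposition and pass to the supremum; the paper picks an $\ep$-almost-optimal decomposition for $\abs{G_{y^*,x}}(B)$, while you take the supremum directly, which avoids the $\ep$.
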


\begin{proof}
For each $E\in\Borel(K)$ we have
\[
\begin{aligned}
\abs{G_{y^*,x}}(E)&=\sup_{(E_j)\in \pi(E)}\sum \abs{G_{y^*,x}(E_j)}=\sup_{(E_j)\in \pi(E)}\sum \abs{(G(E_j)x)(y^*)}
\end{aligned}
\]
and
\[
\begin{aligned}
\abs{G_{y^*}}(E)=\sup_{(E_j)\in \pi(E)}\sum \norm{G_{y^*}(E_j)}.
\end{aligned}
\]
For $\ep>0$, let $(E_j)\in\pi(E)$ be such that
\[
\abs{G_{y^*,x}}(E)-\ep<\sum \abs{(G(E_j)x)(y^*)}.
\]
Since 
\[
\norm{G_{y^*}(E_j)}\ge \abs{(G_{y^*}(E_j))(x)}=\abs{(G(E_j)x)(y^*)},
\]
we get
\[
\abs{G_{y^*,x}}(E)-\ep<\sum \abs{(G(E_j)x)(y^*)}\le \sum \norm{G_{y^*}(E_j)}\le \abs{G_{y^*}}(E).
\]
This concludes the proof.
\end{proof}

\begin{thm}
\label{t:controlmeasure}
Let $X$ be an $L_1$-predual and $E,F$ be Banach spaces.
Let $T\colon \inxe\to F$ be a strongly bounded operator with the representing measure $G$. 
\begin{enumerate}[(a)]
\item Then $G$ has values in $L(E,F)$.
\item There exists a maximal $\TT$-invariant measure $\lambda\in M^+(B_{X^*})$ such that 
\[
\lim_{B\in \Borel(K),\lambda(B)\to 0}\norm{G}(B)=0
\]
(this measure is called a \emph{control measure of $G$.})
\end{enumerate}
\end{thm}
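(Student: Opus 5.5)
The plan is to run the proof of Theorem~\ref{t:sbo} for $C(K,E)$, using the representing measure of Theorem~\ref{t:saab-saab-predual} and the characterization in Theorem~\ref{t:sbmeasures}. For each $y^*\in F^*$ the measure $G_{y^*}$ lies in $M(B_{X^*},E^*)$, and $\norm{G}$ is finite. Strong boundedness of $G$ therefore gives, by Theorem~\ref{t:sbmeasures}, that the family $\{\abs{G_{y^*}}\setsep y^*\in B_{F^*}\}$ is relatively weakly compact and uniformly countably additive in $M(B_{X^*})$. Consequently it admits a control measure $\lambda_0\in M^+(B_{X^*})$ with $\norm{G}(B)\to 0$ as $\lambda_0(B)\to 0$. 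A concrete choice is $\lambda_0=\sum_n 2^{-n}\abs{G_{y_n^*}}$ for a suitable sequence $(y_n^*)\subset B_{F^*}$, as in \cite[Theorem IV.9.2]{dunford-schwartz} or \cite[Lemma 2]{dobrakov}.

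For (b) the task is to replace $\lambda_0$ by a maximal $\TT$-invariant measure. The invariance step is straightforward. Each $G_{y^*}$ is antihomogeneous, so $\abs{G_{y^*}}$ is $\TT$-invariant: for $\alpha\in\TT$ the map $B\mapsto \alpha B$ sends partitions to partitions and $\norm{G_{y^*}(\alpha A)}=\norm{G_{y^*}(A)}$. Hence $\norm{G}(\alpha B)=\norm{G}(B)$ by \eqref{eq:semivar}, and $\lambda_0=\sum 2^{-n}\abs{G_{y_n^*}}$ is already invariant. If a general $\lambda_0$ is used instead, one can average it, $\lambda(B)=\int_{\TT}\lambda_0(\alpha B)\di\alpha$. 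In that case $\lambda(B)$ small forces $\lambda_0(\alpha B)$ to be small for a set of $\alpha$ of large Haar measure, and invariance of $\norm{G}$ then transfers the control.

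Maximality needs more care, and I expect it to be the main obstacle. Each $G_{y^*}$ is boundary, meaning $\abs{G_{y^*}}$ is maximal, and maximal measures form a cone that is closed under countable norm-convergent sums. This holds because maximality is characterized by the Mokobodzki test $\mu(g^*)=\mu(g)$, which passes to norm-convergent series. So $\lambda=\sum 2^{-n}\abs{G_{y_n^*}}$ is maximal. The issue is that the control measure must genuinely be of this form. To ensure it, I will not invoke Theorem~\ref{t:sbmeasures}(iv) as a black box. Instead I will use the standard argument from relative weak compactness: by the Bartle–Dunford–Schwartz/Dobrakov construction, a relatively weakly compact set $\mathcal K\subset M^+(B_{X^*})$ admits a control measure that is a countable convex combination of elements of $\mathcal K$. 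Applied to $\mathcal K=\{\abs{G_{y^*}}\}$, this yields $\lambda$ maximal and invariant, and $\lambda\le \norm{G}$ by \eqref{eq:semivar}.

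For (a) I will follow \cite[Corollary 4.4.1]{lewis-brooks}. Fix $B$ and $x\in E$; the claim is that $G(B)x\in F$. By regularity of $\lambda$ and strong boundedness, choose a closed $C\subset B$ and an open $U\supset B$ with $\lambda(U\setminus C)$ small, so that $\norm{G}(U\setminus C)<\ep$. By Urysohn, take $h\in C(B_{X^*})$ with $\chi_C\le h\le\chi_U$, and apply Lemma~\ref{l:approx-teleman}-type approximation to the homogeneous strongly affine function $D(\hom h)$ to obtain elements of $X$. The obstacle here is that $h$ itself is not in $X$. I would use that $\int (z\otimes x)\di G=T(z\otimes x)\in F$ for $z\in X$, and approximate $\chi_B\otimes x$ in $L_1(\lambda)$ by homogeneous affine elements, then use Fact~\ref{f:odhad} together with the control $\lambda$ to conclude that $G(B)x$ is a norm limit of elements of $F$. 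Antihomogeneity allows reduction to sets of the form $\TT B$ after splitting, and boundarity justifies replacing $h$ by $Dh$ $\lambda$-a.e.
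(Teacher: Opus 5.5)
Your part (b) is the paper's argument. The Dunford--Schwartz control measure is a norm-convergent sum $\sum_n 2^{-n}\abs{G_{y_n^*}}$. You correctly justify that each summand is $\TT$-invariant (from antihomogeneity) and that maximality survives the sum (Mokobodzki test).

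For (a) you take a genuinely different route. The paper argues by contradiction and uses no control measure. If $G(B)x\notin F$, there is a weak$^*$ null net $(y_i^*)\subset B_{F^*}$ with $\abs{(G(B)x)(y_i^*)}>\eta$. Relative weak compactness of $\{\abs{G_{y^*}}\}$ gives a weak cluster point $\mu$ of $(G_{y_i^*,x})$. This $\mu$ is boundary and antihomogeneous by Fact~\ref{f:weakly}, and it annihilates $X$ by the weak$^*$ continuity of $y^*\mapsto G_{y^*}|_{\inxe}$. Hence $\mu=0$ by \cite[Theorem 2]{saabs-illinois}, contradicting $\abs{\mu(B)}\ge\eta$.

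Your approach approximates $G(B)x$ in the norm of $F^{**}$ by $T(z\otimes x)$, $z\in X$, uniformly over $y^*\in B_{F^*}$. This is precisely the argument of Lemma~\ref{l:approx}, which the paper proves after the theorem. All its estimates live in $F^{**}$, so it does not presuppose (a); proving (b) first, as you do, avoids circularity. It needs more machinery (the control measure, Lemma~\ref{l:decko-andproper}, Lemma~\ref{l:approx-teleman}), but it gives more. You get a quantitative approximation of the operator $G(B)$ by $e\mapsto T(z\otimes e)$ uniformly on $B_E$, which is what Corollary~\ref{c:operbecka} later needs. The paper's proof of (a) is purely qualitative.

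Two statements in your sketch are false as written and should be replaced.
\begin{itemize}
\item You cannot replace $h$ by $Dh$ $\lambda$-a.e. For instance $D1=0$, since each $D_t$ is antihomogeneous.
\item You cannot approximate $\chi_B$ in $L_1(\lambda)$ by homogeneous functions. For invariant $\lambda$ and a homogeneous set $B$, the $L_1(\lambda)$-distance from $\chi_B$ to any homogeneous function is at least $\lambda(B)$.
\end{itemize}
What holds, and suffices, is the identity
\[
\int_{B_{X^*}} h\di G_{y^*,x}=\int_{B_{X^*}} Dh\di G_{y^*,x},\qquad y^*\in F^*,\ x\in E.
\]
This is $D(G_{y^*,x})=G_{y^*,x}$ from Lemma~\ref{l:decko-andproper}(f), which applies because $G_{y^*,x}=(G_{y^*})_x$ is boundary and antihomogeneous. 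The only $L_1(\lambda)$-approximation is of $Dh$ by elements $z\in X$. The chain $\chi_B\to h\to Dh\to z$ is then estimated term by term against $G_{y^*,x}$:
\begin{itemize}
\item the first step uses $\norm{G}(U\setminus C)<\ep$;
\item the middle step is the identity above;
\item the last step uses a set of small $\lambda$-measure (Chebyshev or Egorov), where $\norm{G}$ is small.
\end{itemize}
The reduction to sets of the form $\TT B$ is not needed.
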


\begin{proof}
(a) Let $B\in\Borel(B_{X^*})$ be given. For $x\in B_E$ we want to prove that the element $y^{**}=G(B)x\in F^{**}$ is in fact in $F$.
To this end, it is enough to check that the restriction of $y^{**}$ to $B_{F^*}$ is weak$^*$ continuous. 

If we assume the contrary, we can find $\eta>0$ and a net $(y_i^*)_{i\in I}\subset B_{F^*}$ converging weak$^*$ to zero such that $\abs{y^{**}(y_i^*)}>\eta$, $i\in I$. Since $G$ is strongly bounded, the set
$\{\abs{G_{y_i^*}}\setsep i\in I\}$ is a bounded relatively weakly compact set in $M(B_{X^*})$. 
Since $\abs{G_{y_i^*,x}}\le \abs{G_{y_i^*}}$ (see Lemma~\ref{l:odhad}), the set
$\{\abs{G_{y_i^*,x}}\setsep i\in I\}$ is also relatively weakly compact.
Hence also the set $\{G_{y_i^*,x}\setsep i\in I\}$ is relatively weakly compact in $M(B_{X^*})$, see \cite[Chapter 1, Corollary 4]{diesteluhl} and \cite[Theorem IV.9.2, p.\,306]{dunford-schwartz}.
Thus, we may assume that $(G_{y_i^*,x})_{i\in I}$ converges weakly to $\mu\in M(B_{X^*})$. Since the measures $G_{y_i^*,x}$ are boundary and homogeneous, and $M_{\bnd,\ahom}(B_{X^*})$ is weakly closed (see Fact~\ref{f:weakly}), the measure $\mu$ is also in $M_{\bnd,\ahom}(B_{X^*})$.
Since the mapping $y^*\mapsto G_{y^*}|_{\inxe}$ is weak$^*$ continuous, for each $z\in X$
we know that the mapping $y^*\mapsto G_{y^*}(z\otimes x)$ is continuous on $B_{F^*}$. Thus, we obtain
\[
0=\lim_{i\in I} \int_{B_{X^*}} (z\otimes x)\di G_{y_i^*}=\lim_{i\in I}\int_{B_{X^*}} z\di G_{y_i^*,x}=\int_{B_{X^*}} z\di \mu,\quad z\in X.
\]
Hence $\mu\in M_{\bnd,\ahom}(B_{X^*})\cap X^\perp$, which gives $\mu=0$ by \cite[Theorem 2]{saabs-illinois}.

On the other hand,
\[
\eta\le \lim_{i\in I}\abs{y^{**}(y_i^*)}=\lim_{i\in I}\abs{(G(B)x)(y_i^*)}=\lim_{i\in I}\abs{G_{y_i^*,x}(B)}=\abs{\mu(B)}=0.
\]
This contradiction proves that $G(B)x\in F$.

(b) Since $G$ is strongly bounded, the set $H=\{\abs{G_{y^*}}\setsep y^*\in B_{F^*}\}$ is relatively weakly compact. By \cite[Theorem IV.9.2, p.\,306]{dunford-schwartz}, there exists a positive measure $\lambda$ on the $\sigma$-algebra $\Borel(B_{X^*})$ such that  
\begin{itemize}
\item $\lim_{B\in\Borel(B_{X^*}),\lambda(B)\to 0}\norm{G}(B)=0$;
\item $\lambda\in M^+(B_{X^*})$;
\item $\lambda$ is maximal and $\TT$-invariant.    
\end{itemize}
Indeed, the first property is stated in the cited theorem, the second and the third property follows from the proof of the theorem, since $\lambda$ is defined as an absolutely convergent sum of measures from $H$ which are maximal and $\TT$-invariant (see \cite[p.\,307]{dunford-schwartz}).
Hence $\lambda$ is the desired measure.
\end{proof}

\begin{lemma}
    \label{l:approx}
 Let $G$ be a representing measure for $T\colon\inxe\to F$ that is strongly bounded. Then for each $B\in\Borel(B_{X^*})$ and $\ep>0$ there exists $z\in  X$ such that the operator $S\in L(E,F)$ defined as $Se=T(z\otimes e)$, $e\in E$, satisfies $\norm{G(B)-S}<\ep$.
\end{lemma}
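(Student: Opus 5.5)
The plan is to approximate the Borel set $B$, or rather the homogeneous function generated by it, by an element $z\in X$ in $L_1$ of the control measure, and then use the control measure to pass from this approximation to an estimate in operator norm.

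\textbf{Step 1: a control measure.} Take a maximal $\TT$-invariant control measure $\lambda\in M^+(B_{X^*})$ for $G$ from Theorem~\ref{t:controlmeasure}(b), and normalise it to a probability measure (if $\lambda=0$, then $G=0$ and $z=0$ works). Since $\lambda$ is a control measure, we can fix $\delta>0$ such that $\norm{G}(A)<\ep/4$ whenever $\lambda(A)<\delta$.

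\textbf{Step 2: a homogeneous strongly affine function representing $G(B)$.} By Lemma~\ref{l:decko-andproper}, the natural candidate is $f=D\chi_B$, i.e.\ $f(t)=D_t(B)$. We would extend $D$ to bounded Borel functions through the measures $D_t$, and we need $f$ to be Borel, homogeneous and strongly affine; here we hope Lemma~\ref{l:decko-andproper}(a) extends by a monotone class argument. The point of this choice is that, for every $y^*$ and $e$, the measure $G_{y^*,e}$ lies in $M_{\bnd,\ahom}(B_{X^*})$. Hence Lemma~\ref{l:decko-andproper}(e),(f) give $DG_{y^*,e}=G_{y^*,e}$, which yields
\[
G_{y^*,e}(B)=\int \chi_B\di DG_{y^*,e}=\int f\di G_{y^*,e}.
\]
So $G(B)e=\int f\otimes e\di G$, at least weak$^*$. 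We also have $\norm{f}\le 1$.

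\textbf{Step 3: approximation in measure.} Apply Lemma~\ref{l:approx-teleman} to $f$ and $\lambda$ to get $(x_n)\subset X$ with $\norm{x_n}\le 1$ and $x_n\to f$ $\lambda$-almost everywhere. By Egorov's theorem there are $n$ and a Borel set $A$ with $\lambda(A)<\delta$ such that $\abs{x_n-f}<\ep/4$ on $B_{X^*}\setminus A$. Put $z=x_n$ and $Se=T(z\otimes e)=\int z\otimes e\di G$. For $e\in B_E$, Fact~\ref{f:odhad} applied to $(z-f)\otimes e$ gives
\[
\norm{G(B)e-Se}\le \tfrac{\ep}{4}\norm{G}(B_{X^*}\setminus A)+2\norm{G}(A),
\]
which is small. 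To get the full bound $\ep$ we adjust the constants so that $\tfrac{\ep}{4}\norm{G}(B_{X^*})$ is also small, i.e.\ replace $\ep/4$ by $\ep/(4\norm{G}(B_{X^*})+4)$.

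\textbf{Main obstacle.} The hard part is Step 2. We must justify that $D\chi_B$ is a homogeneous strongly affine Borel function and that $\int\chi_B\di\mu=\int D\chi_B\di\mu$ for boundary antihomogeneous $\mu$; Lemma~\ref{l:decko-andproper} gives this only for continuous $f$. My first attempt would be a monotone class argument: the class of bounded Borel $g$ for which $Dg$ is Borel, homogeneous and strongly affine, and $\mu(Dg)=\mu(g)$ for all $\mu\in M_{\bnd,\ahom}$, contains $C(B_{X^*})$ and is closed under bounded pointwise limits by dominated convergence. Strong affinity of the limit should pass through dominated convergence in $\int g\di\nu=g(r(\nu))$. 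If this fails, the alternative is to approximate $\chi_B$ in $L_1(\lambda)$ by continuous functions $g$ (by regularity) and use $Dg$ instead. This requires controlling $\abs{G_{y^*}}$ through $D$, using that $\lambda$ dominates all the $\abs{G_{y^*}}$ uniformly.
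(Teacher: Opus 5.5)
Your Steps~1 and~3 are fine, and the fallback you sketch at the end is in fact the paper's proof. The primary route of Step~2, however, has a concrete gap. Your class (bounded Borel $g$ with $Dg$ Borel, homogeneous and strongly affine, and $\mu(Dg)=\mu(g)$ for all $\mu\in M_{\bnd,\ahom}(B_{X^*})$) is a vector space containing $C(B_{X^*})$. It is also closed under bounded pointwise \emph{sequential} limits. But the monotone class theorem then only yields the bounded $\sigma(C(B_{X^*}))$-measurable functions, i.e.\ the \emph{Baire} functions.

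As soon as $X$ is nonseparable, $B_{X^*}$ is not metrizable and the Baire $\sigma$-algebra is strictly smaller than $\Borel(B_{X^*})$. Indeed, a compact Baire set must be a $G_\delta$, and then $\{0\}$ is not a $G_\delta$ in $B_{X^*}$. A closed non-$G_\delta$ set $B$ is only the limit of a decreasing \emph{net} of continuous functions, and since the $D_t$ are signed measures, nothing makes $Dg$ stay Borel along such nets. So for a Borel, non-Baire $B$ you have no argument that $t\mapsto D_t(B)$ is Borel or strongly affine. Consequently neither Lemma~\ref{l:approx-teleman} nor the identity $\mu(\chi_B)=\mu(D\chi_B)$ is available.

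The gap is easy to close using $\lambda$. Since $\lambda$ is Radon, every $B\in\Borel(B_{X^*})$ admits a Baire set $B'$ with $\lambda(B\triangle B')=0$. The control property then gives $\norm{G}(B\triangle B')=0$, hence $G(B)=G(B')$. Your Steps~2--3 (Teleman approximation, Egorov, Fact~\ref{f:odhad}) then go through for $B'$.

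The paper avoids the issue entirely by never extending $D$ beyond continuous functions, which is exactly your fallback. It takes compact $L$ and open $U$ with $L\subset B\subset U$ and $\lambda(U\setminus L)<\delta$, and a Urysohn function $\chi_L\le f\le\chi_U$. It then splits $G_{y^*,x}(B)-y^*(Sx)$ into three terms:
\begin{itemize}
\item $\int(\chi_B-f)\di G_{y^*,x}$, bounded directly by $\norm{G}(U\setminus L)$;
\item $\int(f-Df)\di G_{y^*,x}$, which vanishes because $DG_{y^*,x}=G_{y^*,x}$ (Lemma~\ref{l:decko-andproper}(f)) and $f$ is continuous;
\item $\int(Df-z)\di G_{y^*,x}$, handled as in your Step~3, with a Chebyshev estimate in $L_1(\lambda)$ in place of Egorov.
\end{itemize}
Your worry that this route requires ``controlling $\abs{G_{y^*}}$ through $D$'' is unfounded. $D$ is applied only to the continuous $f$, never to the error $\chi_B-f$. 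Regularity of $\lambda$ is used on $B$ before $D$ enters, so no monotone class extension of Lemma~\ref{l:decko-andproper} and no Baire/Borel bookkeeping are needed.
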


\begin{proof}
We know that $G$  it is strongly bounded with a maximal control $\TT$-invariant measure $\lambda\in M^+(B_{X^*})$. Let $B\in\Borel(B_{X^*})$ be arbitrary and $\ep\in (0,1)$ be fixed.
Let $\delta\in (0,\ep)$ be chosen such that $\norm{G}(E)<\ep$ whenever $\lambda(E)<\delta$, $E\in\Borel(B_{X^*})$. We find a compact set $L\subset B_{X^*}$ and an open set $U\subset B_{X^*}$ with $L\subset B\subset U$ such that $\lambda(U\setminus B)<\delta$. Let $f\in C(B_{X^*})$ be such that $\chi_L\le f\le \chi_U$.

Let $Df(t)=D_t(f)$, $t\in B_{X^*}$, where $D_t$ is the unique measure defined in Section~\ref{ss:preduals}. Then $Df$ is a strongly affine function (see Lemma~\ref{l:decko-andproper}) and hence there exists a sequence $(a_n)\subset X$ with $\norm{a_n}\le \norm{Df}\le \norm{f}\le 1$ converging to $Df$ $\lambda$-almost everywhere (see Lemma~\ref{l:approx-teleman}). Then $\int_{B_{X^*}} \abs{a_n-Df}\di\lambda\to 0$. Let $z$ be an element of the sequence $(a_n)$ satisfying $\int_{B_{X^*}} \abs{z-Df}\di\lambda<\delta^2$. We define the operator $S\colon E\to F$ as $Se=T(z\otimes e)$, $e\in E$. 

We claim $\norm{G(B)-S}<\ep(4+\norm{T})$. Indeed, let $x\in B_E$ and $y^*\in B_{F^*}$ be arbitrary.
Then
\begin{equation}
\label{eq:odhadyG}
\begin{gathered}
 \abs{y^*(G(B)x-Sx)}=\abs{G_{y^*,x}(B)-y^*(\int_{B_{X^*}} (z\otimes x)\di G)}\\
 \le \abs{\int_{B_{X^*}}(\chi_B-f)\di G_{y^*,x}}+\abs{\int_{B_{X^*}} f\di G_{y^*,x}-\int_{B_{X^*}} Df\di G_{y^*,x}}+\abs{\int_{B_{X^*}} (Df-z)\di G_{y^*,x}}.
\end{gathered}
\end{equation}
Let $H=\{s\in B_{X^*}\setsep \abs{z(s)-Df(s)}\ge \delta\}$. Then
\[
\lambda(H)=\int_H 1\di\lambda=\frac{1}{\delta}\int_H\delta\di\lambda\le 
\frac1{\delta}\int_{B_{X^*}} \abs{z-Df}\di\lambda\le \frac{\delta^2}{\delta}=\delta.
\]
Thus $\norm{G}(H)<\ep$. Hence
\[
\begin{aligned}
\abs{\int_{B_{X^*}} Df\di G_{y^*,x}-\int_{B_{X^*}} z\di G_{y^*,x}}&\le
\int_H \abs{Df-z}\di \abs{G_{y^*,x}}+\int_{B_{X^*}\setminus H} \abs{Df-z}\di\abs{G_{y^*,x}}\\
&\le 2\norm{f}\abs{G_{y^*,x}}(H)+\delta\abs{G_{y^*,x}}(K)\\
&\le 2\abs{G_{y^*}}(H)+\ep\norm{T}\le 2\norm{G}(H)+\ep\norm{T}\\
&\le \ep(2+\norm{T}).
\end{aligned}
\]
Since $G_{y^*,x}$ is boundary and antihomogeneous, $D(G_{y^*,x})=G_{y^*,x}$ (see Lemma~\ref{l:decko-andproper}(f)), and thus
\[
\int_{B_{X^*}} f\di G_{y^*,x}=\int_{B_{X^*}} f\di DG_{y^*,x}=\int_{B_{X^*}} Df\di G_{y^*,x}.
\]
Hence, we obtain from \eqref{eq:odhadyG}
\[
\begin{aligned}
\abs{y^*(G(B)x-Sx)}&\le 2\abs{G_{y^*,x}}(U\setminus L) +\ep(2+\norm{T})\\
&\le 2\abs{G_{y^*}}(U\setminus L)+\ep (2+\norm{T})\\
&\le 2\norm{G}(U\setminus L)+\ep(2+\norm{T})\\
&\le \ep(2+2+\norm{T}).
\end{aligned}
\]
Since $y^*\in B_{F^*}$ and $x\in B_{E}$ are arbitrary,  we get $\norm{G(B)-S}\le \ep(4+\norm{T})$, which concludes the proof.
\end{proof}

\begin{lemma}
\label{l:l1preduincreas}
Let $X$ be a Banach space and $Y_1\subset Y_2\subset Y_3\subset \cdots\subset X$ be a nondecreasing sequence of closed subspaces that are $L_1$-preduals. Then $Y=\ov{\bigcup_{n=1}^\infty Y_n}$ is an $L_1$-predual.
\end{lemma}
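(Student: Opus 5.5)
We use a characterization of $L_1$-preduals that passes to closures of increasing unions. The most convenient one is the Lindenstrauss characterization: a Banach space $Y$ is an $L_1$-predual if and only if $Y^{**}$ is an injective ($\mathcal P_1$) space. Equivalently, and more usefully here, $Y$ is an $L_1$-predual if and only if for every $\ep>0$ and every finite set $\{y_1,\dots,y_n\}\subset Y$ there is a finite-dimensional subspace $Z\subset Y$ and a linear map $A\colon Z\to \ell_\infty^m$ (for some $m$) with $\norm{A}\norm{A^{-1}}<1+\ep$, such that $\dist(y_i,Z)<\ep$ for all $i$. This is the local $\mathcal L_{\infty,1+}$ description due to Lindenstrauss, Lazar and Lindenstrauss, and Michael and Pełczyński, recorded in \cite{lacey}. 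The plan is to verify this local condition for $Y$.

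Fix $\ep>0$ and $y_1,\dots,y_n\in Y$. Since $\bigcup_k Y_k$ is dense in $Y$, we can pick $k$ and $u_1,\dots,u_n\in Y_k$ with $\norm{y_i-u_i}<\ep/2$. Here the nondecreasing hypothesis is what lets us take a single $k$ working for all $i$. Since $Y_k$ is an $L_1$-predual, the local characterization applied inside $Y_k$ gives a finite-dimensional $Z\subset Y_k\subset Y$ that is $(1+\ep)$-isomorphic to some $\ell_\infty^m$ and satisfies $\dist(u_i,Z)<\ep/2$. Then $\dist(y_i,Z)<\ep$ for every $i$, so the local condition holds for $Y$. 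Completeness of $Y$ is automatic, since $Y$ is a closed subspace of $X$. Hence $Y$ is an $L_1$-predual.

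If one prefers to use the intersection property characterization instead (real case: every family of four pairwise intersecting balls has nonempty intersection, in the weak form for finitely many balls with approximate centres), the argument has the same structure. Balls in $Y$ with centres $y_i$ are approximated by balls with centres in some $Y_k$ and slightly enlarged radii. One applies the property in $Y_k$ and then lets the enlargement tend to $0$. This route needs the version of the characterization that tolerates $\ep$-perturbations (the "almost intersection" form), because we cannot guarantee exact intersection after approximating. The complex case has an analogous characterization via \cite[Chapter 7, \S 23]{lacey}.

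The main obstacle is therefore not the approximation argument itself, which is routine, but making sure the characterization we invoke is genuinely local and stable under $\ep$-perturbation. It must also be valid in both the real and complex settings. I would first cite the $\mathcal L_{\infty,1+\ep}$ characterization for real and complex spaces (Lindenstrauss–Lazar for the real case, Nielsen–Olsen and \cite{lacey} for the complex case), since with it the proof reduces to the two-line density argument above.
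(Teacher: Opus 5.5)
Your proof is correct and is essentially the paper's argument. The paper likewise verifies the local characterization \cite[Chapter 7, \S 23, Theorem 2(d)]{lacey}, namely that every finite set lies within $\ep$ of the range of a $(1+\ep)$-isomorphic copy of $\ell_\infty^m$: it approximates the finite set inside a single $Y_n$, applies the characterization there, and absorbs the error by the triangle inequality (the $Y^{**}$ and intersection-property remarks are not needed).
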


\begin{proof}
We verify condition (d) of \cite[Chapter 7, \S 23, Theorem 2]{lacey}. So let $A\subset Y$ be a finite set and $\ep>0$ be given.  Let $n\in\en$ be such that $\dist(x,Y_n)<\ep$, $x\in A$. For each $x\in A$ we select $y_x\in Y_n$ such that $\norm{x-y_x}<\ep$. Then $C=\{y_x\setsep x\in A\}\subset Y_n$ is a finite set in an $L_1$-predual $Y_n$, and thus there exists by \cite[Chapter 7, \S 23, Theorem 2(d)]{lacey} an operator $J\colon \ell_\infty(n,\ef)\to Y_n$ such that $(1+\ep)^{-1}\norm{y}\le\norm{Jy}\le (1+\ep)\norm{y}$, $y\in\ell_\infty(n,\ef)$ and $\dist(y_x, \Rng J)<\ep$, $x\in A$. Then $\dist(x,\Rng J)<2\ep$. Hence $Y$ satisfies the required condition, which concludes the proof.    
\end{proof}

\begin{lemma}
\label{l:separablredukce}
    Let $X$ be an $L_1$-predual and $E$ be a Banach space. Let $\F\subset C(B_{X^*},E)$ be separable and $\M\subset M_{\bnd}(B_{X^*})$ countable. 

    Then there exists a separable $L_1$-predual $Y\subset X$ such that for the restriction mapping $\pi\colon B_{X^*}\to B_{Y^*}$ the following assertions hold:
    \begin{itemize}
\item[(a)] For each $f\in \F$ there exists a function $f'\in C(B_{Y^*},E)$ with $f=f'\circ \pi$.
\item[(b)] The measure $\pi(\mu)$ is boundary for each $\mu\in \M$.        
    \end{itemize}
\end{lemma}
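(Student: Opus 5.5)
Build $Y$ as the closure of an increasing union $Y_1\subset Y_2\subset\cdots$ of separable $L_1$-preduals, each step closing off one more layer of countably many requirements; Lemma~\ref{l:l1preduincreas} then makes $Y$ an $L_1$-predual. The basic tool is \cite[Chapter 7, \S 23, Lemma 1]{lacey}: every separable subset of an $L_1$-predual lies in a separable $L_1$-predual subspace. Note that any closed separable $Z\subset X$ gives a restriction map $\pi_Z\colon B_{X^*}\to B_{Z^*}$ whose image is a metrizable compact space. We enlarge $Z$ countably often, so that enough functions of $X$ "pass to" $B_{Z^*}$.

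\emph{Part (a).} Fix a countable dense set $\{f_k\}\subset\F$. For each $f_k$ and $m\in\en$, uniform continuity of $f_k$ on $B_{X^*}$ (compact, weak$^*$ topology) gives finitely many $z_1,\dots,z_p\in X$ and $\eta>0$ such that $\max_j\abs{(s-t)(z_j)}<\eta$ implies $\norm{f_k(s)-f_k(t)}<1/m$. This is obtained by covering the compact set $\{(s,t)\setsep \norm{f_k(s)-f_k(t)}\ge 1/m\}$ by basic weak$^*$ neighborhoods of the diagonal's complement. Put all these countably many $z$'s into $Y_1$. Then $\pi(s)=\pi(t)$ forces $f_k(s)=f_k(t)$ for all $k$, and hence for all $f\in\F$ by density. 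A continuous function on $B_{X^*}$ that is constant on fibers of the continuous surjection $\pi$ between compact spaces factors as $f'\circ\pi$ with $f'$ continuous, since $\pi$ is a quotient map. Moreover, (a) remains true for any larger subspace.

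\emph{Part (b).} This is the main obstacle. For a separable $Z$, $B_{Z^*}$ is metrizable, so $\pi_Z\mu$ is boundary iff $\abs{\pi_Z\mu}(B_{Z^*}\setminus\ext B_{Z^*})=0$. By Mokobodzki, it suffices that $\abs{\pi_Z\mu}(g^*_Z)=\abs{\pi_Z\mu}(g)$ for $g$ in a countable dense set $\{g_n\}$ of $C(B_{Z^*},\er)$. Here $g^*_Z$ is the upper envelope computed with $A(B_{Z^*})$, i.e.\ an infimum over functions $\Re z+c$ with $z\in Z$, $c\in\er$. Since $\abs{\pi\mu}\le\pi\abs{\mu}$, I will work with $\nu=\abs{\mu}$, which is maximal on $B_{X^*}$; it is enough that $\pi\nu$ is maximal, because any measure dominated by a maximal one is carried where it is.

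Given $Z$ and $g\in C(B_{Z^*},\er)$, the function $g\circ\pi_Z$ is continuous on $B_{X^*}$. Maximality of $\nu$ gives $\nu((g\circ\pi_Z)^*)=\nu(g\circ\pi_Z)$. The envelope there is an infimum of continuous affine functions $\Re x+c$ with $x\in X$, and it is a downward-directed infimum. By Dini-type reasoning, monotone convergence for a decreasing sequence chosen to approximate the infimum in $L_1(\nu)$, one can choose countably many $x^{(g)}_i\in X$ with $\inf_i(\Re x^{(g)}_i+c_i)$ giving the same $\nu$-integral. Hence if these $x^{(g)}_i$ lie in a larger $Z'$, the envelope of $g\circ\pi_{Z'}$ computed in $Z'$ has $\nu$-integral equal to $\nu(g)$. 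So at step $n$ we add to $Y_n$, for each $\mu\in\M$ and each $g$ in a countable dense subset of $C(B_{Y_n^*},\er)$, the countably many witnesses, and then enlarge to a separable $L_1$-predual $Y_{n+1}$.

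In the limit $Y$, the dense family $\bigcup_n C(B_{Y_n^*})\circ$(restriction) is dense in $C(B_{Y^*})$ by Stone--Weierstrass. Each such $g$ satisfies the Mokobodzki equality for $\pi\nu$, since envelopes only decrease as the subspace grows. Continuity of $g\mapsto g^*$ in the sup norm then yields maximality of $\pi\nu$, and hence (b). The delicate point I expect to check is the countable reduction of the envelope with respect to $\nu$, together with the compatibility of envelopes under $\pi$, namely $(g\circ\pi)^*\le g^*_{Y}\circ\pi$.
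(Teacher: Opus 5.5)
Your proposal is correct and follows essentially the paper's route. Both build an increasing sequence of separable $L_1$-preduals, each enlarged by countably many affine witnesses for the upper envelopes relative to the countably many (positive) measures. The closure of the union is an $L_1$-predual by Lemma~\ref{l:l1preduincreas}, and maximality of $\pi\mu$ is checked by the Mokobodzki test plus a density argument.

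The differences are only technical. You prove (a) by hand via uniform continuity instead of citing \cite[Lemma 6.2]{Kaspu-studia}. You test against a countable dense set of all continuous functions and use Stone--Weierstrass, where the paper uses finite maxima of affine functions and \cite[Corollary I.1.3]{alfsen}. One small correction: the affine majorants themselves are not downward directed. Their finite minima (the paper's $-\W(\cdot)$) are, and that is the family your monotone-convergence step should use.
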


\begin{proof}
For a space $Z\subset X$ we denote by $\pi_Z$ the restriction mapping $\pi_Z\colon X^*\to Z^*$.
Since any measure $\mu\in M_{\bnd}(B_{X^*})$ can be written as $\mu=a_1\mu_1-a_2\mu_2+i(a_3\mu_3-a4\mu_4)$ with $a_1,\dots, a_4$ positive and $\mu_1,\dots,\mu_4\in M^1(B_{X^*})$ maximal, we may assume that $\M$ consists of positive measures .

    Using \cite[Lemma 6.2]{Kaspu-studia} we find a separable subspace $A\subset X$ such that for each $f\in\F$ there exists $f'\in C(B_{A^*},E)$ with $f=f'\circ\pi_A$.
    By \cite[Chapter 7, \S 23, Lemma 1]{lacey}, we can find a separable $L_1$-predual $A\subset Y_0\subset X$. Then the functions from $\F$ factorize through $\pi_{Y_0}$.

For $\A\subset C(B_{X^*})$, let $\W(\A)$ denote the following set of real functions on $B_{X^*}$:
\[
\W(\A)=\{(\Re a_1+c_1)\vee\cdots\vee (\Re a_m+c_m)\setsep m\in\en, a_1,\dots,a_m\in \A, c_1,\dots, c_m\in\qe\}.
\]
Now we construct inductively countable sets $\A_0\subset \A_1\subset \cdots\subset X$ such that:
\begin{enumerate}[(1)]
\item $\A_0$ is a dense subset of $Y_0$;
\item $\ov{\A_n}$ is an $L_1$-predual in $X$ for every $n\ge 0$;
\item for each $n\ge 0$ the following holds:
\[
\forall \mu\in\M\forall \ep>0\forall g\in \W(\A_n)\exists h\in -\W(\A_{n+1})\colon g\le h \text{ and }\mu(h-g)<\ep.
\]    
\end{enumerate}

The construction runs as follows. In the first step, we select a countable family $\B\subset X$ such that (3) holds for $\A_0$ and $-\W(\B)$. This is possible, because any $\mu\in\M$ satisfies by maximality 
\[
\mu(g)=\mu(g^*)=\inf\{\mu(k)\setsep k\ge g, k\text{ concave}\}=\inf\{\mu(k)\setsep k\ge g, k\in -\W(A(B_{X^*},\er)\}
\]
for any $g\in C(B_{X^*},\er)$ convex. Then we use \cite[Chapter 7, \S 23, Lemma 1]{lacey} to obtain a separable $L_1$-predual $B$ with  $\ov{\span}\B\subset B\subset X$. Let $\A_1$ be a countable dense set in $B$ which contains $\B\cup \A_0$.

In the inductive step, we pick a countable family $\B\subset X$ such that (3) is satisfied for $\W(\A_n)$ by functions from $-\W(\B)$. Again we enlarge the set $\B\cup\A_n$ into a countable set $\A_{n+1}$ such that $\ov{\A_{n+1}}$ is an $L_1$-predual.
This finishes the construction.

We take $Y=\ov{\bigcup_{n=0}^\infty \A_n}$.  By Lemma~\ref{l:l1preduincreas}, $Y$ is an $L_1$-predual. Let $\pi\colon B_{X^*}\to B_{Y^*}$ be the restriction mapping.
Let $g\in C(B_{Y^*},\er)$ be a convex function and $\ep>0$. Then there exists a convex function $g'\in \W(A(B_{Y^*},\er))$ such that $\norm{g-g'}_{C(B_{Y^*})}<\ep$, see \cite[Corollary I.1.3]{alfsen}.
Then $g'=(\Re y_1+c_1)\vee\cdots\vee (\Re y_m+c_m)$, where $y_1,\dots, y_m\in Y$, $c_1,\dots, c_m\in\er$ and $m\in\en$. By the density of $\bigcup_{n=0}^\infty \A_n$ in $Y$, we can find a function $g''=(\Re z_1+d_1)\vee\cdots\vee (\Re z_m+d_m)$ with $\norm{g'-g''}_{C(B_{Y^*})}<\ep$, where $z_1,\dots, z_m\in \bigcup_{n=0}^\infty \A_n$ and $d_1,\dots, d_m\in\qe$. Then there exists $n\in\en$ such that $z_1,\dots, z_m\in \A_{n}$. Then the function
\[
g'''(t)=(\Re z_1(t)+d_1)\vee\cdots\vee (\Re z_m(t)+d_m),\quad t\in B_{X^*},
\]
is in $\W(\A_n)$ and satisfy $g'''=g''\circ \pi$.
Let $h\in-\W(\A_{n+1})$ satisfy $\mu(h-g''')<\ep$ and $h\ge g'''$ on $B_{X^*}$. Then $h=(\Re x_1+e_1)\wedge\cdots\wedge (\Re x_k+e_k)$ for some $x_1,\dots, x_k\in Y$, $e_1,\dots, e_k\in \qe$ and $k\in\en$. Then 
\[
h'(s)=(\Re x_1(s)+e_1)\wedge\cdots\wedge (\Re x_k(s)+e_k),\quad s\in B_{Y^*},
\]
is a concave continuous function on $B_{Y^*}$ with $h=h'\circ \pi$. Set $h''=h'+2\ep$. 
Then 
\[
g\le g'+\ep\le g''+2\ep\le h'+2\ep=h''
\]
and
\[
\begin{aligned}
(\pi\mu)(g^*)&\le (\pi\mu)(h'')=\mu(h''\circ\pi)=\mu(h'\circ\pi)+2\ep\norm{\mu}\\
&=\mu(h)+2\ep\norm{\mu}\le \mu(g''')+\ep(1+2\norm{\mu})\\
&=\mu(g''\circ\pi)+\ep(1+2\norm{\mu})=(\pi\mu)(g'')+\ep(1+2\norm{\mu})\\
&\le (\pi\mu)(g+2\ep)+\ep(1+2\norm{\mu})\le (\pi\mu)(g)+\ep(1+4\norm{\mu}).
\end{aligned}
\]
Thus $(\pi\mu)(g^*)=(\pi\mu)(g)$, which gives the maximality of $\pi\mu$.
\end{proof}

\begin{lemma}
    \label{l:redukce}
    Let $X$ be an $L_1$-predual and $E,F$ be Banach spaces.
Let $G$ be the representing measure for a strongly bounded operator $T\colon \inxe\to F$ with a maximal invariant control measure $\lambda$.    
Let $Y\subset X$ be an $L_1$-predual and $\pi\colon B_{X^*}\to B_{Y^*}$ be the restriction mapping satisfying that $\pi\lambda$ is maximal. Let $T'\colon Y\wh{\otimes}_\ep E=A_{\hom}(B_{Y^*},E)\to F$ be defined as $T'f'=T(f'\circ \pi)$, $f'\in A_{\hom}(B_{Y^*},E)$.

Then $T'$ has the representing measure $G'=\pi(G)$, where $G'(B')=G(\pi^{-1}(B'))$, $B'\in \Borel(B_{Y^*})$. Moreover, $\pi\lambda$ is a maximal invariant control measure for $G'$. 
\end{lemma}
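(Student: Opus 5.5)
We have to check that $G'=\pi(G)$ satisfies every item of Definition~\ref{d:repremeasure-predual} for $T'$, and then invoke the uniqueness part of Theorem~\ref{t:saab-saab-predual}. Finite additivity of $G'$ is inherited from $G$. For $y^*\in F^*$ we have $(G')_{y^*}=\pi(G_{y^*})$ as $E^*$-valued measures, since $G'(B')=G(\pi^{-1}(B'))$. This image measure is countably additive, regular and of bounded variation, with $\abs{\pi(G_{y^*})}\le\pi(\abs{G_{y^*}})$. It is antihomogeneous because $\pi$ commutes with multiplication by $\alpha\in\TT$. The semivariation satisfies $\norm{G'}(B')=\norm{G}(\pi^{-1}(B'))$, because decompositions of $B'$ pull back to decompositions of $\pi^{-1}(B')$; in particular $\norm{G'}(B_{Y^*})\le\norm{G}(B_{X^*})<\infty$.

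For the duality items, take $f'\in A_{\hom}(B_{Y^*},E)$. Then
\[
\int f'\di (G')_{y^*}=\int f'\circ\pi\di G_{y^*}=(T^*y^*)(f'\circ\pi)=y^*(T'f')=((T')^*y^*)(f').
\]
This gives $(T')^*y^*=(G')_{y^*}$ on $Y\wh{\otimes}_\ep E$. The weak$^*$ continuity of $y^*\mapsto (G')_{y^*}|_{Y\wh{\otimes}_\ep E}$ follows because this map equals $(T')^*$.

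The main obstacle is boundarity of $(G')_{y^*}$, and the control measure handles it. Since $\lambda$ controls $G$, we have $\abs{G_{y^*}}\ll\lambda$, so $\abs{(G')_{y^*}}\le\pi\abs{G_{y^*}}\ll\pi\lambda$. The measure $\pi\lambda$ is maximal by hypothesis. A positive measure absolutely continuous with respect to a maximal measure is maximal: if $\nu\le c\,\pi\lambda$ fails to be maximal, use the Mokobodzki test. For convex continuous $g$ we have $g^*\ge g$, and $\pi\lambda(g^*-g)=0$ forces $g^*=g$ $\pi\lambda$-a.e., hence $\nu$-a.e. For absolutely continuous but not dominated measures, approximate via Radon--Nikodym densities truncated at level $c$. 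Thus $(G')_{y^*}\in M_{\bnd,\ahom}(B_{Y^*},E^*)$ (Lemma~\ref{l:prenos}-style reduction to the scalar measures $\mu_x$ is available if needed).

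It remains to get the norm equality. We already have $\norm{G'}(B_{Y^*})\le\norm{G}(B_{X^*})=\norm{T}$. For $\norm{G'}(B_{Y^*})\ge\norm{T'}$, use \eqref{eq:semivar}: the bound $\abs{((T')^*y^*)(f')}\le\norm{f'}\,\abs{(G')_{y^*}}(B_{Y^*})$ holds for every $y^*\in B_{F^*}$. For the reverse inequality, $\norm{(G')_{y^*}}$ is at most the norm of $(T')^*y^*$. Indeed, $(G')_{y^*}$ is a boundary antihomogeneous measure representing $(T')^*y^*$, and by \cite[Theorem 2]{saabs-illinois} such a measure is unique and norm-preserving. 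Taking suprema over $y^*$ yields $\norm{G'}(B_{Y^*})=\norm{T'}$, and uniqueness in Theorem~\ref{t:saab-saab-predual} gives that $G'$ is the representing measure.

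Finally, $\pi\lambda$ is $\TT$-invariant because $\lambda$ is and $\pi$ is homogeneous, and it is maximal by assumption. It controls $G'$: if $\pi\lambda(B')<\delta$, then $\lambda(\pi^{-1}B')<\delta$, so $\norm{G'}(B')=\norm{G}(\pi^{-1}B')<\ep$. In particular $G'$ is strongly bounded.
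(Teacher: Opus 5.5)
Your proof is correct and is essentially the paper's: the key step, boundarity of $\pi(G_{y^*})$ obtained from $\abs{G_{y^*}}\ll\lambda$ and the maximality of $\pi\lambda$, is identical. The paper only shortcuts your separate checks of weak$^*$ continuity and of $\norm{T'}=\norm{G'}(B_{Y^*})$: it takes the representing measure $G'$ of $T'$ given by Theorem~\ref{t:saab-saab-predual} and observes that $(G')_{y^*}-\pi(G_{y^*})$ is a boundary antihomogeneous measure annihilating $Y\wh{\otimes}_\ep E$, hence zero.

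One small correction: pulling back partitions only gives $\norm{\pi(G)}(B')\le\norm{G}(\pi^{-1}(B'))$, since not every partition of $\pi^{-1}(B')$ is a pullback, but that inequality is all you use. Also, the truncation remark is superfluous, because $\abs{(G')_{y^*}}\ll\pi\lambda$ already forces $\{g^*>g\}$ to be null.
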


\begin{proof}
Let $G'\colon \Borel(B_{Y^*})\to L(E,F^{**})$ be the representing measure for $T'$.
For each $f'\in Y\wh{\otimes}_\ep E$ and $y^*\in B_{F^*}$ we have
\[
y^*(T(f'\circ\pi))=(T^*y^*)(f'\circ\pi)=\int_{B_{X^*}} (f'\circ\pi)\di G_{y^*}=\int_{B_{Y^*}} f'\di \pi(G_{y^*})
\]
and
\[
y^*(T'f')=\int_{B_{Y^*}} f'\di (G')_{y^*}.
\]
Since $T'f'=T(f'\circ\pi)$, we have $(G')_{y^*}-\pi(G_{y^*})\in A_{\hom}(B_{Y^*},E)^\perp$.

Next, we claim that $\pi(G_y^*)$ is a boundary measure. Indeed, we have
\begin{equation}
\label{eq:control}
\abs{\pi(G_{y^*})}\le \pi(\abs{G_{y^*}}).
\end{equation}
Let $g\in C(B_{Y^*},\er)$ be a convex function and $H\subset \{s\in B_{Y^*}\setsep g^*(s)>g(s)\}$ be any Borel set. 
Since $\pi\lambda$ is maximal, $0=\pi\lambda(H)=\lambda(\pi^{-1}(H))$. Hence $\abs{G_{y^*}}(\pi^{-1}(H))=0$, which in turn yields $\pi(\abs{G_{y^*}})(H)=0$. In view of \eqref{eq:control}, $\pi(G_{y^*})$ is zero on $B$. Since $g$ is arbitrary, the measure $\pi(G_{y^*})$ is boundary.

Since $Y$ is an $L_1$-predual, $(A_{\hom}(B_{Y^*},E))^\perp\cap M_{\bnd,\ahom}(B_{Y^*},E^*)=\{0\}$, and thus $(G')_{y^*}=\pi(G_{y^*})$.
Thus $\pi(G)=G'$ is  the representing measure for $T'$. Moreover, we have
\[
\lim_{B\in\Borel(B_{Y^*}),\pi\lambda(B)\to 0}\norm{G'}(B)=0.
\]
Indeed, let $\ep>0$ be given and $\delta>0$ is chosen such that $\norm{G}(B)<\ep$ whenever $\lambda(B)<\ep$, $B\in\Borel (B_{X^*})$. We fix a Borel set $B\subset B_{Y^*}$ with $\pi\lambda(B)<\delta$. Then
$\lambda(\pi^{-1}(B))<\delta$, and thus $\norm{G}(\pi^{-1}(B))<\ep$. Hence for any $y^*\in B_{F^*}$ we have
\[
\abs{(G')_{y^*}}(B)=\abs{\pi(G_{y^*})}(B)\le \pi(\abs{G_{y^*}})(B)=\abs{G_{y^*}}(\pi^{-1}(B))\le \norm{G}(\pi^{-1}(B))\le \ep.
\]
Hence $\norm{G'}(B)\le \ep$.
\end{proof}

\begin{cor}
    \label{c:operbecka}
    Let $X$ be an $L_1$-predual and $E,F$ be Banach spaces.
Let $G$ be the representing measure for a strongly bounded operator $T\colon \inxe\to F$. Let $\I$ denote a category of operators between Banach spaces which is closed with respect to the norm convergence. 

Then $G(B)\in\I$ for each $B\in \Borel(B_{X^*})$ provided $T\in\I$. This in particular applies to the categories of operators listed in Section~\ref{sec:oper}.    
\end{cor}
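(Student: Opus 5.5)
The corollary follows directly from Lemma~\ref{l:approx}. Fix $B\in\Borel(B_{X^*})$ and assume $T\in\I$. By Theorem~\ref{t:controlmeasure}(a), $G(B)$ belongs to $L(E,F)$. So it suffices to show that $G(B)$ is a norm limit of operators from $E$ to $F$ that lie in $\I$.

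For each $n\in\en$, Lemma~\ref{l:approx} applied with $\ep=1/n$ gives $z_n\in X$ such that the operator $S_n\in L(E,F)$, $S_ne=T(z_n\otimes e)$, satisfies $\norm{G(B)-S_n}<1/n$. Write $J_n\colon E\to\inxe$ for the map $J_ne=z_n\otimes e$. It is a bounded linear operator, since $\norm{z_n\otimes e}_\ep=\norm{z_n}\norm{e}$, and $S_n=T\circ J_n$.

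The next step is to check that $S_n\in\I$. Every category listed in Section~\ref{sec:oper} is an operator ideal: it is stable under composition with bounded operators on the right. For compact, weakly compact, completely continuous, weakly precompact and unconditionally converging operators this is standard. Bounded operators send weakly unconditionally Cauchy series to weakly unconditionally Cauchy series, weakly Cauchy sequences to weakly Cauchy sequences, and bounded sequences to bounded sequences.

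For pseudo weakly compact and Right completely continuous operators, the needed input is that a bounded operator is Right-to-Right continuous. This holds because $J^*$ maps weakly compact subsets of $(\inxe)^*$ to weakly compact subsets of $E^*$, and the Right topology is the topology of uniform convergence on weakly compact subsets of the dual. Hence $S_n=T\circ J_n\in\I$.

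So I read the hypothesis on $\I$ as ``closed under norm limits and under composition with operators on the right''. That is the sense in which the phrase ``category of operators'' should be taken.

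Finally, $S_n\to G(B)$ in norm and $\I$ is norm closed, so $G(B)\in\I$. For the listed classes, norm closedness is again classical: each of them forms a closed operator ideal (see \cite{pe-vi-wr-y}, \cite{kacena-jmaa}, \cite{krulisova} for pwc and Rcc).

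The only real work is already contained in Lemma~\ref{l:approx}, so the main point to verify is right ideal stability for the less standard classes pwc and Rcc. That stability reduces to the Right continuity of bounded operators noted above.
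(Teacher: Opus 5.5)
Your proof is correct and follows the paper's route. Lemma~\ref{l:approx} produces operators $T(z\otimes\cdot)$ converging in norm to $G(B)$, and norm closedness of $\I$ finishes; you also make explicit the right-ideal stability ($T\circ J_n\in\I$) that the paper's ``immediately follows'' leaves tacit. The only difference is that the paper proves norm closedness of each listed class directly (a diagonal argument for wpc, bidual arguments for wcc and Rcc, a direct estimate for pwc, and \cite{howard} for uc) instead of citing it.
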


\begin{proof}
The assertion immediately follows from Lemma~\ref{l:approx}.
For the operators listed in Section~\ref{sec:oper} we use the following reasoning:

(c) and (wc) Compact and weakly compact operators are well known to be closed with respect to the norm convergence.

(cc) For completely continuous operators, see \cite[Chapter I., Exercise 9.3]{defant-floret}.
\item

(wpc) For weakly precompact operators, consider a sequence $(U_n)$ of  weakly precompact operators between Banach spaces $E$ and $F$ that converges in the norm to $U\in L(E,F)$.
Let $(x_j)\subset B_{E}$ be given. Inductively we construct  infinite sets $\en\supset N_1\supset N_2\supset\cdots$ such that $(U_nx_j)_{j\in N_n}$ is weakly Cauchy.
Let $M=\{m_1,m_2,m_3,\dots\}\subset \en$ be a diagonal set, i.e., we pick $m_1\in N_1$, $m_2\in N_2\cap (m_1,\infty)$, $m_3\in N_3\cap (m_2,\infty)$ and so on. We claim that $(Ux_m)_{m\in M}$ is weakly Cauchy. 

Indeed, let $y^*\in F^*$ and $\ep>0$ be given. We select $n\in \en$ such that $\norm{U-U_n}<\ep$ and let $m_0\in M$ be such that $\abs{y^*(U_nx_{m}-U_nx_{m'})}<\ep$ for $m,m'\in M$ with $m,m'\ge m_0$.
Then for $m,m'\in M$ with $m,m'\ge m_0$ we have
\[
\begin{aligned}
\abs{y^*(Ux_m)-y^*(Ux_{m'})}&\le \abs{y^*(Ux_m)-y^*(U_nx_m)}+\abs{y^*(U_nx_m)-y^*(U_nx_{m'})}+\\
&\quad+\abs{y^*(U_nx_{m'})-y^*(Ux_{m'})}\\
&\le \norm{y^*}\norm{U-U_n}+\ep+\norm{y^*}\norm{U_n-U}\le \ep(2\norm{y^*}+1).
\end{aligned}
\]
Hence $(Ux_m)_{m\in M}$ is weakly Cauchy and $U$ is weakly precompact.

(pwc) For pseudo weakly compact operators, consider a sequence $(U_n)$ of pseudo weakly compact operators between Banach spaces $E$ and $F$ that converges in the norm to $U\in L(E,F)$.
Let $(x_j)\subset E$ Right converge to $0$. Then $(x_j)$ is bounded by a number $M\ge 0$. Then for any $\ep>0$ there exists $n\in\en$ with $\norm{U_n-U}<\ep$. Let $j_0\in \en$ be such that $\norm{U_n(x_j)}<\ep$ for $j\ge j_0$. Then we have the following estimate for $j\ge j_0$:
\[
\norm{Ux_j}\le \norm{Ux_j-U_nx_j}+\norm{U_nx_j}<\ep(M+1).
\]
Hence $Ux_j\to 0$ in the norm and $U$ is pseudo weakly compact.

 (wcc) For weakly completely continuous operators, consider a sequence $(U_n)$ of weakly completely continuous operators between Banach spaces $E$ and $F$ that converges in the norm to $U\in L(E,F)$. Let $(x_j)\subset E$ be weakly Cauchy in $E$. Then $(x_j)$ considered as a sequence in $E\subset E^{**}$ weak$^*$-converges to some $x^{**}\in E^{**}$.
By the assumption, 
\[
z_n=U_n^{**}x^{**}=\text{weak}^*-\lim_{j\to \infty} U_n^{**}x_{j}
\]
is an element of $F$ for each $n\in\en$. Since 
\[
z=U^{**}x^{**}=\text{weak}^*-\lim_{j\to \infty} U^{**}x_{j}\in F^{**},
\]
and 
\[
\norm{z-z_n}=\norm{U^{**}x^{**}-U_n^{**}x^{**}}\le \norm{U-U_n}\norm{x^{**}},
\]
the vector $z$ is also contained in $F$. Hence $U$ is weakly completely continuous.

(Rcc) For Right completely continuous operators, consider a sequence $(U_n)$ of Right completely continuous operators between Banach spaces $E$ and $F$ that converges in the norm to $U\in L(E,F)$. Let $(x_j)\subset E$ be Right Cauchy in $E$. Then $(x_j)$ is bounded by some number $M>0$, and considered as a sequence in $E\subset E^{**}$ weak$^*$ converges to some $x^{**}\in E^{**}$.
By assumption, 
\[
z_n=U_n^{**}x^{**}=\text{weak}^*-\lim_{j\to \infty} U_n^{**}x_{j}
\]
is an element of $F$ for each $n\in\en$. Since 
\[
z=U^{**}x^{**}=\text{weak}^*-\lim_{j\to \infty} U^{**}x_{j}\in F^{**},
\]
and 
\[
\norm{z-z_n}=\norm{U^{**}x^{**}-U_n^{**}x^{**}}\le \norm{U-U_n}\norm{x^{**}},
\]
the vector $z$ is also contained in $F$. 

We have to prove that $Ux_j\to z$ in the Right topology. To this end we fix a weakly compact subset $L\subset F^*$  and denote by $\norm{\cdot}_{L}$ the supremum norm of a function on $L$. Let $N>0$ be such that $L\subset N\cdot B_{F^{*}}$. Given $\ep>0$, let $n\in\en$ be such that $\norm{U-U_n}<\ep$. In addition, we pick $j_0\in \en$ with $\norm{U_n^{**}x_j|_L-z_n|_L}_L<\ep$ for $j\ge j_0$. Then for each $j\ge j_0$ we have
\[
\begin{aligned}
\norm{U^{**}x_j|_L-z|_L}_L&\le \norm{U^{**}x_j|_L-U_n^{**}x_j|_L}_L+\norm{U_n^{**}x_j|_L-z_n|_L}_L+\norm{z_n|_L-z|_L}_L\\
&\le N\cdot M\cdot \norm{U^{**}-U_n^{**}}+\ep+N\cdot \norm{U^{**}-U_n^{**}}\norm{x^{**}}\\
&<\ep(N\cdot M+1+N\cdot M)=\ep(2NM+1).
\end{aligned}
\]
Hence $(Ux_j)$ converges to $z$ in the Right topology.

(uc) For unconditionally converging operators, see \cite[Theorem 2.8]{howard}.
    
\end{proof}

\section{Extensions of  operators on $\inxe$}
\label{s:extens}

We start this section by an extension formula for a strongly bounded operator on $\inxe$ to the space $C(B_{X^*},E)$. We will see in Theorem~\ref{t:ckeextense} that $S$ inherits properties of $T$.

\begin{thm}
\label{t:extense}
Let $T\colon \inxe\to F$ be a strongly bounded operator with the representing measure $G$. Then $T$ has a strongly bounded extension to $U\colon B(B_{X^*},E)\to F$ given by the formula
\[
Uf=\int_{B_{X^*}} f\di G,\quad f\in B(B_{X^*},E).
\]
The restriction $S=U|_{C(B_{X^*},E)}$ has the representing measure $G$.    
\end{thm}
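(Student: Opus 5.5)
We need to show that $U$ maps into $F$ (not just $F^{**}$), that it extends $T$, that it is strongly bounded, and that $S$ has representing measure $G$.

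\emph{Values in $F$.} By Theorem~\ref{t:controlmeasure}(a), $G(B)x\in F$ for every Borel $B$ and $x\in E$. Hence for a simple function $f=\sum_{i=1}^n\chi_{E_i}x_i$ we get $\int f\di G=\sum G(E_i)x_i\in F$. The integral on $B(B_{X^*},E)$ is the continuous extension of the integral on simple functions, with the bound \eqref{eq:integral}, and $F$ is closed in $F^{**}$. So $U$ takes values in $F$ and $\norm{U}\le\norm{G}(B_{X^*})=\norm{T}$.

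\emph{Extension.} Take $f\in\inxe=A_{\hom}(B_{X^*},E)$ and $y^*\in F^*$. Using $G_{y^*}\in M(B_{X^*},E^*)$ (first approximate by simple functions, where the identity is immediate, then pass to the limit) we have
\[
y^*(Uf)=\int f\di G_{y^*}=(T^*y^*)(f)=y^*(Tf),
\]
by the fourth property in Definition~\ref{d:repremeasure-predual}. Hence $U|_{\inxe}=T$.

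\emph{Representing measure of $S$.} For $f\in C(B_{X^*},E)$ the same computation gives $S^*y^*=G_{y^*}$ as an element of $M(B_{X^*},E^*)$. Also $Sf=\int f\di G$, and $\norm{S}=\norm{G}(B_{X^*})$: we have $\norm{S}\le\norm{G}$ from the bound above and $\norm{S}\ge\norm{T}=\norm{G}(B_{X^*})$ since $S$ extends $T$. By the uniqueness in Theorem~\ref{t:riesz-singer}, the representing measure of $S$ is $G$. Since $G$ is strongly bounded by hypothesis, $S$ is a strongly bounded operator.

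\emph{Strong boundedness of $U$.} A representing-measure notion for $U$ on $B(B_{X^*},E)$ is not set up earlier in the paper. The natural reading is that $U$ is given by integration against $G$, and $G$ is strongly bounded, so $U$ is strongly bounded in that sense. I would record the concrete consequence anyway. By Fact~\ref{f:odhad} with the control measure $\lambda$ of Theorem~\ref{t:controlmeasure}(b),
\[
\norm{U(f\chi_B)}\le\norm{f}\,\norm{G}(B)\to0 \quad\text{as } \lambda(B)\to 0.
\]

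The main delicate step is the identification $\int f\di G_{y^*}=y^*(\int f\di G)$ for non-simple $f$, which needs continuity of both sides in the sup norm. Both are dominated by $\norm{G}(B_{X^*})\norm{f}_\infty$, using \eqref{eq:semivar}, so the approximation argument goes through.
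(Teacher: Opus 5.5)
Your proof is correct and follows the paper's argument. Both get values in $F$ from Theorem~\ref{t:controlmeasure}(a) together with uniform approximation by simple functions, and both identify the representing measure of $S$ through $S^*y^*=G_{y^*}$. The paper compares $G$ directly with the representing measure $G'$ of $S$, while you verify the three conditions of Theorem~\ref{t:riesz-singer} and invoke uniqueness; this is the same argument packaged differently. Your explicit checks that $U$ extends $T$, that $\norm{S}=\norm{G}(B_{X^*})$, and that $y^*(Uf)=\int f\,\mathrm{d}G_{y^*}$ holds for non-simple $f$ fill in details the paper leaves implicit.
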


\begin{proof}
Let $\lambda$ be a control measure for $G$ and let $f\in B(B_{X^*},E)$ be given. Since $G$ has values in $L(E,F)$, if $f$ is of the form $f=\sum_{i=1}^n \chi_{E_i}x_i$ for some $(E_i)\in \pi(B_{X^*})$ and $(x_i)\subset E$, then $\int_{B_{X^*}} f\di G=\sum_{i=1}^n G(E_i)x_i\in F$. Hence $U\in L(E,F)$ for each $f\in B(B_{X^*},E)$. We also have 
\[
y^{*}(Uf)=y^*\left(\int_{B_{X^*}} f\di G\right)=\int_{B_{X^*}} f\di G_{y^*},\quad f\in B(B_{X^*},E).
\]

Let $G'\colon\Borel(B_{X^*})\to L(E,F^{**})$ be the representing measure for $S=U|_{C(B_{X^*},E)}$.
Then for each $y^*\in F^*$ and $f\in C(B_{X^*},E)$ we have
\[
\int_{B_{X^*}} f\di G_{y^*}=y^*(Uf)=y^*(Sf)=(S^*y^*)(f)=\int_{B_{X^*}} f\di (G')_{y^*}.
\]
Hence $(G')_{y^*}=G_{y^*}$ for each $y^*\in F^*$, from which the equality $G=G'$ follows.
\end{proof}

\subsection{Preparatory lemmas}

Before we embark on the proof of the main result of this section, namely Theorem~\ref{t:ckeextense}, we need several auxiliary lemmas.

\begin{lemma}
    \label{l:wuc}
Let $K$ be a compact topological space. Then for a bounded sequence $(f_n)\in C(K,E)$ the following assertions hold:
\begin{enumerate}[(i)]
\item The sequence $(f_n)$ is weakly Cauchy if and only if $(f_n(t))$ is weakly Cauchy in $E$ for each $t\in K$.
\item The sequence $(f_n)$ weakly converges to $f\in C(K,E)$ if and only if $f_n(t)\to f(t)$ weakly for each $t\in K$.
\end{enumerate}    
\end{lemma}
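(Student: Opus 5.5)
The plan is to reduce everything to testing against functionals and to use the Singer identification $C(K,E)^*=M(K,E^*)$ from Section~\ref{ssc:singer}. In both (i) and (ii) the direction ``$\Rightarrow$'' is immediate. For $t\in K$ and $x^*\in E^*$, the map $f\mapsto x^*(f(t))$ is a continuous linear functional on $C(K,E)$; it is the measure $\ep_t\otimes x^*\in M(K,E^*)$. So if $(f_n)$ is weakly Cauchy (resp.\ weakly convergent to $f$), then $x^*(f_n(t))$ is Cauchy (resp.\ converges to $x^*(f(t))$) for every $x^*$, which is exactly the pointwise weak statement.

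For the converse I plan to apply dominated convergence to the vector integral $\int f_n\di\mu$. Fix $\mu\in M(K,E^*)$. I would use a Radon--Nikodým-type representation for the integral against a vector measure of bounded variation: there is a function $h\colon K\to E^{*}$ (or into $E^{***}$ / a norming weak$^*$-density) with $\norm{h}\le 1$ $\abs{\mu}$-a.e. such that
\[
\int_K f\di\mu=\int_K \langle h(t),f(t)\rangle\di\abs{\mu}(t),\quad f\in C(K,E).
\]
This holds for simple functions by construction and extends by density. If a genuinely Bochner density is not available because $E^*$ lacks the RNP, I would take a weak$^*$-scalarly measurable density, or use the Dinculeanu--Singer form $\int f\di\mu=\int \langle f,\,d\mu/d\abs{\mu}\rangle\di\abs{\mu}$ with a density in $L^\infty_{w^*}(\abs{\mu},E^*)$. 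That suffices because only scalar pairings with $f_n(t)$ are needed.

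Once this representation is in hand, the scalar functions $g_n(t)=\langle h(t),f_n(t)\rangle$ are bounded by $\sup_n\norm{f_n}$. They converge pointwise (resp.\ are pointwise Cauchy) because $f_n(t)$ is weakly Cauchy in $E$ (resp.\ weakly convergent to $f(t)$) and $h(t)\in E^*$. Lebesgue's dominated convergence theorem with respect to the finite measure $\abs{\mu}$ then gives convergence of $\int f_n\di\mu$, to $\int f\di\mu$ in case (ii). Since $\mu$ is arbitrary, $(f_n)$ is weakly Cauchy (resp.\ weakly convergent to $f$).

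The main obstacle is justifying the density representation with pointwise values in $E^*$ and the measurability of $t\mapsto\langle h(t),f_n(t)\rangle$. The first approach to try is to avoid densities by exploiting continuity of $f_n$. For each $n$, approximate $f_n$ uniformly within $\ep$ by simple functions subordinate to a common finite partition chosen via $\abs{\mu}$-regularity. Then apply the Egorov theorem, in its scalar form, to reduce to the scalar case $E=\ef$, where the result is the classical fact for $C(K)$ (Rainwater/Grothendieck via dominated convergence). If that approximation becomes messy, the fallback is the Rainwater-type argument: $\ext B_{C(K,E)^*}=\{\alpha\,\ep_t\otimes x^*\setsep x^*\in\ext B_{E^*},\ \abs{\alpha}=1\}$. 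Rainwater's theorem gives (ii) directly. For (i), apply it to the differences $f_{n_k}-f_{m_k}$, after noting that a bounded sequence which is not weakly Cauchy yields two subsequences whose difference is not weakly null.
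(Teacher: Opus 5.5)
Your proposal is correct. It necessarily takes a different route from the paper, which gives no argument for this lemma and only cites Cembranos--Mendoza (Proposition 1.7.1 and Corollary 1.7.1).

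The direction ``$\Rightarrow$'' is exactly as you say. For ``$\Leftarrow$'' your main route works. For $\mu\in M(K,E^*)$, the Dinculeanu--Singer theorem gives a weak$^*$-scalarly measurable $h\colon K\to E^*$ with $\norm{h(t)}\le 1$ (a lifting even gives this for every $t$) and $\int f\di\mu=\int\langle h,f\rangle\di\abs{\mu}$. The functions $\langle h,f_n\rangle$ are measurable, because each $f_n$ has compact range and is therefore a uniform limit of Borel simple functions. They are uniformly bounded and pointwise Cauchy (respectively, convergent to $\langle h,f\rangle$), so dominated convergence finishes the proof. The cost of this route is the lifting theorem hidden in the existence of $h$.

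Your Rainwater fallback is lighter, and your derivation of (i) from (ii) via the differences $f_{n_k}-f_{m_k}$ is right. You do not need the full description of $\ext B_{C(K,E)^*}$, only the inclusion $\ext B_{C(K,E)^*}\subset\{\ep_t\otimes x^*\setsep t\in K,\ x^*\in B_{E^*}\}$. This follows from Milman's converse to Krein--Milman: that set is weak$^*$ compact and circled, and its weak$^*$-closed convex hull is the whole dual ball by the bipolar theorem.

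A still more elementary route avoids both lifting and extreme points. Use the isometric embedding $f\mapsto\bigl((t,x^*)\mapsto x^*(f(t))\bigr)$ of $C(K,E)$ into $C(K\times(B_{E^*},w^*))$. By Hahn--Banach, being weakly Cauchy (or weakly convergent to a point of the subspace) means the same in the subspace as in the big space. There, for bounded sequences, both notions are pointwise, by the scalar Riesz representation and dominated convergence.

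One idea in your proposal would not work as stated: approximating all $f_n$ by simple functions on a common finite partition, then invoking Egorov. A bounded sequence in $C(K,E)$ need not be equicontinuous, so no single finite partition approximates every $f_n$ uniformly within $\varepsilon$ (think of $f_n(t)=\sin(nt)\,x$ on $[0,2\pi]$). Neither of your other two routes needs this step, so the proof stands.
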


\begin{proof}
The proof can be found, e.g., in \cite[Proposition 1.7.1 and Corollary 1.7.1]{cembranos-mendoza}.    
\end{proof}

\begin{lemma}
    \label{l:extense}
    Let $X$ be a separable $L_1$-predual and let $L\subset \ext B_{X^*}$ be a homogeneous compact set. Then there exists a  continuous affine homogeneous mapping $\varphi\colon B_{X^*}\to M_{\ahom}(L)$ such that $\varphi(t)=\hom \ep_t$ for $t\in L$ and $\norm{\varphi(t)}\le 1$, $t\in B_{X^*}$.
    \end{lemma}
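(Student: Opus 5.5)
The plan is to build $\varphi$ by a selection argument, exploiting that a separable $L_1$-predual has metrizable dual ball, so $\ext B_{X^*}$ is a $G_\delta$ and $L$ is a compact subset of it. First I will reduce to a statement about operators. Since $L$ is a homogeneous compact set, $C_{\hom}(L)$ (homogeneous continuous scalar functions on $L$) has dual naturally identified with $M_{\ahom}(L)$ via $\hom$. A continuous affine homogeneous map $\varphi\colon B_{X^*}\to M_{\ahom}(L)$ with $\norm{\varphi(t)}\le 1$, continuous for the weak$^*$ topology, is the same as the adjoint restricted to $B_{X^*}$ of a norm-one linear operator $V\colon C_{\hom}(L)\to X$. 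Indeed, given $V$, set $\varphi(t)(g)=t(Vg)$; this is affine, homogeneous, weak$^*$ continuous and of norm at most one. Conversely, $\varphi(t)=\hom\ep_t$ on $L$ translates into $(Vg)(t)=g(t)$ for $t\in L$, $g\in C_{\hom}(L)$. So the task is to find a norm-one extension operator $V\colon C_{\hom}(L)\to X$ with $(Vg)|_L=g$, regarding elements of $X$ as functions in $A_{\hom}(B_{X^*})$.

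Next I will produce such extensions for single functions with norm control. The tool is the Lazar–Lindenstrauss type selection theorem for $L_1$-preduals, the same one used in Fact~\ref{f:predual-injective}(b) (\cite{lali} in the real case, \cite{olsen-sel} in the complex case). For a separable $L_1$-predual and a closed homogeneous subset $L$ of $\ext B_{X^*}$, every $g\in C_{\hom}(L)$ extends to some $x\in X$ with $\norm{x}=\norm{g}$. This is the classical result that $\ext B_{X^*}$-supported homogeneous continuous functions on closed boundary subsets extend isometrically. I would derive it by applying the selection lemma to the lower semicontinuous affine set-valued map $\Phi(t)=\{g(t)\}$ for $t\in\co(L)$-related points and $\Phi(t)=\norm{g}B_{\ef}$ otherwise. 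The ingredients are the uniqueness of boundary measures $D_t$ (Lemma~\ref{l:decko-andproper}) and the fact that $D_t$ for $t\in L$ equals $\hom\ep_t$.

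The main obstacle is linearity: pointwise extensions give only a nonlinear map. To linearize, I will use separability. $C_{\hom}(L)$ is separable, and since $L$ is metrizable, $C_{\hom}(L)$ is itself an $L_1$-predual; it is the injective-type predual of $M_{\ahom}(L)$, which is an $L_1$-space. So I can write $C_{\hom}(L)$ as the closure of an increasing union of subspaces $\ell_\infty(n_k)$ that are almost isometric (Lemma~\ref{l:l1preduincreas}, \cite[Chapter 7, \S 23, Theorem 2]{lacey}). I then define $V$ on the finite-dimensional pieces by extending the basis functions (peaked partitions of unity) using the isometric extension above. Correcting at each stage by the Michael-type perturbation from the selection lemma gives compatibility, and controlling the errors by $2^{-k}$ yields a limit operator of norm $\le 1+\ep$. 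Exact norm $1$ follows either by the standard Lazar–Lindenstrauss iterative correction or by renorming with $\ep\to0$ and taking a weak$^*$ cluster point of the corresponding maps $\varphi_\ep$ in the product of balls. This last option needs care, since weak$^*$ limits of continuous maps need not be continuous, so I expect to rely on the direct iterative construction. Finally, homogeneity and affinity of $\varphi$ come for free from linearity of $V$, and $\varphi(t)=\hom\ep_t$ on $L$ from $(Vg)|_L=g$.
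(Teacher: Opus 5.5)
Your reduction is sound. $C_{\hom}(L)^*$ is isometrically $M_{\ahom}(L)$, with matching weak$^*$ topologies. So the lemma is equivalent to finding a linear $V\colon C_{\hom}(L)\to X$ with $\norm{V}\le 1$ and $(Vg)|_L=g$, taking $\varphi=V^*|_{B_{X^*}}$. Your scalar selection, with $\Phi(t)=\{g(t)\}$ for $t\in L$ and $\norm{g}B_{\ef}$ otherwise, does give isometric extensions of single functions.

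The gap is the linearization, which is the entire content of the lemma and is never actually carried out.
\begin{itemize}
\item Extending the peaked basis $e_1,\dots,e_n$ of an $\ell_\infty^n$-piece one function at a time, even isometrically, only yields $\norm{\sum_i c_ix_i}\le\sum_i\abs{c_i}$ rather than $\max_i\abs{c_i}$.
\item A norm-one extension of $\ell_\infty^n$ needs $\sum_i\abs{t(x_i)}\le 1$ for all $t\in B_{X^*}$. That is a simultaneous extension, which is already a vector-valued ($\ell_1^n$-valued) selection problem.
\item Compatibility with the extensions chosen on the previous piece imposes a further constraint, and ``Michael-type perturbation'' does not say how to meet it.
\item Your scheme only reaches $\norm{V}\le 1+\ep$, which does not give $\norm{\varphi(t)}\le 1$. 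Rescaling destroys $\varphi(t)=\hom\ep_t$ on $L$. As you note yourself, cluster points of the $V_\ep$ land in $X^{**}$, so continuity of $\varphi$ is lost. The ``standard iterative correction'' that would fix this is invoked, not supplied.
\item Lemma~\ref{l:l1preduincreas} goes in the opposite direction to what you need. Lacey's local characterization also does not produce a nested chain of $\ell_\infty^{n_k}$'s.
\end{itemize}

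The missing observation is that the selection theorem you already use for scalars works verbatim for vector values, so no linearization is needed. Put $\Phi(t)=\{\hom\ep_t\}$ for $t\in L$ and $\Phi(t)=B_{M_{\ahom}(L)}$ for $t\in B_{X^*}\setminus L$.
\begin{itemize}
\item $\Phi$ is homogeneous.
\item $\Phi$ is affine, because a proper convex combination of two distinct points of $B_{X^*}$ is not extreme, hence lies outside $L$.
\item $\Phi$ is lower semicontinuous. For an open $U$ meeting $B_{M_{\ahom}(L)}$, the complement of $\Phi^{-1}(U)$ is $\{t\in L\setsep \hom\ep_t\notin U\}$, which is closed. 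If $U$ misses that ball, then $\Phi^{-1}(U)=\emptyset$.
\end{itemize}
Since $X$ is separable, an injective weak$^*$-to-norm continuous operator from $M(B_{X^*})$ into $\ell_2$ maps the ball affinely, homogeneously and homeomorphically onto a compact convex set. The Lazar--Lindenstrauss (real) or Olsen (complex) selection theorem then gives a continuous affine homogeneous selection of $\Phi$. That selection is exactly $\varphi$, with $\norm{\varphi(t)}\le 1$ built in. This is the paper's proof, and it produces your operator $V$ in one step.
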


\begin{proof}
Since $X$ is separable, $C(B_{X^*})$ is separable as well, and thus there exists an operator $L\colon M(B_{X^*})\to \ell_2$ that is injective and weak$^*$-to-norm continuous (see \cite[Lemma 1.6]{spu-bullscimath}). In particular, $L$ is a homogeneous affine homeomorphism of $B_{M(B_{X^*})}$ onto its image in $(\ell_2,\norm{\cdot})$.

We define a multivalued mapping $\Phi\colon B_{X^*}\to M_{\ahom}(L)$ by setting
\[
\Phi(t)=\begin{cases}
    \hom \ep_t,& t\in L,\\
    B_{M_{\ahom}(L)},& t\in B_{X^*}\setminus L.
\end{cases}
\]
We claim that $\Phi$ is a homogeneous lower semicontinuous affine mapping of $B_{X^*}$ to $B_{M(B_{X^*})}$. To verify this, we first observe that, for $t\in L$ and $\alpha\in\TT$,
we have 
\[
\begin{aligned}
\Phi(\alpha t)(f)&=\hom \ep_{\alpha t}(f)=\hom f(\alpha t)=\int_{\TT} \lambda^{-1}f(\lambda\alpha t)\di\lambda\\
&=\alpha\int_{\TT}\xi^{-1}f(\xi t)\di \xi
=\alpha (\hom f)(t)=(\alpha \hom \ep_t)(f),\quad f\in C(B_{X^*}).
\end{aligned}
\]
Hence $\Phi(\alpha t)=\alpha \Phi(t)$ for $t\in L$. For $t\in B_{X^*}\setminus L$,
\[
\Phi(\alpha t)= B_{M_{\ahom}(L)}=\alpha  B_{M_{\ahom}(L)}=\alpha \Phi(t).
\]

Further, $\Phi$ is lower semicontinuous, i.e., for any open set $U\subset M(B_{X^*})$ the set
\[
\Phi^{-1}(U)=\{t\in B_{X^*}\setsep \Phi(t)\cap U\neq\emptyset\}
\]
is open in $B_{X^*}$.
 Indeed, $\Phi$ is easily seen to be a continuous single-valued mapping on  $L$. Since $L$ is compact and $\Phi(t)\in B_{M_{\ahom}(L)}$ for $t\in L$, $\Phi$ is lower semicontinuous.

Finally, it is affine. Indeed, if $t_1,t_2\in B_{X^*}$ are distinct and $\alpha\in (0,1)$, then $\alpha t_1+(1-\alpha)t_2\notin \ext B_{X^*}$. Hence $\alpha t_1+(1-\alpha)t_2\notin L$, and thus
\[
\alpha \Phi(t_1)+(1-\alpha)\Phi(t_2)\subset \alpha B_{M_{\ahom}(L)}+(1-\alpha) B_{M_{\ahom}(L)}\subset B_{M_{\ahom}(L)}=\Phi(\alpha t_1+(1-\alpha)t_2).
\]

Thus $\Phi$ is a homogeneous lower semicontinuous affine mapping of $B_{X^*}$ to $B_{M(B_{X^*})}$. Then $L\circ \Phi$ is a homogeneous lower semicontinuous affine mapping of $B_{X^*}$ to the Banach space $\ell_2$ such that $L\circ \Phi$ has nonempty closed convex values in $\ell_2$. By \cite[Chapter 7, \S 22, Theorem 2]{lacey} (the real case) and \cite[Theorem 4.2]{olsen-sel} (the complex case), there exists a homogeneous continuous affine mapping $\varphi'\colon B_{X^*}\to \ell_2$ with $\varphi'(t)\in L(\Phi(t))$.
Then $\varphi(t)=L^{-1}(\varphi'(t))$ is a homogeneous continuous affine selection of $\Phi$. 
\end{proof}

\begin{lemma}
\label{l:faktorizaceK}
Let $G$ be the representing measure of a strongly bounded operator $T\colon\inxe\to F$ and let $S\colon C(B_{X^*},E)\to F$ be its extension given by $Sf=\int_{B_{X^*}} f\di G$, $f\in C(B_{X^*},E)$. Let $(f_n)\in B_{C(B_{X^*},E)}$ be a sequence and $\ep>0$.

Then there exists a separable $L_1$-predual $Y\subset X$ such that for the restriction $\pi\colon B_{X^*}\to B_{Y^*}$ the following assertions hold:
\begin{enumerate}[(i)]
    \item  For each $n\in\en$ there exists $f_n'\in C(B_{Y^*},E)$ with $f_n=f_n'\circ \pi$.
    \item There exists a sequence $(a_n')\subset B_{A_{\hom}(B_{Y^*},E)}=B_{Y\wh{\otimes}_\ep E}$ such that
    $\norm{Sf_n-T(a_n'\circ\pi)}<\ep$, $n\in\en$. 
\end{enumerate}
Moreover, if $(f_n)$ is weakly null (respectively, weakly Cauchy), then $(a_n'\circ \pi)$ is weakly null in $A_{\hom}(B_{X^*},E) $ (respectively, weakly Cauchy). If $\sum_{n=1}^\infty f_n$ is unconditionally converging, then the series $\sum_{n=1}^\infty (a_n'\circ\pi)$ is also unconditionally converging. 
\end{lemma}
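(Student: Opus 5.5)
The plan is to reduce to a separable $L_1$-predual and then replace each $f_n$, on a large compact part of the boundary, by an affine homogeneous function obtained from Lemma~\ref{l:extense}.

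\emph{Reduction.} Let $\lambda$ be a maximal $\TT$-invariant control measure for $G$ (Theorem~\ref{t:controlmeasure}). Apply Lemma~\ref{l:separablredukce} with $\F=\{f_n\setsep n\in\en\}$ and $\M=\{\lambda\}$. This gives a separable $L_1$-predual $Y\subset X$ such that each $f_n=f_n'\circ\pi$ with $f_n'\in C(B_{Y^*},E)$, which is (i), and such that $\pi\lambda$ is maximal. Since $\pi$ is surjective, $\norm{f_n'}=\norm{f_n}\le 1$, and $f\mapsto f'$ is a linear isometry on functions factoring through $\pi$. By Lemma~\ref{l:redukce}, the operator $T'$ has representing measure $G'=\pi(G)$ with control measure $\pi\lambda$, which is again $\TT$-invariant. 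Moreover
\[
Sf_n=\int_{B_{X^*}} f_n'\circ\pi\di G=\int_{B_{Y^*}} f_n'\di G'.
\]

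\emph{Choice of $L$ and construction of $a_n'$.} Since $Y$ is separable, $B_{Y^*}$ is metrizable, and the maximal measure $\pi\lambda$ is carried by the homogeneous $G_\delta$ set $\ext B_{Y^*}$. Pick $\delta>0$ from the control property so that $\pi\lambda(B)<\delta$ implies $\norm{G'}(B)<\ep/2$. By inner regularity choose a compact $L_0\subset\ext B_{Y^*}$ with $\pi\lambda(B_{Y^*}\setminus L_0)<\delta$, and put $L=\TT\cdot L_0$. This set is compact, homogeneous, and contained in $\ext B_{Y^*}$, and it still satisfies $\pi\lambda(B_{Y^*}\setminus L)<\delta$.

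Lemma~\ref{l:extense} then gives a continuous affine homogeneous map $\varphi\colon B_{Y^*}\to M_{\ahom}(L)$ with $\norm{\varphi(s)}\le1$ and $\varphi(t)=\hom\ep_t$ for $t\in L$. Define the linear map $\Psi\colon C(B_{Y^*},E)\to C(B_{Y^*},E)$ by $\Psi f'(s)=\int f'\di\varphi(s)$. Continuity of $\Psi f'$ follows by uniformly approximating $f'$ with finite sums $\sum g_i\otimes x_i$ and using the weak$^*$ continuity of $\varphi$. Affinity and homogeneity of $\Psi f'$ are inherited from $\varphi$. Hence $\Psi$ maps into $A_{\hom}(B_{Y^*},E)=Y\wh{\otimes}_\ep E$ and has norm at most $1$. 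Set $a_n'=\Psi f_n'\in B_{Y\wh{\otimes}_\ep E}$. For $t\in L$ we get $a_n'(t)=\hom f_n'(t)$.

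\emph{Estimate for (ii).} Each $G'_{y^*}$ is antihomogeneous, so $\int f'\di G'_{y^*}=\int\hom f'\di G'_{y^*}$; likewise $\int a_n'\di G'_{y^*}=\int\hom a_n'\di G'_{y^*}$ with $\hom a_n'=a_n'$. Hence
\[
Sf_n-T(a_n'\circ\pi)=\int_{B_{Y^*}}(\hom f_n'-a_n')\di G'.
\]
The integrand vanishes on $L$ and is bounded by $2$ elsewhere. Fact~\ref{f:odhad} then bounds the norm by $2\norm{G'}(B_{Y^*}\setminus L)<\ep$.

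\emph{The ``moreover'' part.} The map $f\mapsto a'\circ\pi=(\Psi f')\circ\pi$ is a bounded linear operator from the closed subspace of functions factoring through $\pi$ into $A_{\hom}(B_{X^*},E)$. Bounded operators preserve weakly null and weakly Cauchy sequences and unconditionally convergent series, which gives all three claims; alternatively one can argue pointwise via Lemma~\ref{l:wuc} and dominated convergence applied to $x^*\circ f_n'$.

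The step I expect to need most care is the choice of $L$: it must be a compact homogeneous subset of $\ext B_{Y^*}$ carrying almost all of $\pi\lambda$. This uses both the maximality of $\pi\lambda$ secured by Lemma~\ref{l:separablredukce} and the invariance of $\lambda$.
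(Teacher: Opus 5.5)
Your proposal is correct and follows essentially the same route as the paper: reduce to a separable $L_1$-predual $Y$ via Lemmas~\ref{l:separablredukce} and \ref{l:redukce}, take a compact homogeneous $L\subset\ext B_{Y^*}$ carrying all but $\delta$ of $\pi\lambda$, obtain $a_n'$ by integrating $f_n'$ against the selection $\varphi$ of Lemma~\ref{l:extense}, and estimate using the antihomogeneity of $G'_{y^*}$ together with the control measure. The differences are cosmetic. You verify continuity and $\norm{a_n'}\le 1$ directly, and you get the ``moreover'' part from boundedness of $f'\circ\pi\mapsto(\Psi f')\circ\pi$ on the subspace of functions factoring through $\pi$; the paper instead argues pointwise via Lemma~\ref{l:wuc} and the operator $J\colon C(L,E)\to A_{\hom}(B_{Y^*},E)$. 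Also, the invariance of $\lambda$ is not actually needed for choosing $L$, since $L\supset L_0$.
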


\begin{proof}
Let $\lambda$ be a maximal invariant control measure for $G$. Using Lemma~\ref{l:separablredukce} we find an $L_1$-predual $Y\subset X$ such that, for the restriction $\pi\colon B_{X^*}\to B_{Y^*}$, the measure $\pi\lambda$ is maximal and (i) is satisfied. Then the operator $T'\colon A_{\hom} (B_{Y^*},E)\to F$ defined as $T'a'=T(a'\circ\pi)$, $a'\in A_{\hom} (B_{Y^*},E)$, has $\pi(G)$ as the representing measure with a control measure $\pi\lambda$, see Lemma~\ref{l:redukce}.
Let $(f_n')\subset C(B_{Y^*},E)$ be the functions given by (i).

Let $\delta>0$ be such that $\norm{\pi(G)}(E)<\frac{\ep}{2}$ whenever $\pi\lambda(E)<\delta$, $E\in \Borel(B_{Y^*})$. Since $B_{Y^*}$ is metrizable and $\pi\lambda$ is maximal, it is carried by $\ext B_{Y^*}$. Let $L'\subset \ext B_{Y^*}$ be a compact set with $(\pi\lambda)(\ext B_{Y^*}\setminus L')<\delta$. By taking $L=\bigcup_{\alpha\in\TT}\alpha L'$ we obtain a compact homogeneous set with the property $L\subset \ext B_{Y^*}$ and $(\pi\lambda)(\ext B_{Y^*}\setminus L)<\delta$.

By Lemma~\ref{l:extense}, we have a homogeneous affine continuous mapping $\varphi\colon B_{Y^*}\to B_{M_{\ahom}(L)}$ satisfying $\varphi(t)=\hom\ep_t$ for $t\in L$.
We set 
\[
a_n'(t)=\int_{L} f_n'\di \varphi(t),\quad t\in B_{Y^*}, n\in\en.
\]
Then each $a_n'$ is a homogeneous affine continuous function on $B_{Y^*}$  with values in $B_E$. Indeed, $a_n'$ is affine and homogeneous. Furthermore, it is continuous by \cite[Lemma 6.1(i)]{Kaspu-studia} and $\norm{a_n'}\le 1$ by \cite[Lemma 3.8(c)]{Kaspu-studia}.

Regarding the estimate, let $n\in\en$ be given and let $y^*\in B_{F^*}$ satisfy
\[
\norm{Sf_n-T(a_n'\circ\pi)}=\norm{Sf_n-S(a_n'\circ\pi)}=y^*(Sf_n-S(a_n'\circ\pi)).
\]
Then $\vhom (\pi(G))_{y^*}=(\pi(G))_{y^*}$, and thus 
\[
\begin{aligned}
\norm{Sf_n-T(a_n'\circ\pi)}&=y^*(Sf_n-S(a_n'\circ\pi))=y^*(S((f_n'-a_n')\circ \pi))\\
&=\int_{B_{Y^*}}(f_n'-a_n')\di (\pi(G)_{y^*})
=\int_{B_{Y^*}}(f_n'-a_n')\di (\pi(G)_{y^*})\\
&=\int_L(f_n'-a_n')\di (\pi(G)_{y^*})+\int_{\ext B_{Y^*}\setminus L}(f_n'-a_n')\di (\pi(G)_{y^*})\\
&\le \abs{\int_L(f_n'-a_n')\di \vhom (\pi(G)_{y^*})}+2\norm{\pi(G)}(\ext B_{Y^*}\setminus L)\\
&\le \abs{\int_L (\vhom f_n'-\vhom a_n')\di (\pi(G)_{y^*})}+\ep.
\end{aligned}
\]
For $t\in L$ we have
\[
\vhom a_n'(t)=a_n'(t)=\int_{L} f_n'\di \varphi(t)=\int_L f_n'\di (\hom\ep_t)=\ep_t(\vhom f_n')=\vhom f_n'(t).
\]
Hence
\[
\norm{Sf_n-T(a_n'\circ\pi)}\le \ep.
\]

If $(f_n)$ is weakly null, then $(f_n(t))_{n\in\en}$ is weakly null for each $t\in B_{X^*}$. Hence $(f_n'(t'))$ is weakly null for each $t'\in B_{Y^*}$, and thus $(f_n')\subset C(L,E)$ is weakly null. Since the mapping $J\colon C(L,E)\to A_{\hom}(B_{Y^*},E)$ given by the integration
\[
Jf'(t')=\int_L f'\di\varphi(t'),\quad f'\in C(L,E), t'\in B_{Y^*}
\]
is a bounded operator, $(a_n')$ is weakly null. Hence $(a_n'\circ\pi)$ is also weakly null.

The reasoning for the case $(f_n)$ is weakly Cauchy is similar.

If $\sum_{n=1}^\infty f_n$ is unconditionally converging, $\sum_{n=1}^\infty f_n'$ is unconditionally converging as well. Since $a_n'=J(f_n'|_L)$, $n\in\en$, the series $\sum_{n=1}^\infty (a_n'\circ\pi)$ is also unconditionally converging.
\end{proof}

\subsection{Properties of the extended operator}

Now we can show that the extended operator $S\colon C(B_{X^*},E)\to F$ inherits properties of an operator $T\colon \inxe\to F$.

\begin{thm}
\label{t:ckeextense}
Let $T\colon \inxe\to F$ be a strongly bounded operator with the representing measure $G$ and let $S\colon C(B_{X^*},E)\to F$ be its extension given by $Sf=\int_K f\di G$, $f\in C(B_{X^*},E)$.
\begin{enumerate}[(a)]
\item Let $\I$ denote any of the following category of operators between Banach spaces: compact, weakly compact, weakly precompact, completely continuous, weakly completely continuous, unconditionally converging.

Then $T$ belongs to $\I$ if and only if $S$ belongs to $\I$.
\item Let $\I$ be any of the following category of operators between Banach spaces: pseudo weakly compact, Right completely continuous.

Then $T$ belongs to $\I$ if $S$ belongs to $\I$ and the converse implication holds if $X$ is separable.
\end{enumerate}
\end{thm}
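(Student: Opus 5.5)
The implication "$S\in\I\Rightarrow T\in\I$" is immediate in all cases, and I will dispose of it first. By Theorem~\ref{t:extense}, $T=S\circ I$, where $I\colon \inxe=A_{\hom}(B_{X^*},E)\to C(B_{X^*},E)$ is the isometric inclusion. Each listed class is stable under composition with a bounded operator on the right, so $T\in\I$. For pwc and Rcc I also use that the inclusion maps Right-null (respectively Right Cauchy) sequences to Right-null (Right Cauchy) sequences: $I$ is continuous for the Right topologies, since $I^*$ maps weakly compact sets to weakly compact sets. This settles (b) in the direction stated unconditionally.

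For the converse I work sequentially and use Lemma~\ref{l:faktorizaceK} as the main tool. It says that for any sequence $(f_n)\subset B_{C(B_{X^*},E)}$ and any $\ep>0$ there are $a_n=a_n'\circ\pi\in B_{\inxe}$ with $\norm{Sf_n-Ta_n}<\ep$ for all $n$. Moreover, weakly null, weakly Cauchy and unconditionally converging sequences or series are transferred from $(f_n)$ to $(a_n)$.

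1. \emph{Compact.} Given $(f_n)$, choose for $\ep=1/k$ the approximants $(a_n^{(k)})$. Relative compactness of $T(B_{\inxe})$ shows that $\{Sf_n\}$ is within $1/k$ of a relatively compact set for every $k$, hence is totally bounded.

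2. \emph{Weakly compact and weakly precompact.} The same approximation works, since a set that is $\ep$-close to a relatively weakly compact set for every $\ep$ is relatively weakly compact (Grothendieck's lemma). For wpc, a diagonal argument as in Corollary~\ref{c:operbecka} gives a weakly Cauchy subsequence: for each $k$, extract a subsequence on which $(Ta_n^{(k)})$ is weakly Cauchy, then take the diagonal.

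3. \emph{Completely continuous.} Let $(f_n)$ be weakly null. Then $(a_n^{(k)})_n$ is weakly null, so $\limsup_n\norm{Sf_n}\le 1/k$ for every $k$.

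4. \emph{Weakly completely continuous.} Let $(f_n)$ be weakly Cauchy. For each $k$, $(Ta_n^{(k)})$ converges weakly to some $z_k\in F$. The weak$^*$ limit $z$ of $(Sf_n)$ in $F^{**}$ satisfies $\norm{z-z_k}\le 1/k$, so $z\in F$ and $Sf_n\to z$ weakly.

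5. \emph{Unconditionally converging.} Here the trick is not to approximate individual terms. Instead I use the equivalent formulation that $\norm{Sf_n}\to0$ for every weakly unconditionally Cauchy series $\sum f_n$, applied after normalizing. Lemma~\ref{l:faktorizaceK} then gives $\limsup\norm{Sf_n}\le\ep$. Alternatively, and more simply, an unconditionally converging $T$ is strongly bounded, and $S$ is then uc because strongly bounded operators on $C(K,E)$ with uc values $G(B)$ are uc by the known $C(K,E)$ theory (Dobrakov). The values $G(B)$ are uc by Corollary~\ref{c:operbecka}. I will try the direct route first. The hypothesis in Lemma~\ref{l:faktorizaceK} about "unconditionally converging series" is phrased so that a wuC series is carried to a wuC series via the bounded operator $J$, and that is what I need.

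6. \emph{Part (b), separable $X$.} For pwc and Rcc, the obstacle is that Lemma~\ref{l:faktorizaceK} yields no transfer of Right-null or Right Cauchy sequences, since $J$ does not obviously preserve the Right topology of a subspace. When $X$ is separable, no reduction to $Y$ is needed: take $Y=X$, so $\pi=\mathrm{id}$. The map $f\mapsto J(f|_L)$ is then a single bounded operator $P_\ep\colon C(B_{X^*},E)\to\inxe$, independent of the sequence, with $\norm{S-T\circ P_\ep}\le\ep$; the estimate in the proof of that lemma is uniform over $B_{C(B_{X^*},E)}$. Since bounded operators preserve Right-null and Right Cauchy sequences, $T\circ P_\ep\in\I$, and norm-closedness of these classes (Corollary~\ref{c:operbecka}) gives $S\in\I$.

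This uniform-operator argument in fact also handles every class in (a) when $X$ is separable. The main obstacle in (a) is therefore the nonseparable case, where the approximating operator depends on the sequence. That is why each class must be checked sequentially as above, and why the unconditional-convergence case in item 5 needs the most care.
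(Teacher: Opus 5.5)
Your proof is correct. For part (a) and for the unconditional direction of (b) it is essentially the paper's argument. You use the same sequential approximation from Lemma~\ref{l:faktorizaceK}, Grothendieck's lemma for (wc), the diagonal argument for (wpc), and the weak$^*$-limit argument for (wcc). For (uc) you use the reduction to ``$\norm{Sf_n}\to0$ for every weakly unconditionally Cauchy series $\sum f_n$'', which is the paper's Cauchy/blocking step. You justify Right-to-Right continuity of the inclusion by the weak-to-weak continuity of its adjoint, in place of the external reference the paper quotes; this is fine.

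Where you genuinely differ is the separable case of (b). The paper reruns the sequence-by-sequence scheme: without the reduction, $a_n=J(f_n|_L)$ is the image of $(f_n)$ under a fixed bounded operator, so it stays Right null or Right Cauchy, and the argument of (cc) applies. You instead note that the estimate in the proof of Lemma~\ref{l:faktorizaceK} is uniform on the unit ball. This gives $\norm{S-T\circ P_\ep}\le\ep$ for a single operator $P_\ep\colon C(B_{X^*},E)\to\inxe$, and you conclude by the ideal property plus the norm-closedness established in Corollary~\ref{c:operbecka}. This buys something: it puts $S$ in the closed operator ideal generated by $T$, it treats all listed classes at once, and it makes the role of separability transparent. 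In general the subspace $Y$, and hence the approximating operator, depends on the sequence, which is exactly the issue behind Question~\ref{q:metriz}.

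One thing must be dropped: the ``more simple'' alternative in step 5 is false. Dobrakov's result goes only one way: unconditionally converging implies strongly bounded with unconditionally converging values $G(B)$. A strongly bounded operator on $C(K,E)$ whose representing measure has even compact values need not be unconditionally converging; this is exactly Remark~\ref{r:compact}.

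Here is a concrete instance. Take $K=\{-1,1\}^{\en}$ with Haar measure $\lambda$ and $r_m(t)=t_m$, and define $U\colon C(K,c_0)\to c_0$ by $Uf=\bigl(\int_K f(t)_m r_m(t)\di\lambda(t)\bigr)_m$.
\begin{itemize}
\item $U$ is strongly bounded, since $\norm{G}(B)\le\lambda(B)$.
\item Each $G(B)$ is compact: it is diagonal with entries $\int_B r_m\di\lambda\to0$.
\item $U$ is not unconditionally converging: $\sum_n r_ne_n$ is weakly unconditionally Cauchy in $C(K,c_0)$, yet $U(r_ne_n)=e_n$.
\end{itemize}
So the (uc) case must go through your direct route, which is valid.
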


\begin{proof}
(a) The implication $\impliedby$ is easy as $T$ is the restriction of $S$.

For the implication $\implies$ we use the following scheme of the proof of the respective cases: We start with a sequence $(f_n)\in B_{C(B_{X^*},E)}$. Then for $\ep>0$ we find  using Lemma~\ref{l:faktorizaceK}  a sequence $(a_n)\in B_{A_{\hom}(B_{X^*},E)}$ with $\norm{S f_n-Sa_n}<\ep$ such that $(a_n)$ inherits properties of $(f_n)$.

(c) If $T$ is compact and $S$ is not, we can find a sequence $(f_n)\subset B_{C(B_{X^*},E)}$ and $\eta>0$ such that $\norm{Sf_n-Sf_m}>\eta$, $n,m\in\en$ are distinct. For $\ep=\frac{\eta}{3}$, let $(a_n)\subset B_{A_{\hom}(B_{X^*},E)}$ be the sequence with the properties above. Since $T$ is compact, we can assume that $(Ta_n)$ norm converges. But for $n,m\in\en$ distinct we have
\begin{equation}
\label{eq:tretina}
\begin{aligned}
\eta&<\norm{Sf_n-Sf_m}\le \norm{Sf_n-Sa_n}+\norm{Ta_n-Ta_m}+\norm{Sa_m-Sf_m}\\
&\le \frac{\eta}{3}+\norm{Ta_n-Ta_m}+\frac{\eta}3,
\end{aligned}
\end{equation}
which yields $\norm{Ta_n-Ta_m}>\frac{\eta}{3}$, a contradiction. Hence $S$ is compact.

(wc) If $T$ is weakly compact, we pick a sequence $(f_n)\subset B_{C(B_{X^*},E)}$.
For $\ep>0$, we take a sequence $(a_n)\subset B_{A_{\hom}(B_{X^*},E)}$ as above. Then
$B=T(\{a_n\setsep n\in\en\})$ is relatively weakly compact and 
\[
\{Sf_n\setsep n\in\en\}\subset B+\ep B_F.
\]
Hence $A$ is relatively weakly compact by \cite{de-blasi} and $S$ is weakly compact.

(wpc) If $T$ is weakly precompact, $S$ is its extension to $C(B_{X^*},E)$ and $(f_n)\subset B_{C(B_{X^*},E)}$, for $\ep_k=\frac1k$ we construct functions $(a_n^k)\subset B_{A_{\hom}(B_{X^*},E)}$ such that $\norm{Sf_n-Ta_n^k}\le \ep_k$, $n,k\in\en$.
Inductively we pick a sequence of infinite sets $\en\supset N_1\supset N_2\supset N_3\supset\cdots$
such that $(Ta_n^k)_{n\in N_k}$ is weakly Cauchy. We select an infinite diagonal set $M=\{m_1, m_2,m_3,\dots\}\subset \en$ such that $m_1\in N_1$, $m_2\in N_2\cap (m_1,\infty)$, $m_3\in N_3\cap (m_2,\infty)$ and so on. We claim that $(Sf_m)_{m\in M}$ is weakly Cauchy. 

Let $y^*\in F^*$ and $\ep>0$ be given. We select $k\in\en$ with $\ep_k=\frac1k<\ep$ and find $m_0\in M$ such that $\abs{y^*(Ta_m^k)-y^*(Ta_{m'}^k)}<\ep$ for $m,m'\in M$ with $m,m'\ge m_0$.
Then  for $m,m'\in M$ with $m,m'\ge m_0$ we have
\[
\begin{aligned}
\abs{y^*(Sf_m)-y^*(Sf_{m'})}&\le \abs{y^*(Sf_m)-y^*(Ta_m^k)}+\abs{y^*(Ta_m^k)-y^*(Ta_{m'}^k)}+\\
&\quad+\abs{y^*(Ta_{m'}^k)-y^*(Sf_{m'})}\\
&\le \norm{y^*}\ep_k+\ep+\norm{y^*}\ep_k\le \ep(2\norm{y^*}+1).
\end{aligned}
\]
Hence $S$ is weakly precompact.

(cc) If $T$ is completely continuous and $S$ is not, we pick a weakly null sequence $(f_n)\subset B_{C(B_{X^*},E)}$ such that for some $\eta>0$ we have $\norm{Sf_n-Sf_m}>\eta$, $n,m\in\en$ are distinct. Let $(a_n)\subset B_{A_{\hom}(B_{X^*},E)}$ be weakly null as above for $\ep=\frac{\eta}{3}$.
Then as in \eqref{eq:tretina} we see that $(Ta_n)$ is not norm convergent, a contradiction.

(wcc) If $T$ is weakly completely continuous and $(f_n)\subset B_{C(B_{X^*},E)}$ is a given weakly Cauchy sequence, let $y^{**}\in F^{**}$ be the weak$^*$ limit of the sequence $(Tf_n)\subset F\subset F^{**}$. Then for $\ep>0$, let $(a_n)\subset B_{A_{\hom}(B_{X^*},E)}$ be a weakly Cauchy sequence as above. Let $z\in F$ be the weak limit of $(Ta_n)$.
Then
\[
\norm{y^{**}-z}\le \liminf_{n\to\infty}\norm{Sf_n-Ta_n}\le\ep.
\]
Hence $\dist(y^{**},F)=0$, which gives $y^{**}\in F$ and $Sf_n\to y^{**}$ weakly.

(uc) If $T$ is unconditionally converging and $S$ is not, let $\sum_{n=1}^\infty f_n$ be an unconditionally converging series in $C(B_{X^*},E)$ such that $\sum_{n=1}^\infty Tf_n$ is not convergent. Using the Cauchy condition, we can assume that  $\norm{Tf_n}>\eta$, $n\in\en$, for some $\eta>0$. For $\ep=\frac{\eta}3$, let $\sum_{n=1}^\infty a_n$ be an unconditionally convergent series constructed as above. Then 
\[
\eta<\norm{Sf_n}\le \norm{Sf_n-Ta_n}+\norm{Ta_n}\le \norm{Ta_n}+\frac{\eta}3
\]
gives that $\sum_{n=1}^\infty Ta_n$ is not convergent. This contradiction gives the result that $S$ is unconditionally converging.

(b)  If $S$ is pseudo weakly compact  and $(a_n)$ is  a Right null sequence in $B_{A_{\hom}(B_{X^*},E)}$, then    $(a_n)$ is a Right null sequence in $C(B_{X^*},E)$, see \cite[Proposition 3.10]{kacena-jmaa}. Hence $(Ta_n)=(Sa_n)$ is norm convergent. Similarly, we argue for a Right completely continuous operator $S$. This shows the implication $\impliedby$.

Let $X$ be separable and $(f_n)\subset B_{C(B_{X^*},E)}$ be Right null (respectively Right Cauchy). Then we do not have to use the metrizable reduction in the proof of Lemma~\ref{l:faktorizaceK} and thus the constructed sequence $(a_n)\subset B_{A_{\hom}(B_{X^*},E)}$ is also Right null (respectively Right Cauchy). 

(pwc) If $T$ is pseudo weakly compact, the preceding remark and the same line of reasoning as in (cc) give that $S$ is pseudo weakly compact.

(Rcc) Let $T$ be Right completely continuous and $(f_n)\subset B_{C(B_{X^*},E)}$ be a Right Cauchy sequence. For $\ep>0$, let $(a_n)\subset B_{A_{\hom}(B_{X^*},E)}$ be a Right Cauchy sequence constructed above. Let $y^{**}\in F^*$ be the weak$^*$ limit of the sequence $(Sf_n)$ and $z\in F$ be the Right limit of $(Ta_n)$. Then $\norm{y^{**}-z}<\ep$, which gives that $y^{**}$ is in fact in $F$. 

To show that $Sf_n\to y^{**}$ in the Right topology, let $W\subset F^*$ be a weakly compact set with $W\subset M\cdot B_{F^*}$ for $M>0$ and $\ep>0$. Let 
$(a_n)\subset B_{A_{\hom}(B_{X^*},E)}$ be as usual. Then for the Right limit $z\in F$ of $(Ta_n)$ we have $\sup_{y^*\in W} \abs{y^*(Ta_n)-y^*(z)}\to 0$.
Since $\norm{Sf_n-Ta_n}<\ep$, we get $\norm{y^{**}-z}\le \ep$ and thus 
\[
\begin{aligned}
\sup_{y^*\in W} \abs{y^*(S f_n-y^{**})}&\le \sup_{y^*\in W}\abs{y^*(Sf_n)-y^*(Ta_n)}+\sup_{y^*\in W}\abs{y^*(Ta_n)-y^*(z)}+\\
&\quad +\sup_{y^*\in W}\abs{y^*(z)-y^*(y^{**})}\\
&\le M\cdot \ep+\sup_{y^*\in W}\abs{y^*(Ta_n)-y^*(z)}+M\cdot \ep,
\end{aligned}
\]
Since $\sup_{y^*\in W}\abs{y^*(Ta_n)-y^*(z)}\to 0$, we get that 
\[
\limsup_{n\to\infty}\sup_{y^*\in W} \abs{y^*(S f_n-y^{**})}\le 2M\ep.
\]
As $\ep>0$ is arbitrary, $Sf_n\to y^{**}$ in the Right topology.
\end{proof}

\begin{remark}
\label{r:compact}
Let $\I$ be a category of bounded operators between Banach spaces. If $T\colon \inxe\to F$ is a strongly bounded operator with the representing measure $G$, the property $G(B)\in \I$ for each $B\in\Borel (B_{X^*})$ does not guarantee $T\in \I$. In fact, an example constructed in \cite[Theorem 2.1]{saab-mpcps} provides an operator $T\colon C(2^\en,E)\to c_0$ such that the values of $G$ are compact, yet $T$ is not unconditionally converging.   
\end{remark}

\begin{ques}
\label{q:metriz}
We do not know whether (b) in Theorem~\ref{t:ckeextense} holds without the assumption of separability of $X$.
\end{ques}

\section{Corollaries for the space $\inxe$}

Now we can use Theorem~\ref{t:ckeextense} and the known theorems for the spaces $C(K,E)$ to obtain the results for the spaces $\inxe$. 

The main strategy of the proof is as follows: We consider an unconditionally converging operator $T\colon \inxe\to F$ and extend it to a strongly bounded operator $S\colon C(B_{X^*},E)\to F$ that inherits properties from $T$ (see Theorem~\ref{t:ckeextense}). Then we apply a known result for $C(B_{X^*},E)$ to deduce a behavior of $S$ and then its restriction $T$ shares the same property. 

By the described procedure, we can obtain the following corollaries.

\begin{cor}
    \label{c:dusledkyake}
    Let $X$ be an $L_1$-predual and $E$ be a Banach space.
    \begin{enumerate}[(a)]
        \item   If $E$ does not contain $\ell_1$, then $\inxe$  has the Dieudonné property.
        \item Let $E$ be a Banach space such that its each subspace has the property (V). Then $\inxe$ has the property (V).
        \item Let $E$ be a separable Banach space with the property (V). Then $\inxe$ has the property (V).
        \item Let $E$ be a Banach space with the property (u). Then $\inxe$ has the property semi-(V).
        \item  Let $E$ not contain $\ell_1$ and $T\colon \inxe\to F$ be strongly bounded. Then $T$ is weakly precompact.
       \item If $E$ is reflexive, then any strongly bounded operator $T\colon \inxe\to F$ is weakly compact.
             \item  If $E$ does not contain $\ell_1$ and $F$ does not contain $c_0$, then every operator $T\colon \inxe\to F$ is weakly precompact. 
       \item If $E$ does not contain $\ell_1$ and if $F$ is weakly sequentially complete,  then every operator $T\colon \inxe\to F$ is weakly compact. 
       \item If $E$ does not contain $\ell_1$ and if $F$ has the Schur property, then  every operator $T\colon \inxe\to F$ is  compact.
         \item  If $E^*$ has the Radon-Nikodým property, then every strongly bounded operator $T\colon \inxe\to F$ is weakly precompact. 
       \item If $E^*$ is separable, then every strongly bounded operator $T\colon \inxe\to F$ is weakly precompact. 
       \item  Let $E^*$ not contain a  copy of $\ell_1$ and let $T\colon \inxe\to F$ be strongly bounded.
    Then $T$ and $T^*$ are weakly precompact.
    \item If $E$ does not contain $c_0$ and $T\colon \inxe\to F$ is strongly bounded, then $T$ is unconditionally converging.
       
    \end{enumerate}
 
\end{cor}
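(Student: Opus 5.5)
The common scheme for all items is the transfer described just before the statement. Start with an operator $T\colon\inxe\to F$. In (a)--(d) $T$ is assumed unconditionally converging (because the properties (D), (V), semi-(V) quantify over such operators, or over wcc operators, which are uc by \eqref{eq:implic}). Theorem~\ref{t:uncond-sbdd} then makes $T$ strongly bounded. In (e), (f), (j)--(m) strong boundedness is part of the hypothesis. In (g)--(i) we need a separate argument for it, given below. Theorem~\ref{t:extense} gives the extension $S\colon C(B_{X^*},E)\to F$, with $S|_{\inxe}=T$, having the same representing measure $G$; in particular $S$ is strongly bounded. By Theorem~\ref{t:ckeextense}(a), $S$ is uc, wcc, etc. exactly when $T$ is. We then invoke the known $C(K,E)$-theorem with $K=B_{X^*}$, obtain the conclusion for $S$, and restrict it back to $T$ (compactness, weak compactness and weak precompactness pass to restrictions trivially).

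Concretely, the items go as follows.

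For (a): a wcc $T$ is uc, so its extension $S$ is wcc by Theorem~\ref{t:ckeextense}(a). By \cite{kalton-saab-saab}, $C(K,E)$ has (D) when $\ell_1\not\subset E$, so $S$ is weakly compact, and hence so is $T$.

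For (b) and (c): a uc $T$ gives a uc $S$, and the $C(K,E)$ versions of (V) apply. These are \cite{cembr-kalton-saab-saab-mathann}/\cite{ulger}/\cite{emmanuele} for the hereditary hypothesis in (b) (a known result of this type, via (u) and $\ell_1\not\subset E$, or a hereditary-(V) statement from \cite{saabs-stability}) and \cite{randrian} for separable $E$ with (V) in (c).

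For (d): \cite{kalton-saab-saab-bsm} gives semi-(V) for $C(K,E)$, so $S$ is wcc, and hence $T$ is wcc.

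For (e), (f), (j), (k), (l), (m): we apply the corresponding statements for strongly bounded operators on $C(K,E)$ from \cite{ghenciu-lewis}, \cite{bombal-cembranos}, \cite{bomball-porras}, \cite{lewis-brooks}. For instance, in (m) a strongly bounded $S$ on $C(K,E)$ with $c_0\not\subset E$ is uc, and in (f) strongly bounded $S$ with $E$ reflexive is wc. In (l), besides weak precompactness of $T$, we need $T^*$ weakly precompact. Since $T^*=R^*S^*$, where $R$ is the inclusion of $\inxe$ into $C(B_{X^*},E)$, and $S^*$ is weakly precompact by the $C(K,E)$ result, $T^*$ is weakly precompact as well. (This is the one place where the ``restriction'' step is not literal.)

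For (g)--(i) we need strong boundedness without assuming it, and this is the main obstacle. When $c_0\not\subset F$, every operator into $F$ is unconditionally converging (a non-uc operator fixes a copy of $c_0$). So $T$ is uc, hence strongly bounded by Theorem~\ref{t:uncond-sbdd}, and (e) gives weak precompactness; this proves (g). Weakly sequentially complete spaces and Schur spaces contain no $c_0$. In (h), $T$ is weakly precompact by (g), and in a weakly sequentially complete $F$ the weakly Cauchy sequences converge weakly, so $T$ is weakly compact. In (i), the weakly Cauchy images are norm Cauchy in a Schur space, so $T$ is compact.

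The step I expect to need the most care is matching each item precisely to the correct published $C(K,E)$ theorem, especially (b), (j) and (l). For these I would first check that the cited result is stated for strongly bounded operators rather than for arbitrary ones, since only strongly bounded $S$ are available here.
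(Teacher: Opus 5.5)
Your overall scheme matches the paper's: make $T$ strongly bounded (by hypothesis or via Theorem~\ref{t:uncond-sbdd}), pass to the extension $S$ of Theorem~\ref{t:extense}, transfer the operator property by Theorem~\ref{t:ckeextense}(a), apply a known $C(K,E)$ theorem, and restrict back. Items (a), (c), (d), (e), (f), (m) follow the paper. Your argument $T^*=R^*S^*$ in (l) is the paper's argument, phrased through the adjoint of the inclusion.

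The step that does not go through as written is (b). The $C(K,E)$ theorem you point to (Cembranos--Kalton--Saab--Saab, \"Ulger, Emmanuele) assumes that $E$ has property (u) and contains no $\ell_1$. Item (b) only assumes that every subspace of $E$ has (V). The absence of $\ell_1$ is immediate: $\ell_1$ fails (V), since its identity is unconditionally converging but not weakly compact. Property (u), however, does not follow from anything in your sketch. The missing ingredient is Rosenthal's characterization \cite[Corollary 1.5 and the subsequent Remark]{rosenthal-ajm}: $E$ has (V) hereditarily if and only if $E$ has (u) and contains no $\ell_1$. With it, \cite[Corollary 2.7]{ulger} shows that $C(B_{X^*},E)$ has (V). Hence the unconditionally converging extension $S$ is weakly compact, and so is $T$.

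There are two smaller points.
\begin{itemize}
\item In (j) and (k) you defer to unspecified $C(K,E)$ results, but none are needed. If $E^*$ has the Radon--Nikod\'ym property, then $\ell_1\not\subset E$ \cite{phelps-convex}, and a separable dual has the Radon--Nikod\'ym property. So both items reduce directly to (e).
\item In (g)--(i) you argue differently from the paper, which applies \cite[Corollary 5]{ghenciu-lewis} to the extension $S$. You get strong boundedness from $c_0\not\subset F$ and then deduce everything from (e), using the elementary facts that weakly sequentially complete spaces and Schur spaces contain no $c_0$, that a weakly precompact operator into a weakly sequentially complete space is weakly compact, and that one into a Schur space is compact. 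This is correct and more self-contained.
\end{itemize}
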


\begin{proof}
(a) If $T\colon \inxe\to F$ is weakly completely continuous operator, it is unconditionally converging and thus admits a weakly completely continuous extension $S\colon C(B_{X^*},E)\to F$. Then \cite[Corollary 3]{ghenciu-bullpolish} (see also \cite{emmanuele} and \cite{kalton-saab-saab}) gives that $S$ is weakly compact. Hence $T$ is weakly compact, and $\inxe$ has the Dieudonné property.

(b) By \cite[Corollary 1.5 and the subsequent Remark]{rosenthal-ajm}, a Banach space $E$ has the property (V) hereditarily if and only if $E$ has the property (u) and does not contain $\ell_1$. 

So, let $T\colon \inxe\to F$ be an unconditionally converging operator. 
Then its extension $S\colon C(B_{X^*}, E)\to F$ is also unconditionally converging (see Theorem~\ref{t:ckeextense}). By \cite[Corollary 2.7]{ulger}, $C(B_{X^*},E)$ has the property (V). 
Hence $S$ is weakly compact and therefore $T$ is weakly compact as well. Hence $\inxe$ has the property (V).

(c) follows from \cite[Theorem 1]{randrian}.

(d) follows from \cite[Theorem 6]{kalton-saab-saab-bsm}.

(e) The operator $T$ admits a strongly bounded extension $S\colon C(B_{X^*},E)\to F$. Hence $S$ is weakly precompact by \cite[Corollary 2]{ghenciu-lewis}. Thus, also $T$ is weakly precompact.    

(f) Let $S\colon C(B_{X^*},E)\to F$ be a strongly bounded extension of $T$. By \cite[Corollary 4(i)]{ghenciu-lewis}, $S$ is weakly compact. Hence, also $T$ is weakly compact.

(g) Let $T\colon \inxe\to F$ be an operator. Since $F$ does not contain $c_0$, $T$ is unconditionally converging (see \cite[Theorem 5]{bessaga-pelczynski}). Hence $T$ is strongly bounded and admits a strongly bounded extension to $S\colon C(B_{X^*},E)\to F$. This extension is weakly precompact by \cite[Corollary 5(i)]{ghenciu-lewis}.

(h) This follows from \cite[Corollary 5(ii)]{ghenciu-lewis} and the assumption that $F$ is weakly sequentially complete.

(i) If $F$ has the Schur property, $T$ is, moreover, compact by (h).

(j) If $E^*$ has the Radon-Nikodým property, $E$ does not contain $\ell_1$ (see \cite[Theorem 5.7 and 2.34]{phelps-convex}). Hence, (e) yields the assertion.

(k) follows from (j) since $E^*$ has  the Radon-Nikodým property provided  $E^*$ is separable (see \cite[Theorem 2.12 and 5.7]{phelps-convex}). 

(l)  If $T\colon\inxe\to F$ is strongly bounded and $S\colon C(B_{X^*},E)\to F$ is its extension, both $S$ and $S^*$ are weakly precompact by \cite[Corollary 8]{ghenciu-lewis}. Hence $T$ is weakly precompact.
 Also, $T^*$ is weakly precompact.

 Indeed, if $(y_n^*)\subset B_{F^*}$ is such that $\mu_n=S^*y_n^*\in (C(B_{X^*},E))^*=M(B_{X^*},E^*)$ is weakly Cauchy, then
 \[
 \int_{B_{X^*}} a\di \mu_n=S^*y_n^*=\int_{B_{X^*}} a\di G_{y_n^*}=T^*y_n^*(a),\quad a\in \inxe.
 \]
 Hence, the sequence $(T^*y_n^*)=(\mu_n|_{\inxe})$ is also weakly Cauchy.
 
 (m) It is enough to consider the strongly bounded extension $S\colon C(B_{X^*},E)\to F$ and use \cite[Corollary 10]{ghenciu-lewis}.  
\end{proof}

 We remark that $A(K,E)$ has the Dunford-Pettis property, provided $E$ is either $L_1$-space or $E$ has the Schur property, see \cite[Corollary 2.3]{saab-rocky}.
 However, there is an example constructed in \cite{tal-israel}  of a Banach space $E$ such that $E^*$ possesses the Schur property (hence $E$ has the Dunford-Pettis property) and  $C([0,1],E)$ does not have the Dunford-Pettis property.
 
\begin{cor}
    \label{schur}
    Let $E$ have the Schur property. Then the following assertions hold:
    \begin{enumerate}[(a)]
    \item Every strongly bounded operator $T\colon \inxe\to F$ is completely continuous.
   
    \item If $F$ does not contain $c_0$, then every operator $T\colon \inxe\to F$ is completely continuous. 
    \item If $T\colon \inxe\to F$ is an operator with $T^*$ weakly precompact, then $T$ is completely continuous.
    \item If $T\colon \inxe\to F$ is an operator, then $T$ is weakly completely continuous if and only if $T$ is completely continuous.
            \end{enumerate}
\end{cor}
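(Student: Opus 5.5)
The plan is to use the extension machinery of Theorem~\ref{t:ckeextense} together with the known $C(K,E)$-results for Schur spaces $E$. The key input is the classical fact that for $E$ with the Schur property, every strongly bounded operator $S\colon C(K,E)\to F$ is completely continuous. This is in \cite{ghenciu-lewis}; the idea is that a weakly null sequence $(f_n)$ in $C(K,E)$ is pointwise weakly null by Lemma~\ref{l:wuc}, hence pointwise norm null by the Schur property. Since $G$ has values in $L(E,F)$ and a control measure exists (Theorem~\ref{t:sbo}), $\int f_n\di G\to0$ follows by an Egorov/dominated-convergence argument.

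\textbf{Part (a).} Let $T$ be strongly bounded and let $S\colon C(B_{X^*},E)\to F$ be its extension from Theorem~\ref{t:extense}; $S$ is strongly bounded with representing measure $G$. By the fact above, $S$ is completely continuous, and so is its restriction $T$. If the citation is not available in exactly this form, I would prove it directly. Given a weakly null $(f_n)\subset B_{C(B_{X^*},E)}$, the functions $t\mapsto\norm{f_n(t)}$ tend to $0$ pointwise. I would take a control measure $\lambda$ from Theorem~\ref{t:controlmeasure} and apply Egorov to get a set $B$ with $\lambda(B_{X^*}\setminus B)$ small and $\norm{f_n}\to0$ uniformly on $B$. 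Fact~\ref{f:odhad} then bounds $\norm{Sf_n}\le\sup_B\norm{f_n}\,\norm{G}(B_{X^*})+\norm{G}(B_{X^*}\setminus B)$, which is small. The main subtlety is measurability of $t\mapsto\norm{f_n(t)}$, but it is continuous, so this is fine.

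\textbf{Parts (b)--(d).} For (b): if $F$ does not contain $c_0$, every operator into $F$ is unconditionally converging \cite{bessaga-pelczynski}, hence strongly bounded by Theorem~\ref{t:uncond-sbdd}, and (a) applies. For (c): if $T^*$ is weakly precompact, I expect $T$ to be unconditionally converging. Indeed, were it not, $T$ would fix a copy of $c_0$, so $T^*$ would map onto a quotient containing $\ell_1$'s unit vector basis behaviour, contradicting weak precompactness of $T^*$ via Rosenthal's $\ell_1$ theorem. The standard form is: $T$ not uc implies there is a wuC series $\sum x_n$ with $\norm{Tx_n}>\eta$, and choosing $y_n^*\in B_{F^*}$ norming $Tx_n$ one checks that $(T^*y_n^*)$ has no weakly Cauchy subsequence. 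Then strong boundedness and (a) finish the argument. This step is the one I expect to be the main obstacle; if it is awkward I would cite the known result that an operator with weakly precompact adjoint is unconditionally converging. For (d): completely continuous implies weakly completely continuous always. Conversely, weakly completely continuous implies unconditionally converging by \eqref{eq:implic}, hence strongly bounded, and (a) gives complete continuity.
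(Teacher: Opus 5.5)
Your proposal is correct and follows the paper's route. You extend $T$ to the strongly bounded operator $S$ on $C(B_{X^*},E)$ and use the Ghenciu--Lewis result for Schur spaces to get (a); you then reduce (b)--(d) to (a) by showing $T$ is unconditionally converging, hence strongly bounded, via Bessaga--Pe\l czy\'nski, the Bator--Lewis fact that $T^*$ weakly precompact implies $T$ unconditionally converging, and wcc $\Rightarrow$ uc. Your Egorov argument (control measure plus Fact~\ref{f:odhad}) and your $\ell_1$-coordinate argument for the Bator--Lewis step are valid self-contained substitutes for the two citations the paper uses; the Egorov argument could even be run directly on $T$ without passing to $S$.
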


\begin{proof}
    (a) Consider the strongly bounded extension $S\colon C(B_{X^*},E)\to F$ and the apply \cite[Corollary 11(i)]{ghenciu-lewis}.

        (b) If $F$ does not contain $c_0$, an operator $T\colon \inxe\to F$ is unconditionally converging by \cite[Theorem 5]{bessaga-pelczynski}, and thus strongly bounded. By (a), $T$ is completely continuous.

    (c) By \cite[Corollary 2]{bator-lewis}, $T$ is unconditionally converging, and thus strongly bounded. Hence (a) yields the conclusion.

    (d) If $T$ is weakly completely continuous, $T$ is strongly bounded. Hence (a) gives that $T$ is completely continuous. The converse is obvious.
\end{proof}

\begin{cor}
    \label{ell1-uc}
    If $E$ does not contain $\ell_1$, then every unconditionally converging $T\colon \inxe\to F$ is weakly precompact and $T^*$ is unconditionally converging.
\end{cor}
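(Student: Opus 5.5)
The plan is to follow the extension strategy used throughout this section. Let $T\colon \inxe\to F$ be unconditionally converging. By Theorem~\ref{t:uncond-sbdd}, $T$ is strongly bounded. Theorem~\ref{t:extense} then gives an extension $S\colon C(B_{X^*},E)\to F$ with representing measure $G$. By Theorem~\ref{t:ckeextense}(a), applied to the class of unconditionally converging operators, $S$ is again unconditionally converging. So the statement reduces to the corresponding known result for $C(K,E)$-spaces with $K=B_{X^*}$.

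For weak precompactness there are two routes. The quickest is to note that $T$ is strongly bounded and $E$ does not contain $\ell_1$, so Corollary~\ref{c:dusledkyake}(e) applies directly. Alternatively, one can cite the $C(K,E)$ statement from \cite{ghenciu-lewis}: if $E$ does not contain $\ell_1$, every unconditionally converging (equivalently, strongly bounded) operator $S$ on $C(K,E)$ is weakly precompact with $S^*$ unconditionally converging. In either case $T=S|_{\inxe}$ is weakly precompact, because this property passes to restrictions.

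For the dual operator, $T^*=R\circ S^*$, where $R\colon M(B_{X^*},E^*)\to (\inxe)^*$ is the restriction map $\mu\mapsto\mu|_{\inxe}$, the dual of the inclusion. This holds because $T^*y^*=G_{y^*}|_{\inxe}=(S^*y^*)|_{\inxe}$. If $S^*$ is unconditionally converging, then so is $R\circ S^*$, since composing with a bounded operator on the left preserves unconditional convergence of image series. Hence $T^*$ is unconditionally converging.

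The main obstacle is securing that $S^*$ is unconditionally converging. I would first look for this as an explicit statement in \cite{ghenciu-lewis} for strongly bounded operators on $C(K,E)$ with $E\not\supset\ell_1$. If it is not available in that form, I would derive it from weak precompactness of $S$ using the standard fact that $S$ weakly precompact implies $S^*$ unconditionally converging. The argument runs as follows. If $S^*$ fixed a copy of $c_0$, then $S^{**}$ would fix a copy of $\ell_\infty$, hence of $\ell_1$. By Rosenthal's $\ell_1$-theorem combined with Goldstine density, this would produce a bounded sequence in $C(B_{X^*},E)$ whose image under $S$ is equivalent to the $\ell_1$ basis, contradicting weak precompactness. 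Since $S$ is already weakly precompact by the previous step, this closes the argument.
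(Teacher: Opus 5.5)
Your main route is the paper's: $T$ is strongly bounded by Theorem~\ref{t:uncond-sbdd}, weakly precompact by Corollary~\ref{c:dusledkyake}(e), and the adjoint statement comes from \cite[Theorem 12]{ghenciu-lewis}. Your factorization $T^*=R\circ S^*$ through the restriction map correctly passes a $C(B_{X^*},E)$-statement back to $T$.

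The fallback you sketch for ``$S$ weakly precompact $\Rightarrow$ $S^*$ unconditionally converging'' does not work as written, although the fact itself is true. Two steps fail.
\begin{enumerate}
\item If $S^*$ is an isomorphism on a copy $Z$ of $c_0$, dualizing gives that $S^{***}$ fixes $Z^{\perp\perp}\cong\ell_\infty$, not $S^{**}$. For $\mathrm{id}_{\ell_1}$ the adjoint fixes $c_0$. Yet $\ell_1^{**}$ is an $L_1(\mu)$-space, hence weakly sequentially complete, so it contains no $\ell_\infty$.
\item A copy of $\ell_1$ in the bidual cannot be pulled back via Goldstine's theorem. The operator $\mathrm{id}_{c_0}$ is weakly precompact, while $(\mathrm{id}_{c_0})^{**}=\mathrm{id}_{\ell_\infty}$ fixes $\ell_1$.
\end{enumerate}

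A correct short argument avoids biduals. Suppose $S^*$ is not unconditionally converging. After blocking, take a weakly unconditionally Cauchy series $\sum y_n^*$ in $F^*$ with $\norm{S^*y_n^*}\ge\delta>0$, and let $B\colon F\to\ell_1$, $By=(y_n^*(y))_n$.
\begin{enumerate}
\item Then $(BS)^*e_n=S^*y_n^*$, so $(BS)^*|_{c_0}$ maps the weakly unconditionally Cauchy series $\sum e_n$ to a divergent series. Hence it is not unconditionally converging, and so not weakly compact.
\item By Gantmacher's theorem, $BS$ is not weakly compact.
\item Since $\ell_1$ is weakly sequentially complete, there is a bounded sequence $(f_n)$ for which $(BSf_n)$ has no weakly Cauchy subsequence. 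Therefore $(Sf_n)$ has none either, and $S$ is not weakly precompact.
\end{enumerate}
You can apply this directly to $T$, which is already weakly precompact, so neither $S^*$ nor $R$ is needed for the second assertion.

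Finally, the parenthetical ``unconditionally converging (equivalently, strongly bounded)'' is wrong. Take $K$ a singleton and $E=c_0$: then $\mathrm{id}_{c_0}$ is a strongly bounded operator on $C(K,E)=E$ that is not unconditionally converging. This does not affect your argument, which only uses the implication from unconditionally converging to strongly bounded.
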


\begin{proof}
The operator $T$ is strongly bounded, hence weakly precompact (see Corollary~\ref{c:dusledkyake}(e)) and has unconditionally converging adjoint (see \cite[Theorem 12]{ghenciu-lewis}).    
\end{proof}

\begin{cor}
\label{c:ell1-compl}
Let $E$ not contain a complemented copy of $\ell_1$. Then any operator $T\colon\inxe\to F$ has an unconditionally converging adjoint.    
\end{cor}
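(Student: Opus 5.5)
We argue by contradiction, following the strategy used throughout this section. Suppose $T\colon\inxe\to F$ is an operator whose adjoint $T^*\colon F^*\to(\inxe)^*$ is not unconditionally converging. The classical Bessaga--Pe\l czy\'nski characterization says that an operator fails to be unconditionally converging exactly when it fixes a copy of $c_0$. So there is a subspace $Z\subset F^*$ isomorphic to $c_0$ such that $T^*|_Z$ is an isomorphism onto its range. Hence $(\inxe)^*$ contains a copy of $c_0$. By the Bessaga--Pe\l czy\'nski theorem for dual spaces, a dual space containing $c_0$ contains $\ell_\infty$. It follows that $\inxe$ contains a complemented copy of $\ell_1$.

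The main task is to transfer this complemented $\ell_1$ from $\inxe$ to $E$. The key input is the known result (Saab--Saab for $C(K,E)$, and, in the same spirit, for injective tensor products of an $L_1$-predual) that an injective tensor product $X\wh{\otimes}_\ep E$ contains a complemented copy of $\ell_1$ only if $X$ or $E$ does.

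The $X$-side is excluded for two reasons. First, $X^*$ is an $L_1$-space, hence weakly sequentially complete, and therefore contains no copy of $c_0$. Second, $X$ has property (V) by Theorem~\ref{t:uncond-sbdd}, so $X$ cannot contain a complemented $\ell_1$: the projection onto such a copy followed by the isomorphism onto $\ell_1$ would be an unconditionally converging operator that is not weakly compact. Consequently the complemented copy of $\ell_1$ must come from $E$, contradicting the hypothesis.

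To keep the argument inside the paper's extension machinery, I would proceed differently if the Saab--Saab tensor result is only available for $C(K,E)$. Choose the projection $P\colon\inxe\to\ell_1$ onto the complemented copy. It is unconditionally converging, since $\ell_1$ contains no $c_0$. By Theorem~\ref{t:uncond-sbdd} and Theorem~\ref{t:ckeextense}, $P$ extends to an unconditionally converging, strongly bounded $S\colon C(B_{X^*},E)\to\ell_1$. Because $S$ agrees with $P$ on the embedded copy of $\ell_1$, $S$ maps onto $\ell_1$, and $C(B_{X^*},E)$ has a complemented copy of $\ell_1$. The Saab--Saab theorem for $C(K,E)$ then forces a complemented $\ell_1$ in $E$, since $C(K)$ has none.

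Alternatively, one can apply the $C(K,E)$ result of Ghenciu--Lewis directly. The adjoint $S^*$ of a strongly bounded $S$ on $C(K,E)$ with $E\not\supset$ complemented $\ell_1$ is unconditionally converging. Then $T^*=R\circ S^*$, with $R$ the restriction map from $M(B_{X^*},E^*)$ onto $(\inxe)^*$, is unconditionally converging.

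The main obstacle is that a general $T$ need not be strongly bounded, so it cannot be extended directly. The contradiction route handles this by extending only the auxiliary projection $P$, which is automatically unconditionally converging. Verifying carefully that the surjection $S$ onto $\ell_1$ yields a complemented copy is routine, since $\ell_1$ is projective and every surjection onto it splits.
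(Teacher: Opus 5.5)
Your argument through the retraction $P$ is correct, but it moves the obstruction from $\inxe$ to $E$ by a different mechanism than the paper. The paper stays on the dual side. The copy of $c_0$ fixed by $T^*$ lies in $(\inxe)^*=M_{\bnd,\ahom}(B_{X^*},E^*)$, which is isometrically identified with a closed subspace of $M(B_{X^*},E^*)=C(B_{X^*},E)^*$. The Saab--Saab argument ($c_0\subset M(K,E^*)$ forces $c_0\subset E^*$) together with Bessaga--Pe\l czy\'nski then gives a complemented $\ell_1$ in $E$. You instead pass back to a complemented $\ell_1$ in $\inxe$ and lift the complementation to $C(B_{X^*},E)$ by extending $P$, using Theorems~\ref{t:uncond-sbdd}, \ref{t:controlmeasure}(a) and \ref{t:extense}. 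If $J\colon\ell_1\to\inxe$ is the embedding with $PJ=\mathrm{id}$, then $SJ=\mathrm{id}$ as well, so $JS$ is already the projection and the lifting property of $\ell_1$ is not needed. The closing ingredient, Saab--Saab for $C(K,E)$, is the same in both proofs. The paper's route is leaner: it needs only the identification of $(\inxe)^*$. It uses neither strong boundedness (so not property (V) of $X$, which in the complex case required the Lusky--Pfitzner repair) nor the extension theorem. Your route showcases the paper's extension method, but even there unconditional convergence is dispensable. Every operator has a norm-preserving extension into $\ell_1^{**}$, and composing it with the canonical projection $\ell_1^{**}\to\ell_1$ (as $\ell_1=c_0^*$) would do. You should drop your other two arguments. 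For an $L_1$-predual $X$, the general tensor-product statement in your first route is, via Bessaga--Pe\l czy\'nski, equivalent to the corollary itself, so invoking it is circular. The Ghenciu--Lewis variant, as written, presupposes that $T$ is strongly bounded. It is valid only when applied to $P$: then $P^*$ has left inverse $J^*$, hence fixes $\ell_\infty$ and cannot be unconditionally converging.
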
 

\begin{proof}
Let $T\colon \inxe\to F$ be an operator whose adjoint $T^*$ is not unconditionally converging. By  \cite[Problem 8, p.\,54]{diestel-sequences}, there exists an isomorphic copy $G$ of $c_0\subset F^*$ such that $T^*|_G$ is an isomorphism. Let $L\colon c_0\to G$ be an isomorphism of $c_0$ onto $G$, then $c_0$ embeds into $(\inxe)^*=M_{\bnd,\ahom}(B_{X^*},E^*)$. Hence $c_0$ embeds into $M(B_{X^*},E^*)$, and thus $c_0$ embeds into $E^*$ by the proof of \cite[Theorem 1]{saab-pams}. Thus $E$ contains a complemented copy of $\ell_1$, see \cite[Theorem 4]{bessaga-pelczynski}.
\end{proof}

\section{Operators on $\inxe$, where $\ext B_{X^*}$ is scattered}
\label{s:scatt}

For spaces $C(L,E)$, where $L$ is a scattered compact space (we recall that a topological space $L$ is \emph{scattered} if every nonempty subset of $L$ has an isolated point),  it is possible to deduce the properties of $C(L,E)$ from the properties of $E$, see \cite{bombal-cembranos}, \cite{cembranos-bullaus} and \cite{kacena-jmaa}. To get such results, it is enough to have an operator with a discrete control measure (we recall that a measure $\lambda\in M^+(L)$ is \emph{discrete} if there exists a countable set $\{t_n\setsep n\in\en\}$ in $L$ and a summable sequence $(a_n)$ of positive numbers with $\mu=\sum_{n=1}^\infty a_n \ep_{t_n}$).
This case appears when the compact space $L$ is countable. Then the compact convex set $K=M^1(L)$ is a Bauer simplex with $\ext K=L$. Then $C(L,E)=A(K,E)$. Thus, it is natural to consider the case where our simplex $K$ has only countably many extreme points. We remark that in that case  $K$ is metrizable, see \cite[Theorem 10.56]{lmns}. 

In case $K$ is metrizable, $K$ is a \emph{standard} compact convex set, i.e., $\ext K$ is $\mu'$-measurable for any completion $\mu'$ of a measure $\mu\in M^1(K)$ and any maximal measure $\mu$ on $K$ is carried by $\ext K$ in the sense that $\mu'(K\setminus\ext K)=0$. We will formulate our results for an $L_1$-predual $X$, whose dual unit ball $B_{X^*}$ is a standard compact convex set and $\ext B_{X^*}$ is scattered.

Before we proceed to the positive results, we present a classical example showing that in a non metrizable case the fact that $\ext K$ is scattered  does not guarantee a discrete control measure in general.

\begin{example}
    \label{ex:dikobraz} Let $\ef=\er$.
For each compact space $L$ that is not scattered there exists a simplex $K$ such that:
\begin{enumerate}[(a)]
    \item $\ext K$ is scattered;
    \item there exists an operator $T\colon A(K)\to\er$ such that its representing measure does not have a discrete control measure;
    \item for each Banach space $E$, the space $C(L,E)$ is $1$-complemented in $A(K,E)$.
\end{enumerate}
\end{example}

\begin{proof}
Given a compact space $L$ that is not scattered, let 
$M=L\times\{-1,0,1\}$ with the porcupine topology (see \cite[Proposition I.4.15]{alfsen}). This means that the points of $L\times\{-1,1\}$ are isolated and basis of neighborhoods of $(t,0)\in L\times\{0\}$ are sets of the form
\[
(U\times\{-1,0,1\})\setminus \{(t,-1),(t,1)\},\quad U\subset L\text{ a neighborhood of }t.
\]
Then $M$ is a compact Hausdorff space.
Let
\[
\H=\{f\in C(M,\er)\setsep f(t,0)=\frac12(f(t,-1)+f(t,1)),t\in L\}.
\]
If $K$ is the \emph{state space} of $\H$, i.e., 
\[
K=\{\omega\in  \H^*\setsep \omega(1)=\norm{\omega}=1\},
\]
with the weak$^*$ topology, then it is well known (see \cite[Proposition I.4.15]{alfsen}) that $K$ is a simplex with $\ext K$ homeomorphic to $M\setminus (L\times\{0\})$ by the evaluation mapping $\phi\colon M\to K$. Hence $\ext K$ is scattered.

Since $L$ is not scattered, there exists a continuous  measure $\lambda\in M^1(L)$ (i.e, $\lambda(\{t\})=0$ for each $t\in L$). Then the mapping $T\colon A(K)\to \er$ defined as $Ta=\int_K a\di\phi(\lambda)$ has as the representing measure the maximal measure $\phi\lambda$ (see \cite[Lemma 14.2(1)]{kal-ron-spu-rend}). This measure satisfies $(\phi\lambda)(\ext K)=0$. Nevertheless, the measure $\phi\lambda$ does not have a discrete maximal control measure: such a measure is carried  by $\ext K$, and this is impossible for $\phi\lambda$.

If $E$ is a Banach space, the space $A(K,E)$ is isometric to 
\[
\H_E=\{f\in C(M,E)\setsep f(t,0)=\frac12(f(t,-1)+f(t,1)),t\in L\}
\]
via the mapping $J\colon A(K,E)\to \H_E$ given as $f\mapsto f\circ\phi$.
Indeed, any function $f\in A(K,E)$ is continuous on $\phi(M)$ and satisfies the condition $f(\phi(t,0))=\frac12(f(\phi(t,1)+f(\phi(t,-1)))$, $t\in L$. Thus $f\circ \phi\in \H_E$.

On the other hand, if $f\in \H_E$ is given, then $\wt{f}\colon M^1(M)\to E$ defined as $\wt{f}(\mu)=(B)\-\int_M f\di\mu$, $\mu\in M^1(M)$, is an affine continuous function from $M^1(M)$ to $E$. Let $\pi\colon M^1(M)\to K$ be the restriction mapping. Given $\mu_1,\mu_2\in M^1(M)$ with $\pi(\mu_1)=\pi(\mu_2)$, i.e., $\mu_1-\mu_2\in \H^\perp$, then we claim that $\wt{f}(\mu_1)=\wt{f}(\mu_2)$. Let $x^*\in E^*$ be arbitrary. Then $x^*\circ f\in \H$, and thus
\[
\int_M x^*\circ f\di\mu_1=\int_M x^*\circ f\di\mu_2.
\]
Since $x^*\in E^*$ is arbitrary, the equality $\wt{f}(\mu_1)=\wt{f}(\mu_2)$ follows. 
Thus, there is a  unique function $f'\in A(K,E)$ satisfying $\wt{f}=f'\circ \pi$. Then $f=f'\circ\phi$ and $J$ is a surjective isometry.

Further, $C(L,E)$ is isometric to
\[
\H_L=\{f\in C(M,E)\setsep f(t,0)=f(t,-1)=f(t,1)),t\in L\}.
\]
Then the mapping $P\colon \H_E\to \H_L$ defined as
\[
Pf(t,i)=f(t,0),\quad t\in L, i\in\{-1,0,1\},
\]
is a projection onto $\H_L$ of norm $1$.
\end{proof}

Before the proof of the main result section of this section, namely Theorem~\ref{t:discrete}, we need a lemma.

\begin{lemma}
    \label{l:discrete-jedna}
 Let $T\colon \inxe\to F$ be  a strongly bounded operator with the representing measure $G$ and a discrete control maximal measure $\lambda$.
 Consider a category $\I$ of operators between Banach spaces from the following list: compact, weakly compact, weakly precompact, completely continuous, pseudo weakly compact, weakly completely continuous, Right completely continuous, unconditionally converging.
 
Then, the following assertions are equivalent:
\begin{enumerate}[(i)]
    \item The operator $T$ belongs to $\I$.
    \item The operator $G(B)$ belongs to $\I$ for each $B\in\Borel(B_{X^*})$.
\end{enumerate}
 \end{lemma}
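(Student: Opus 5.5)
The implication (i)$\implies$(ii) is exactly Corollary~\ref{c:operbecka}, since each listed category is closed under norm limits. So the work lies in (ii)$\implies$(i). My plan is to show that $T$ is a norm limit of finite sums of operators that each lie in $\I$, and then to apply norm-closedness of $\I$ once more.

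Write $\lambda=\sum_{n=1}^\infty c_n\ep_{t_n}$ with $c_n>0$ summable, and put $A_N=\{t_n\setsep n\le N\}$. For $\ep>0$, choose $\delta$ as in Theorem~\ref{t:controlmeasure}(b), and then choose $N$ so large that $\lambda(B_{X^*}\setminus A_N)=\sum_{n>N}c_n<\delta$. This gives $\norm{G}(B_{X^*}\setminus A_N)<\ep$. By Theorem~\ref{t:extense}, for $a\in\inxe$ we have
\[
Ta=\int_{B_{X^*}}a\di G=\sum_{n=1}^N G(\{t_n\})a(t_n)+\int_{B_{X^*}\setminus A_N}a\di G .
\]
Here $a(t_n)$ makes sense through the identification $\inxe=A_{\hom}(B_{X^*},E)$ of Fact~\ref{f:predual-injective}, and I may assume the $t_n$ are distinct. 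Fact~\ref{f:odhad} bounds the last term by $\norm{a}\norm{G}(B_{X^*}\setminus A_N)\le\ep\norm{a}$. It follows that $\norm{T-T_N}\le\ep$, where
\[
T_N a=\sum_{n=1}^N G(\{t_n\})\,\delta_{t_n}(a)
\]
and $\delta_{t_n}\colon\inxe\to E$ is evaluation at $t_n$. Each $\delta_{t_n}$ is a bounded operator of norm at most $1$.

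Each summand $G(\{t_n\})\circ\delta_{t_n}$ is the composition of a bounded operator with $G(\{t_n\})$, and $G(\{t_n\})\in\I$ by (ii). So I need two further facts. First, each listed class is a left ideal under composition with bounded operators on the right: $R\circ V\in\I$ whenever $R\in\I$ and $V$ is bounded. Second, each class is closed under finite sums. The first fact is routine for all eight classes, because a bounded $V$ maps bounded sets to bounded sets, weakly null or weakly Cauchy sequences to sequences of the same kind, Right null or Right Cauchy sequences to sequences of the same kind (it is continuous for the Right topologies), and weakly unconditionally Cauchy series to weakly unconditionally Cauchy series. Closedness under sums is clear for all classes except weakly precompact, weakly compact and compact, where I would pass to a common subsequence; that handles the finitely many summands. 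Once both facts are in place, $T_N\in\I$, and letting $\ep\to0$ gives $T\in\I$ by norm-closedness (Corollary~\ref{c:operbecka}).

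The main obstacle I expect is rigor in the decomposition step. I need the integral of $a$ over the atom $\{t_n\}$ to equal $G(\{t_n\})a(t_n)$, and I need the tail to be controlled by the semivariation. The first follows because on a singleton the function $a$ is constant equal to $a(t_n)$, so $\int_{\{t_n\}}a\di G=G(\{t_n\})a(t_n)$ directly from the definition of the integral on simple functions. The second is Fact~\ref{f:odhad}. One more point needs checking: $G$ takes values in $L(E,F)$ by Theorem~\ref{t:controlmeasure}(a), so $T_N$ really maps into $F$.
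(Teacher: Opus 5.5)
Your proposal is correct and follows essentially the same route as the paper: you approximate $T$ in norm by the finite sums $\sum_{n\le N}G(\{t_n\})a(t_n)$, bound the remainder by the semivariation using the discrete control measure, and conclude from the norm-closedness of $\I$ established in Corollary~\ref{c:operbecka}. The differences are cosmetic. You split off $B_{X^*}\setminus A_N$ directly instead of first noting that each $G_{y^*}$ is carried by $\{t_n\setsep n\in\en\}$. You also spell out the right-ideal and finite-sum stability of the listed classes, including the Right-to-Right continuity of bounded operators, which the paper only calls easy.
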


 \begin{proof}
(i)$\implies$(ii) follows from Corollary~\ref{c:operbecka}.

(ii)$\implies$(i) Let $\lambda=\sum_{n=1}^\infty a_n\ep_{t_n}$, where $H=\{t_n\setsep n\in\en\}\subset \ext B_{X^*}$ and $(a_n)$ is a summable sequence of positive numbers. We observe that each measure $G_{y^*}$ is carried by $H$.
For each $n\in\en$ we consider the operator $T_n\colon \inxe\to F$ defined as
\[
T_na=G(\{t_n\})a(t_n),\quad a\in \inxe =A_{\hom} (B_{X^*},E).
\]
Then it is easy to see that each $T_n$ belongs to $\I$. 
(The only categories that deserve a closer attention are the ones of pseudo weakly compact and Right completely continuous operators. But if $(a_n)\subset \inxe$ is Right convergent (respectively, Right Cauchy), then $(a_n(t))$ is Right convergent (respectively, Right Cauchy) in $E$ for each $t\in B_{X^*}$.)

Thus, the partial sums $S_m=\sum_{n=1}^m T_n\in\I$ for each $m\in\en$. To conclude the proof it is enough to show that $\norm{T-S_m}\to 0$. So, let $\ep>0$ be given. We find $\delta>0$ such that $\norm{G}(B)<\ep$ whenever $B\in\Borel(B_{X^*})$ satisfies $\lambda(B)<\delta$. Let $m_0\in \en$ be such that $\sum_{n=m_0}^\infty a_n<\delta$ and fix arbitrary $a\in B_{\inxe}$ and $y^*\in B_{F^*}$. Then 
for $m\ge m_0$ we have
\[
\lambda(H\setminus\{t_1,\dots, t_m\})=\sum_{n=m+1}^\infty a_n<\delta,
\]
and thus $\norm{G}(H\setminus\{t_1,\dots, t_m\})<\ep$.
Since $G(B)^*y^*=G_{y^*}(B)$ for each $B\in\Borel(B_{X^*})$, we get
\[
\begin{aligned}
 &\abs{y^*(Ta)-y^*(S_ma)}=\abs{(T^*y^*)(a)-y^*(\sum_{n=1}^m G({t_n})a(t_n))}\\
 &=
 \abs{\int_{B_{X^*}} a\di G_{y^*}-\sum_{n=1}^m \left(G(\{t_n\})^*y^*\right)(a(t_n))}\\
 &\le\abs{\int_{\{t_1,\dots, t_m\}} a\di G_{y^*}-\sum_{n=1}^m \left(G(\{t_n\})^*y^*\right)(a(t_n))}+\abs{\int_{H\setminus \{t_1,\dots, t_m\}} a\di G_{y^*}}\\
 &=\abs{\sum_{n=1}^m G_{y^*}(\{t_n\})a(t_n)-\sum_{n=1}^m \left(G(\{t_n\})^*y^*\right)(a(t_n))}
 +\norm{a}\cdot \norm{G}(H\setminus \{t_1,\dots,t_m\})\\
 &\le 0+1\cdot \ep.
\end{aligned}
\]
Hence 
\[
\norm{T-S_m}=\sup_{y^*\in B_{F^*}}\sup_{a\in B_{\inxe}}\abs{y^*(Ta)-y^*(S_ma)}\le \ep,\quad m\ge m_0.
\]
Since the category $\I$ is closed with respect to the operator norm, the conclusion follows.
 \end{proof}

\begin{thm}
    \label{t:discrete}
    Let $X$ be an $L_1$-predual such that $B_{X^*}$ is a standard compact convex set and $\ext B_{X^*}$ is scattered (in particular, if $\ext B_{X^*}$ is countable or compact scattered).
    Consider a property $\P$ from the following list: property (V), property semi-(V), Dunford-Pettis property, reciprocal Dunford-Pettis property, Dieudonné property, Right Dieudonné property, to be sequentially Right.
    
    Then $\inxe$ has $\P$ if and only if $E$ has $\P$. 
\end{thm}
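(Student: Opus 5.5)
Both directions reduce to the structure of operators on $\inxe$ with a discrete control measure, combined with Lemma~\ref{l:discrete-jedna}.

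\emph{$E$ inherits $\P$ from $\inxe$.} I would show that $E$ is $1$-complemented in $\inxe$. Fix $t_0\in\ext B_{X^*}$ and $z\in S_X$ with $t_0(z)=1$; such a pair exists after a small perturbation, or else one uses a $1$-complemented copy of $\ef$ in $X$, since $X$ is an $L_1$-predual. Then $e\mapsto z\otimes e$ is an isometric embedding of $E$, and $a\mapsto a(t_0)$ (evaluation of $A_{\hom}(B_{X^*},E)$ at $t_0$) is a norm-one left inverse. All the listed properties pass to complemented subspaces. An operator $U$ on $E$ in the relevant class composes with the projection to give an operator on $\inxe$ in the same class. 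The target property then pulls back, because the restriction of that composite to $E$ is $U$.

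\emph{$\inxe$ inherits $\P$ from $E$.} Take an operator $T\colon\inxe\to F$ in the "weak" class for $\P$: unconditionally converging for (V) and semi-(V), weakly completely continuous for (D), completely continuous for (RDP), Right completely continuous for (RD), pseudo weakly compact for (SR), and weakly compact for (DPP). In every case $T$ is unconditionally converging by \eqref{eq:implic}, so it is strongly bounded by Theorem~\ref{t:uncond-sbdd}. By Theorem~\ref{t:controlmeasure}, $T$ has a maximal invariant control measure $\lambda$, and $G$ takes values in $L(E,F)$. The key step is to show that $\lambda$ is discrete.
\begin{itemize}
\item Because $B_{X^*}$ is standard and $\lambda$ is maximal, the completion of $\lambda$ is carried by $\ext B_{X^*}$.
\item A Radon measure carried by a scattered set is purely atomic. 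Its restriction to the scattered space $\ext B_{X^*}$ is a regular measure there, and a scattered space admits no nonzero continuous (atomless) Radon measure, since every compact subset of it is scattered.
\end{itemize}
Hence $\lambda=\sum a_n\ep_{t_n}$ with $t_n\in\ext B_{X^*}$. I expect this measure-theoretic step to be the main obstacle, because completion-measurability and inner regularity on a non-closed set need care.

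With discreteness established, Lemma~\ref{l:discrete-jedna} says $T\in\I$ if and only if every $G(B)\in\I$. Moreover $G(B)=\sum_n G(B\cap\{t_n\})$, with the sum converging in norm by the control estimate. So it is enough to handle the operators $G(\{t_n\})\colon E\to F$. By Corollary~\ref{c:operbecka}, each $G(\{t_n\})$ lies in the weak class, and $E$ having $\P$ puts it in the strong class. The strong class is closed under norm limits, so every $G(B)$ is in it, and Lemma~\ref{l:discrete-jedna} gives that $T$ is in the strong class. This proves $\P$ for $\inxe$.

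The one exception is property (u)-type issues for semi-(V), where "weakly completely continuous" is handled in the same way. For (DPP), note that a weakly compact operator is unconditionally converging, so the same argument applies with (wc) and (cc).
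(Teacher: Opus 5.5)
Your proposal is correct and follows essentially the same route as the paper. For necessity, $E$ is $1$-complemented in $\inxe$. For sufficiency, you go from uc to strongly bounded (Theorem~\ref{t:uncond-sbdd}), take the maximal control measure, observe it is discrete because standardness puts it on the scattered set $\ext B_{X^*}$, and conclude with Corollary~\ref{c:operbecka} and Lemma~\ref{l:discrete-jedna}. Two steps can be simplified. First, $t_0$ need not be extreme: any Hahn--Banach norming functional for $z$ works, as in the paper, and if you do want extremality, take an extreme point of the weak$^*$-compact face $\{t\in B_{X^*}\setsep t(z)=1\}$. Second, the atom-by-atom decomposition of $G(B)$ is unnecessary, since Corollary~\ref{c:operbecka} already applies to every Borel set $B$.
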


\begin{proof}
Obviously, $E$ has the property $\P$ is a necessary condition because $E$ has a $1$-complemented copy in $\inxe$.

Indeed, we find $x_0\in S_X$ and $x_0^*\in S_{X^*}$ such that $x_0^*(x_0)=1$. Let $J\colon E\to \inxe$ be given as $Je=x_0\otimes e$, $e\in E$. Then $J$ is an isometric embedding and an operator $P\colon \inxe=A_{\hom}(B_{X^*},E)\to \Rng J$ defined by
\[
Pf=x_0\otimes f(x_0^*),\quad f\in A_{\hom}(B_{X^*},E),
\]
is a projection onto $\Rng J$ of norm $1$.

For the proof of sufficiency, we use Lemma~\ref{l:discrete-jedna} and Corollary~\ref{c:operbecka}.  So, let us assume that $E$ has the property $\P$.

We present the proof scheme for the case where $\P$ is the property (V). Let $T\colon\inxe\to F$ be unconditionally converging. Then it is strongly bounded with the representing measure $G$ whose control measure $\lambda$ is maximal.  

We know that $\ext B_{X^*}$ is universally measurable. Further, any maximal measure is carried by $\ext B_{X^*}$. Since this set is scattered, the measure $\lambda$ is discrete (see \cite[Exercise 439X(h)]{fremlin4}). Now, Corollary~\ref{c:operbecka} gives that $G(B)\colon E\to F$ is unconditionally converging for each $B\in \Borel(B_{X^*})$. By the assumption, every $G(B)$ is weakly compact. Lemma~\ref{l:discrete-jedna} yields $T$ is weakly compact. Hence $\inxe$ has the property (V)

The remaining cases are analogous.
\end{proof}

\section*{Acknowledgement}

We are grateful to Professor Jakub Rondoš for several comments which helped us to improve the presentation of the paper.

\section*{Declarations}

The authors have no relevant financial or non-financial interests to disclose.
No funding was received to assist with the preparation of this manuscript.
On behalf of all authors, the corresponding author states that there is no conflict of interest. No data were generated or analyzed in this study.

\bibliographystyle{acm}
\bibliography{affine}

\end{document}